\def\qed{\hfill $\vcenter{\hrule height .3mm
		\hbox {\vrule width .3mm height 2.1mm \kern 2mm \vrule width .3mm
			height 2.1mm} \hrule height .3mm}$ \bigskip}
\def \Sph{\mathbb{S}^{n-1}}
\def \RR {\mathbb R}
\def \EE {\mathbb E}
\def \PP {\mathbb P}
\def \c\EE {\mathcal \EE}
\def \R {\mathbb{R}}
\def \\EEPS {\delta}
\def \Ric {\operatorname{Ric}}
\def \Riem {\operatorname{Riem}}
\def \para {//}
\def \gradsph {\nabla_S}
\def \Hesssph{\nabla_S^2}
\newtheorem{theorem}{Theorem}
\newtheorem{lemma}{Lemma}
\newtheorem{conjecture}{Conjecture}
\newtheorem{proposition}{Proposition}
\theoremstyle{definition}
\newtheorem{definition}[theorem]{Definition}
\theoremstyle{remark}
\newtheorem{remark}{Remark}
\long\def\symbolfootnotetext[#1]#2{\begingroup
	\def\thefootnote{\fnsymbol{footnote}}\footnotetext[#1]{#2}\endgroup}
\title{Transportation onto log-Lipschitz perturbations}
\date{}
\author{Max Fathi, Dan Mikulincer, and Yair Shenfeld}
\author{Max Fathi\thanks{Universit\'e Paris Cit\'e and Sorbonne Universit\'e, CNRS, Laboratoire Jacques Louis Lions \&
		Laboratoire de Probabilit\'es Statistique et Mod\'elisation, F-75013 Paris, France. \\
		Ecole Normale Sup\'erieure, Universit\'e PSL, D\'epartement de Math\'ematiques et applications, F-75005, Paris,
		France\\
		Institut Universitaire de France (IUF). Email address: \texttt{mfathi@lpsm.paris}}
	~~and Dan Mikulincer\thanks{Department of Mathematics, Massachusetts Institute of Technology, Cambridge, MA, USA. Email address: \texttt{danmiku@mit.edu}}
	~~and Yair Shenfeld\thanks{Division of Applied Mathematics, Brown University, Providence, RI, USA. Email address: \texttt{Yair\_Shenfeld@Brown.edu}} }
\begin{document}
	
	\maketitle
	
	\begin{abstract}
		We establish sufficient conditions for the existence of globally Lipschitz transport maps between probability measures and their log-Lipschitz perturbations, with dimension-free bounds. Our results include Gaussian measures on Euclidean spaces and uniform measures on spheres as source measures. More generally, we prove results for source measures on manifolds satisfying strong curvature assumptions. These seem to be the first examples of dimension-free Lipschitz transport maps in non-Euclidean settings, which are moreover sharp on the sphere.
		We also present some applications to functional inequalities, including a new dimension-free Gaussian isoperimetric inequality for log-Lipschitz perturbations of the standard Gaussian measure. Our proofs are based on the Langevin flow construction of transport maps of Kim and Milman.
	\end{abstract}
	
	\section{Introduction} 
	
	This work concerns the problem of transporting a probability measure $d\mu=e^{-V}d\operatorname{Vol}$ on a Riemannian manifold $(M,g)$  onto a measure $d\nu=e^{-(V+W)}d\operatorname{Vol}$ which is an $L$-log-Lipschitz perturbation of $\mu$, i.e., $|\nabla W|\le L$. We will show that the \emph{Langevin transport map} (also known as the heat flow transport map of Kim and Milman) between $\mu$ and $\nu$ is Lipschitz in various Euclidean and manifold settings. This construction of transport maps was introduced in \cite{KiMi12}. Our main results are Theorem \ref{thm:Euclidean} in the Euclidean setting and Theorem \ref{thm:manifold_intro} in the Riemannian setting (with sharp results in the special case of the sphere, Theorem \ref{thm_sphere}). We shall also discuss applications to functional inequalities and optimal transport.
	
	The question of proving Lipschitz bounds for transport maps goes back to the seminal work of Caffarelli \cite{caffarelli2000monotonicity}, who proved that the quadratic optimal transport map (or Brenier map) from a standard Gaussian measure onto a uniformly log-concave measure on $\R^n$ is globally Lipschitz, with a dimension-free bound (see \cite{FGP, CP} for alternative proofs based on entropic optimal transport and \cite{kolesnikov_sobolev} for Sobolev bounds). The existence of such maps allows to transfer various functional, isoperimetric, and concentration inequalities from the source measure to the target measure \cite{cordero2002some, caffarelli2000monotonicity, Har99}. For example, Caffarelli's result immediately recovers the classical results of Bakry and \'Emery on sharp functional inequalities for uniformly log-concave measures. Moreover, E. Milman \cite{milman2018spectral} showed that such Lipschitz estimates imply bounds on higher eigenvalues of certain differential operators, for which no non-transport proofs are known at this time. What is crucial for many applications, such as the correlation inequalities \cite{Har99} and quantitative central limit theorems \cite{mikulincer_clt}, is to ensure that the Lipschitz estimates are dimension-free.

	Several extensions of Caffarelli's theorem on the Brenier maps have been proven since then, such as \cite{colombo_lipschitz}, which showed that the optimal transport map from a Gaussian measure onto any log-compactly-supported perturbation is globally Lipschitz. Beyond these extensions not much is known about the Lipschitz properties of the optimal transport map, and there are many remaining questions \cite{kolesnikov2011mass}. However, for most applications, it is not particularly important that the map is optimal and any globally Lipschitz map will suffice. An emerging line of research has focused on the construction and analysis of Lipschitz transport maps beyond the setting of optimal transport. Our starting point is the paper \cite{KiMi12}, in which Y. Kim and E. Milman introduced a new construction of transport maps and recovered, as well as extended, Caffarelli's result. The construction is based on a time reversal of an overdamped Langevin (or drift-diffusion) SDE, and we shall call it the Langevin transport map in the sequel. As shown by Tanana \cite{tanana_comparison}, this construction does not coincide with the Brenier map in general (although they do coincide in dimension one), see also \cite{LaSa22}. It is on this construction that the present work is based.
	
	More recently, there have been several works investigating Lipschitz properties of the Langevin transport map in the Euclidean setting when the target measure satisfies certain convexity conditions \cite{MS21, neeman2022lipschitz, klartag_Putterman, shenfeld2022}. Our focus here is on a different class of target measures, with a first-order condition on the target rather than a second-order condition (but still using strong convexity assumptions on the source measure). As a general motivation for relaxing regularity assumptions, we mention that reversing diffusion processes to construct transport maps has recently gained a lot of traction in the machine learning community \cite{HJA20,BTHD21}, where such constructions are used to generate samples from unknown distributions. 
	
	The study of the Langevin transport map led to new results that were previously unattainable for the optimal transport map. However, all results mentioned above are limited to measures on Euclidean spaces, while the question of Lipschitz transport maps is also interesting for non-Euclidean geometries. Indeed, for Riemannian manifolds there are some motivating questions of E. Milman \cite{milman2018spectral} and of Beck and Jerison \cite{beck_friedland}. In light of this, an additional goal in this work is to establish sufficient conditions on weighted Riemmnanian manifolds to ensure the existence of Lipschitz transport maps. While the existence of a Lipschitz optimal transport map can be established in some manifold settings \cite[Theorem 3.1]{MR2648684}, the bounds are dimension-dependent and the necessary assumptions are highly restrictive. In contrast, under the first-order condition described above, our results yield, for the first time, a construction of globally Lipschitz transport maps with explicit dimension-free bounds in a general manifold setting. 
	
	\subsection*{Main results}
	As explained above, our results concern Lipschitz estimates for the Langevin transport map between a source measure (on $\R^n$ or on a Riemannian manifold) and a target measure on the same space, whose density with respect to the original measure is log-Lipschitz.
	\paragraph{Euclidean spaces:} Our main result in the Euclidean setting, which shall be proved in Section \ref{sec:Euclidean}, is
	
	\begin{theorem}
		\label{thm:Euclidean}
		Let $\kappa>0$ and suppose that $V:\R^n\to \R$ is a thrice-differentiable function such that
		\[
		\nabla^2 V(x) 	\succeq \kappa \mathrm{I}\quad \forall ~x\in \R^n \quad\text{and} \quad |\nabla^3V(x)(u,u)|\le K \quad\forall ~u\in \Sph.
		\]
		Let $d\mu=e^{-V}dx$ and $d\nu=e^{-(V+W)}dx$ with \[
		|\nabla W(x)|\le L \quad \forall~ x\in \R^n.
		\]
		Then, the Langevin transport map between $\mu$ and $\nu$ is Lipschitz with constant 
		$\exp\left(10\left[\frac{L}{\sqrt{\kappa}}+\frac{L^2}{\kappa}+\frac{LK}{\kappa^2}\right]\right)$.
	\end{theorem}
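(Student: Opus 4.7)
My plan is to use the Kim--Milman Langevin heat-flow construction. With $f = d\nu/d\mu \propto e^{-W}$ and $f_t = P_t f$, where $P_t$ is the Langevin semigroup of $\mu$ (generator $\mathcal L = \Delta - \langle \nabla V, \nabla\cdot\rangle$), the curve of measures $(P_{T-t}f)\,\mu$ interpolates, for large cutoff $T$, from the near-equilibrium state $(P_T f)\,\mu \to \mu$ at time $0$ to $\nu$ at time $T$, satisfying a continuity equation with velocity field $\nabla\log f_{T-t}$. Taking $T\to\infty$ yields a flow $\Psi$ pushing $\mu$ forward to $\nu$; its spatial Jacobian satisfies $\dot J_t = \nabla^2\log f_{T-t}(\Psi_t)\,J_t$, and Gr\"onwall applied to the top eigenvalue of $J_t^\top J_t$ gives
\[
\|DT(x)\|_{\mathrm{op}} \le \exp\!\Bigl(\int_0^\infty \sup_y \lambda_{\max}\bigl(\nabla^2\log f_t(y)\bigr)\,dt\Bigr).
\]
The problem therefore reduces to a time-integrated upper bound on the Hessian of $\log f_t$.

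I would split $\nabla^2\log f_t = \nabla^2 f_t/f_t - \nabla\log f_t \otimes \nabla\log f_t$ and treat the two summands separately. The rank-one piece is controlled by the Bakry--\'Emery gradient commutation: under $\nabla^2 V\succeq \kappa\,\mathrm{I}$, one has $|\nabla P_t f|\le e^{-\kappa t}P_t|\nabla f|$, and combined with $|\nabla f|\le L f$ from the log-Lipschitz hypothesis this yields $|\nabla\log f_t|\le L e^{-\kappa t}$ pointwise. The time integral of $|\nabla\log f_t|^2$ equals $L^2/(2\kappa)$, accounting for the $L^2/\kappa$ term in the exponent of the claimed Lipschitz constant.

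The heart of the argument is controlling $\nabla^2 f_t/f_t$, and this is where the bound $|\nabla^3 V|\le K$ enters. I would derive a second-order commutation estimate in two regimes. For short times $t\le 1/\kappa$, a Bismut--Elworthy--Li-type formula adapted to strongly convex $V$ with bounded $\nabla^3 V$ yields the parabolic-regularization bound $\|\nabla^2\log f_t\|_{\mathrm{op},\infty}\lesssim L/\sqrt t$; integrating over $[0,1/\kappa]$ contributes the $L/\sqrt\kappa$ term. For long times $t\ge 1/\kappa$, I would work directly with $\phi_t = \log f_t$, which satisfies the viscous Hamilton--Jacobi equation $\partial_t\phi_t = \mathcal L\phi_t + |\nabla\phi_t|^2$. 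Differentiating this equation twice in space and applying the maximum principle to the spatial supremum $\alpha(t) = \sup_x\|\nabla^2\phi_t(x)\|_{\mathrm{op}}$ produces a scalar differential inequality whose stabilizing drift $-2\kappa\alpha$ comes from $\nabla^2 V\succeq\kappa\,\mathrm{I}$, whose forcing $KL e^{-\kappa t}$ comes from coupling $\nabla^3 V$ with $\nabla\phi_t$, and which also contains a nonlinear $2\alpha^2$ term from the Hamilton--Jacobi nonlinearity. Using the short-time bound as initial data, solving this inequality yields $\alpha(t)\lesssim (LK/\kappa)\,e^{-\kappa t}$ for $t\ge 1/\kappa$, whose time integral is $LK/\kappa^2$. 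Summing the three contributions in the exponent gives the stated Lipschitz constant. The main obstacle is closing the long-time Hessian estimate against the destabilizing $2\alpha^2$ term, which requires a bootstrap making essential use of both the short-time Bismut input and the strong dissipative drift from $\nabla^2 V \succeq \kappa\,\mathrm{I}$, and careful tracking of constants so that the final exponent depends only on the three dimensionless ratios $L/\sqrt\kappa$, $L^2/\kappa$, and $LK/\kappa^2$.
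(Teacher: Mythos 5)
Your overall framework (Langevin/Kim--Milman flow, reduction to a time-integrated upper bound on $\nabla^2\log P_t e^{-W}$, Gr\"onwall on the Jacobian) matches the paper exactly. But the core technical step is where your route diverges, and it has a genuine gap. For the long-time regime you propose to differentiate the viscous Hamilton--Jacobi equation for $\phi_t=\log P_te^{-W}$ twice and run a maximum principle on $\alpha(t)=\sup_x\lambda_{\max}(\nabla^2\phi_t)$, arriving at $\dot\alpha\le -2\kappa\alpha+KLe^{-\kappa t}+2\alpha^2$. The $+2\alpha^2$ term (coming from $\nabla^2|\nabla\phi_t|^2\succeq 2(\nabla^2\phi_t)^2$) is destabilizing precisely for the \emph{upper} bound you need, and the comparison ODE blows up in finite time whenever $\alpha$ ever exceeds $\kappa$. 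Your own short-time input gives $\alpha(1/\kappa)$ of order $L\sqrt\kappa+L^2+LK/\kappa$, which exceeds $\kappa$ as soon as $L\gtrsim\sqrt\kappa$ --- and the theorem imposes no smallness on $L/\sqrt\kappa$ or $LK/\kappa^2$. So the bootstrap you flag as ``the main obstacle'' is not a technicality: it cannot be closed in the stated generality. (This sign issue is exactly why HJ/maximum-principle arguments work well for \emph{lower} Hessian bounds, i.e.\ for semi-log-concave targets, but not here.) The paper instead applies the Elworthy--Li/Bismut representation of $\nabla^2P_tf$ for \emph{all} $t$, bounding the martingale term by truncating $|M_t|$ at a level $\delta\sim L\cdot\frac{1-e^{-2\kappa t}}{\kappa}$ chosen to cancel an $e^{cL^2}$ factor produced by a reverse H\"older inequality for log-Lipschitz functions under $\mathrm{CD}(\kappa,\infty)$, together with BDG and a sub-Gaussian martingale tail; the $\nabla^3V$ term is handled by the second-variation estimate $|\nabla^2_{v,v}X_t^x|\le Kte^{-\kappa t}$. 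No long-time PDE argument is needed.

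A secondary error in your bookkeeping: you attribute the $L^2/\kappa$ term in the exponent to the rank-one piece $-\nabla\log f_t\otimes\nabla\log f_t$. That piece is negative semidefinite, so it contributes nothing to (and is simply dropped from) the upper bound on $\nabla^2\log f_t$. The sharpness example in the paper ($M\ge e^{L^2/2}$ for $W=L|x|$ on $\R$) shows the $L^2/\kappa$ term must genuinely come from $\nabla^2 f_t/f_t$ itself; in the paper it arises as the $5L^2e^{-\kappa t}$ contribution of the truncation level $\delta$ in the martingale estimate. Your claimed short-time bound $\lesssim L/\sqrt t$ and long-time bound $\lesssim (LK/\kappa)e^{-\kappa t}$ both omit this necessary $L^2e^{-\kappa t}$ contribution, so even setting aside the blow-up issue, the decomposition cannot yield the stated constant.
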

	
	\begin{remark}
		The log-Lipschitz assumption implies similar results for measures $\nu$ whose relative density with respect to $\mu$ is Lipschitz and strictly  bounded from below. Indeed, if  $|\nabla e^{-W}|\le C$ and $e^{-W}>c>0$, then $|\nabla W|\le L:=\frac{C}{c}$. 
	\end{remark}
	The dependence on $L$ in Theorem \ref{thm:Euclidean} is sharp as can be evidenced by the standard Gaussian. Indeed, if $\mu$ is the standard Gaussian measure on $\R$, then $\kappa=1$ and $K=0$. We now choose $W(x)=L|x|+\log Z$, where $Z$ is a normalizing constant, and suppose that there is an $M$-Lipschitz transport map from $\mu$ to $\nu$. Then, as in \cite{cordero2002some, kolesnikov2011mass}, the Gaussian isoperimetric inequality can be transported to $\nu$, up to a factor of $\frac{1}{M}$. More precisely, given a set $A\subset \R$ satisfying $\nu(A)=\frac{1}{2}$, we have that $\nu^+(A)\ge \frac{1}{\sqrt{2\pi}M}$ where $\nu^+(A):=\lim_{\varepsilon\downarrow 0}\frac{\nu(A^{\varepsilon})-\nu(A)}{\varepsilon}$ with $A^{\varepsilon}$ standing for the $\varepsilon$-neighborhood of $A$. We now take $A=[0,\infty)$ and note that $\nu(A)=\frac{1}{2}$, by symmetry, and that $\nu^+(A)=\frac{d\nu}{dx}(0)$. We have
	\[
	Z=\int e^{L|x|}d\mu \ge \int e^{-Lx}d\mu=e^{\frac{L^2}{2}}
	\]
	so 
	\[
	\frac{d\nu}{dx}(0) = e^{-W(0)}\frac{d\mu}{dx}(0) =\frac{1}{Z}\frac{1}{\sqrt{2\pi}} \leq \frac{e^{-\frac{L^2}{2}}}{\sqrt{2\pi}}.
	\]
	It follows that $M\ge e^{\frac{L^2}{2}}$. 
	
	\paragraph{Riemannian manifolds:} We now discuss the manifold setting in which our first result applies to the uniform measure on the round sphere $\Sph$, with radius scaled as $\sqrt{n-2}$. In this case, we obtain a sharp estimate, similar to Theorem \ref{thm:Euclidean}.
	\begin{theorem} \label{thm_sphere}
		Let $n \geq 3$. Then, the Langevin transport map from the uniform measure on $\Sph(\sqrt{n-2})$ onto a measure with $L$-log-Lipschitz density is Lipschitz with constant $\exp\left(35(L+L^2)\right)$.
	\end{theorem}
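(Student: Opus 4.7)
The plan is to prove Theorem \ref{thm_sphere} along the same lines as Theorem \ref{thm:Euclidean}, but working intrinsically on $\Sph$. The sphere's Ricci lower bound $(n-2)g$ plays the role of the uniform convexity parameter $\kappa$ (via the curvature--dimension condition $CD(n-2,\infty)$), while the parameter $K$ controlling $\nabla^3 V$ vanishes since $V$ is constant. Thus one only expects the two terms $L/\sqrt{n-2}$ and $L^2/(n-2)$ in the exponent of the Lipschitz constant.

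Let $h=d\nu/d\mu$, so that $|\nabla\log h|\le L$, and let $h_t:=P_t h$, where $P_t=e^{t\Delta_{\Sph}}$ is the spherical heat semigroup. The Langevin transport map $T:\mu\to\nu$ is then the long-time limit of the backward flow of the time-dependent velocity field $v_t=-\nabla\log h_t$, and the differential $dT$ applied to a tangent vector is controlled by a linearized covariant ODE along this flow whose coefficient is $-\nabla^2\log h_t$. A Gr\"onwall-type argument then gives $\|dT\|\le \exp\bigl(\int_0^\infty\|\nabla^2\log h_t\|_{\mathrm{op}}\,dt\bigr)$, up to lower-order corrections from parallel transport that ought to be absorbed into constants.

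The central task is therefore to bound $\|\nabla^2\log h_t\|_{\mathrm{op}}$ in a dimension-free way. I would split the integration at the natural scale $t_\star:=1/(n-2)$. In the \emph{short-time} regime $t\le t_\star$, the heat semigroup's smoothing should give $\|\nabla^2\log h_t\|_{\mathrm{op}}\lesssim L/\sqrt{t}$, using that $h$ is log-Lipschitz with constant $L$; this requires an intrinsic Hessian estimate on the sphere of Li--Yau flavour (or proved by differentiating stochastic Jacobi fields along spherical Brownian motion). Integrated over $[0,t_\star]$, this produces the $L/\sqrt{n-2}$ contribution. In the \emph{long-time} regime $t\ge t_\star$, the Bakry--Émery $L^\infty$ gradient bound $|\nabla\log h_t|\le L e^{-(n-2)t}$ (from $CD(n-2,\infty)$) combined with the identity $\nabla^2\log h_t=\nabla^2 h_t/h_t-\nabla\log h_t\otimes\nabla\log h_t$ and an iterated gradient estimate should yield $\|\nabla^2\log h_t\|_{\mathrm{op}}\lesssim L^2 e^{-2(n-2)t}$, contributing $L^2/(n-2)$ after integration. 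Summing the two regimes gives a Lipschitz constant of the form $\exp(C[L/\sqrt{n-2}+L^2/(n-2)])$.

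The main obstacle is the short-time Hessian estimate. Standard Euclidean heat-smoothing arguments need to be redone intrinsically on $\Sph$ in covariant form, keeping careful track of the curvature corrections arising from the Bochner identity and of the effect of parallel transport along the flow. One has to ensure that none of the resulting constants depend on $n$ except through the sharp factor $n-2$, which is precisely what yields the claimed dimension-free estimate and, in particular, the sharp $L$-dependence exhibited by the Gaussian calculation following Theorem \ref{thm:Euclidean}.
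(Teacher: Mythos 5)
Your high-level strategy matches the paper's: reduce to a pointwise upper bound on $\nabla_S^2\log P_te^{-W}$, integrate it in time, and exponentiate (this is Proposition \ref{prop:2nd_derivative_to_Lipschitz}, which on a manifold needs no ``parallel transport corrections'' --- the semiconvexity bound along geodesics controls $\frac{d}{dt}d(x_t,y_t)^2$ directly). The identification of $n-2$ as the substitute for $\kappa$ and the vanishing of $K$ are also correct. However, the step you yourself flag as ``the main obstacle'' --- the Hessian estimate for log-Lipschitz data --- is exactly where the content of the proof lies, and the route you sketch would not deliver the claimed constant. The natural implementation of ``heat smoothing for log-Lipschitz $h$'' via Bismut's formula requires bounding $\EE\left[|\gradsph h(X_t^x)|\,|M_t|\right]$ for a martingale $M_t$, and the only a priori control on $|\gradsph h|=|h\,\gradsph\log h|\le Lh$ forces a Cauchy--Schwarz step and a reverse H\"older (hypercontractivity) inequality, which produces a factor $\exp\bigl(cL^2(1-e^{-2(n-2)t})/(n-2)\bigr)$. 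Carried through na\"ively, this turns the final bound into $\exp\bigl(e^{O(L^2/(n-2))}\cdot(\cdots)\bigr)$ --- precisely the doubly exponential estimate of the general-manifold Theorem \ref{thm:manifold}, not the sharp single exponential claimed here. The paper removes this factor by a truncation device (Lemma \ref{lem:martingale_bound_sphere}): split on the event $\{|M_t|\ge\delta\}$, use a sub-Gaussian tail for $M_t$ (Lemma \ref{lem_deviation-mart}) to beat the $e^{cL^2\cdot}$ factor, and choose $\delta\sim L\sqrt{\varphi(t)}\sqrt{(1-e^{-2(n-2)t})/(n-2)}$; this is what yields the clean $\bigl(L+L^2/\sqrt{n-2}\bigr)$ prefactor. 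Your proposal contains no substitute for this mechanism.

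A second concrete omission: on the sphere, the Bismut--Elworthy--Li representation of $\Hesssph P_tf$ (Lemma \ref{lem:bismutsphere}) has \emph{two} terms, and the curvature term $Q_t\int_0^tQ_s^{-1}\Riem_S(\para_sdB_s,Q_s(v))Q_s(k(s)v)$ is not a lower-order correction --- it contributes at the same order as the main term (it is responsible for the ``$+1$'' alongside ``$1/\sqrt{t}$'' in Proposition \ref{prop:2nd_derivative_sphere}). The paper handles it by exploiting the constant-curvature identity \eqref{eq:curv_sphere} to show this term is itself a martingale with quadratic variation at most $1/2$, and then applies the same truncated martingale lemma. Your sketch mentions ``curvature corrections from the Bochner identity'' but gives no mechanism for controlling them. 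Finally, your long-time claim $\|\nabla_S^2\log h_t\|_{\mathrm{op}}\lesssim L^2e^{-2(n-2)t}$ is stronger than what the method yields for the term $\nabla_S^2h_t/h_t$ (which decays like $e^{-(n-2)t}$, one semigroup factor, since $h$ is only Lipschitz); this does not break the final integration --- the weaker rate suffices on $[t_\star,\infty)$ --- but as stated it is not justified. In short: the skeleton is right, but the two ingredients that actually produce the sharp, dimension-free constant are missing.
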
 
	
	The scaling in $\sqrt{n}$ of the radius is essentially sharp for dimension-free behavior, as well as the behavior in $\exp(L + L^2)$ of the Lipschitz constant,  since if we let $n$ go to infinity, the distribution of a single coordinate converges to a standard Gaussian measure, so we should recover the same type of estimates as in the Gaussian case in the limit, which is indeed the case with our estimate. When compared to Theorem \ref{thm:Euclidean}, one may see that the convexity term $\kappa$ is replaced by the Ricci curvature of the sphere, which is of order $1$ when the diameter scales like $\sqrt{n}$. The other difference is the existence of the regularity parameter $K$, which does not appear in Theorem \ref{thm_sphere}. Indeed, the source measure is uniform and its density is constant. 
	\newline

	In Section \ref{sec:genmanifold} we shall consider a more general setting of weighted Riemannian manifolds and our bounds will be given both in terms of the regularity of the density and the curvature of the manifold, as in Theorem \ref{thm_sphere}. In particular, our results shall apply to manifolds that satisfy the Bakry-\'Emery curvature-dimension condition, see \cite[Chapter 1.6]{bakry2013analysis}, as well as some extra conditions. 
	We state here an informal version of the theorem and defer the exact statement and definitions to Theorem \ref{thm:manifold} in Section \ref{sec:genmanifold}. 
	\begin{theorem}
		\label{thm:manifold_intro}
		Let $(M,g,\mu)$ be a weighted Riemannian manifold with $\mu = e^{-V}d\mathrm{Vol},$ and let $\nu = e^{-W}d\mu$, with $W$ an $L$-Lipschitz function. Then, under appropriate assumptions on the curvature of $M$ and on $V$, 
		the Langevin transport map from $\mu$ onto $\nu$ is Lipschitz with constant $e^{e^{cL^2}}$ for large $L$, where $c$ can be made explicit, and depends on the curvature and on $V$, but not on the dimension. 
	\end{theorem}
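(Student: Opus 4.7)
My plan is to follow the Kim--Milman Langevin transport construction, adapted to the Riemannian setting. Let $(P_t)_{t\ge 0}$ denote the diffusion semigroup on $M$ with invariant measure $\mu$, generated by $\Delta-\langle\nabla V,\nabla\cdot\rangle$, set $\rho=d\nu/d\mu\propto e^{-W}$ and $\rho_t:=P_t\rho$, so that $\rho_0=\rho$ and $\rho_t\to 1$ as $t\to\infty$. The Langevin transport map $T:\mu\to\nu$ is obtained by integrating in reverse time the deterministic flow on $M$ with time-dependent velocity field $-\nabla\log\rho_t$. My first step is to lift this flow to the tangent bundle via parallel transport along trajectories; the Jacobian then satisfies a Jacobi-type ODE whose driver combines $-\Hess\log\rho_t$ with curvature corrections arising from $\Riem$ contracted against $\nabla\log\rho_t$. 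Applying Gronwall reduces the Lipschitz estimate to controlling the time integral of the operator norm of $\Hess\log\rho_t$ together with a curvature times $|\nabla\log\rho_t|$ term.

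Next I would bound $|\nabla\log\rho_t|$ using the Bakry--\'Emery gradient commutation $|\nabla P_t f|\le e^{-\kappa t}P_t|\nabla f|$, valid under the $\mathrm{CD}(\kappa,\infty)$ condition that the hypotheses of Theorem \ref{thm:manifold_intro} include: since $|\nabla e^{-W}|\le L e^{-W}$, one obtains $|\nabla\log\rho_t|\le L e^{-\kappa t}$. This handles both the curvature contribution above and the tensor-product piece appearing in the identity $\Hess\log\rho_t=\Hess\rho_t/\rho_t-\nabla\log\rho_t\otimes\nabla\log\rho_t$, each contributing at most $O(L+L^2)$ to the integral.

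The heart of the proof is the remaining estimate on the operator norm of $\Hess\rho_t/\rho_t$. I would establish a second-order commutation of the form ``$\Hess P_tf$ has operator norm at most $\Phi(t)\|\nabla f\|_\infty$'' with $\Phi$ integrable on $(0,\infty)$: exponentially decaying at infinity from the curvature assumption, and with a $1/\sqrt{t}$ smoothing singularity at the origin of the kind that already appeared in the Euclidean proof of Theorem \ref{thm:Euclidean}. On a manifold such a commutation is substantially more delicate than in $\R^n$: differentiating $P_t f$ twice produces Bochner terms involving $\Hess V$ as well as covariant derivatives of $\Ric$ and $\Riem$, and the curvature and $V$-regularity hypotheses in Theorem \ref{thm:manifold_intro} are precisely what is needed to absorb these terms through an iterated Duhamel representation.

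The main obstacle, and the source of the double exponential in the conclusion, is that the constants produced in the second-order commutation depend on $\|\nabla\log\rho_t\|_\infty=O(L)$; propagating this through the Duhamel iteration inflates $\int_0^\infty \|\Hess\log\rho_t\|\,dt$ to order $e^{O(L^2)}$, and the outer Gronwall from the Jacobian ODE then exponentiates once more to produce the claimed $e^{e^{O(L^2)}}$ Lipschitz constant. A secondary technical point will be to check that the reversed flow remains in a region where the manifold and potential hypotheses provide uniform constants; this should follow from completeness of $(M,g)$ combined with a maximum-principle argument along the flow trajectories.
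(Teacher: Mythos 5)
Your overall architecture matches the paper's: reduce the Lipschitz bound to $\int_0^\infty\sup_x\|\nabla_M^2\log P_te^{-W}(x)\|_{\mathrm{op}}\,dt$ via the flow's Jacobian and Gr\"onwall (this is the paper's Proposition \ref{prop:2nd_derivative_to_Lipschitz}; note that the first variation of a flow of a time-dependent vector field satisfies $\frac{D}{dt}J=\nabla_J X_t$ with \emph{no} Riemann-tensor correction, so the extra curvature term you insert in the Jacobian ODE is spurious, though harmless for an upper bound), then control the gradient by Bakry--\'Emery commutation and the Hessian by a second-order commutation estimate of Bismut type (the paper imports this from Cheng--Thalmaier--Wang). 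However, there is a genuine gap at the decisive step. You state the second-order commutation in the form $\|\Hess P_tf\|_{\mathrm{op}}\le\Phi(t)\|\nabla f\|_\infty$ and apply it to $f=e^{-W}$. This normalization cannot close the argument: what you need is $\nabla_M^2\log P_tf\preceq\frac{\nabla_M^2 P_tf}{P_tf}$, i.e.\ a bound on $\nabla_M^2P_tf(x)$ that is \emph{pointwise proportional to $P_tf(x)$}. A sup-norm bound $\Phi(t)\|\nabla e^{-W}\|_\infty$ divided by $P_te^{-W}(x)$ produces the uncontrolled (and dimension-dependent, indeed possibly infinite) ratio $\sup e^{-W}/P_te^{-W}(x)$. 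The usable form of the commutation bounds $\nabla_M^2P_tf(x)$ by $(P_t|\nabla_M f|^2(x))^{1/2}$ and $P_t|\nabla_M f|(x)$, and one must then convert $(P_t|\nabla_M f|^2)^{1/2}$ into a multiple of $P_tf$.

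That conversion is exactly where the inner exponential comes from, and your proposal misattributes it. The paper uses a reverse H\"older inequality for log-Lipschitz functions under $\mathrm{CD}(\kappa,\infty)$ (Lemma \ref{lem:reverseHollder} and Remark \ref{rmk:cdk}, via the log-Sobolev inequality for the heat-kernel measure $P_t^*\delta_x$): $(P_t|\nabla_M e^{-W}|^2)^{1/2}\le L\,e^{L^2(1-e^{-2\kappa t})/(2\kappa)}P_te^{-W}$. The factor $e^{L^2/(2\kappa)}$ then sits inside the exponent of the Lipschitz constant, giving $e^{e^{O(L^2)}}$. Your explanation --- that a ``Duhamel iteration'' whose constants depend on $\|\nabla\log\rho_t\|_\infty=O(L)$ ``inflates'' the integral to $e^{O(L^2)}$ --- does not correspond to any identified mechanism: a linear dependence on $L$ in the commutation constants would yield $O(L+L^2)$ in the exponent, not $e^{O(L^2)}$, and nothing in your outline produces the hypercontractive factor. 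Without the second-moment term and the reverse H\"older step, the claimed bound does not follow.
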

	The estimate of Theorem \ref{thm:manifold_intro} is still dimension-free, although significantly worse than what we derived in Theorem \ref{thm:Euclidean} and Theorem \ref{thm_sphere}. We believe that the estimate  of Theorem \ref{thm:manifold_intro} is suboptimal and that the double exponential can be omitted, as in the sphere.
	
	As far as we know, in the Riemannian setting, the above-mentioned results are the first results about Lipschitz transport maps with a dimension-free behavior (under appropriate scaling). The existence of such maps paves the way to several applications of interest. In Section \ref{sect_applications}, we prove new functional inequalities, both in the Euclidean and Riemannian settings, and we shall also discuss open problems on optimal transport maps.

	\paragraph{The inverse map:} In all of the examples above, we have taken a well-conditioned weighted manifold that satisfies some desirable combination of convexity and curvature assumptions. In these cases, we showed that log-Lipschitz perturbations can be realized as push-forward by Lipschitz mappings. We also investigate the analogous question in the reverse direction. Namely, we show that there exists a Lipschitz transport map from the perturbation, which is often not as well-behaved, back to the source measure.
	
	\begin{theorem}
		\label{thm:meta_reverse_intro}
		In all of the above settings, the inverses of the Langevin transport maps are Lipschitz with dimension-free constants. 
	\end{theorem}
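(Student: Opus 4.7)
The plan is to adapt the proofs of Theorems \ref{thm:Euclidean}, \ref{thm_sphere}, and \ref{thm:manifold_intro} essentially verbatim, but to track the \emph{smallest} singular value of the differential of the Langevin flow rather than the largest; this directly controls the Lipschitz constant of the inverse map. Recall that the Langevin transport map $T$ arises as the time-infinity limit of a flow $T_t$ driven by a time-dependent velocity field $v_t$ built from $\nabla V$ and the logarithm of the heat-flow density $\rho_t$, with differential obeying $\partial_t(DT_t) = (Dv_t)(T_t)\, DT_t$. For any unit vector $u$,
\[
\frac{d}{dt}\,\|DT_t u\|^2 = 2\,\bigl\langle DT_t u,\ Dv_t(T_t)\, DT_t u\bigr\rangle,
\]
so bounding the symmetric part of $Dv_t$ from above yields the forward Lipschitz estimate, while bounding it from below yields a lower bound on $\sigma_{\min}(DT_\infty)$ and hence a Lipschitz estimate for $T^{-1}$.

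First, in each of the three settings I would decompose $Dv_t$ into a dissipative contribution coming from the Langevin dynamics (whose favorable sign is governed by $\kappa$ in the Euclidean case, by $n-2$ on the sphere, and by the Bakry--\'Emery curvature together with $V$ in the general Riemannian case) plus a perturbation depending on $\nabla W$ and the Hessian of $\log\rho_t$. Since the perturbation is controlled only through $L = \|\nabla W\|_\infty$ (together with commutator terms involving $|\nabla^3 V|\le K$ in the Euclidean case), it is intrinsically two-sided: as a symmetric operator its spectrum lies in a symmetric interval about zero. Consequently every upper bound of the form $\mathrm{(good)} + \mathrm{(bad)}$ established in the forward proofs is automatically matched by a lower bound $\mathrm{(good)} - \mathrm{(bad)}$ with the same numerics, and integrating this reversed Gr\"onwall inequality from $0$ to $\infty$ and exponentiating produces a Lipschitz constant for $T^{-1}$ with the same scaling in $L$, $\kappa$, $K$, and the curvature as in the corresponding forward theorem.

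The main obstacle is checking that every auxiliary estimate used in the forward arguments is genuinely two-sided rather than one-sided. Two places need care: the propagation of Hessian bounds on $\log\rho_t$ along the heat flow, which in the forward argument only needs an upper bound but here must also be controlled from below; and, in the Euclidean theorem, the third-order term $|\nabla^3 V|\le K$ entering through commutators. Both points are benign: the commutator bounds are written in terms of absolute values and so contribute symmetrically, while the Bakry--\'Emery-type arguments for $\log\rho_t$ on the sphere and on sufficiently curved manifolds yield uniform two-sided Hessian bounds once combined with $|\nabla W|\le L$. Granting these verifications, the reversed Gr\"onwall argument carries through and produces, in each setting, a dimension-free Lipschitz constant for $T^{-1}$ of the same order as for $T$.
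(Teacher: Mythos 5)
Your proposal follows essentially the same route as the paper: Proposition \ref{prop_reverse_lip} is exactly your reversed Gr\"onwall inequality (a lower bound $\nabla_M^2\log P_te^{-W}\succeq -\ell_t g$ makes $S=T^{-1}$ Lipschitz with constant $\exp\left(\int_0^\infty \ell_t\,dt\right)$), and the required lower bound is obtained, as you anticipate, from the two-sidedness of the Bismut-formula estimates combined with the gradient bound $|\nabla_M \log P_t e^{-W}|\le Le^{-\kappa t}$ (Proposition \ref{prop:meta_reverse}). The one imprecision is your claim that the lower bound matches the upper bound \emph{with the same numerics}: in the identity $\nabla_M^2 \log h = \frac{\nabla_M^2 h}{h} - (\nabla_M \log h)^{\otimes 2}$ the squared-gradient term is discarded for free in the forward (upper) bound but must be paid for in the reverse (lower) bound, which costs an extra $L^2 e^{-2\kappa t}$ in $\ell_t$ and hence an extra $L^2/(2\kappa)$ in the exponent of the Lipschitz constant of $T^{-1}$ --- harmless for the dimension-free conclusion.
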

	The exact value of the Lipschitz constants turn out to be comparable to the Lipschitz constants of the Langevin transport maps. In Section \ref{sec:rev_lip} we give the exact dependence of the Lipschiz constants on all parameters. Theorem \ref{thm:meta_reverse_intro} has an additional interesting consequence: since maps can be composed, the theorem implies the existence of Lipschitz maps from one log-Lipschitz perturbation onto any other. 
	
	\section*{Acknowledgments} We gratefully acknowledge the contributions of Joe Neeman who was involved in the earlier stages of this work. In particular,  the example showing Theorem \ref{thm:Euclidean} is sharp is due to him. We also thank Patrick Cattiaux, Daniel Lacker, Michel Ledoux, and Lorenzo Schiavo for useful discussions. In addition, we thank the ananoymous referee for their helpful comments.  
	
	M.F. was supported by the Projects MESA (ANR-18-CE40-006) and EFI (ANR-17-CE40-0030) of the French National Research Agency (ANR). This material is based upon work supported by the National
	Science Foundation under Award Number 2002022. 
	
	\section{Preliminaries} \label{sec:LT}
	
	Before explaining the construction of the Langevin transport map, we introduce the Langevin dynamics on $\R^n$. Let $d\mu=e^{-V}dx$ be a probability measure on $\R^n$ with $V: \R^n\to\R$ twice-differentiable. The \emph{Langevin dynamics}, associated to $\mu$, starting at $x\in \R^n$ is the solution of the stochastic differential equation
	\begin{align}
		\label{eq:SDElangevin}
		dX_t^x=-\nabla V(X_t^x)dt+\sqrt{2}d\omega_t,\quad X_0^x=x,
	\end{align}
	where $(\omega_t)$ is a standard Brownian motion in $\R^n$.  We denote by $(P_t)$ the Langevin semigroup,
	$$
	P_tf(x):= \EE [f(X_t^x)],
	$$
	for any $f:\R^n\to \R$ for which the above is defined. It is well known that under appropriate regularity assumptions $(P_t)$ is ergodic. Consequently, for any $x\in \R^n$, the law of $X_t^x$ converges weakly to $\mu$ as $t\to\infty$. 
	
	The definition of the Langevin dynamics can readily be extended to Riemannian manifolds (see \cite{bakry2013analysis,Hsu} for in-depth discussions). Let $(M,g)$ be a complete $n$-dimensional Riemannian manifold, and let $O(M)$ be its orthonormal frame bundle. Let $(B_t)$ be a standard Brownian motion in $\R^n$ and let $(\Phi_t)$ be the horizontal Brownian motion on $O(M$) defined by
	\[
	d\Phi_t=\sum_{i=1}^nH^i(\Phi_t)\circ dB_t,
	\]
	where $\{H^i\}_{i\in [n]}$ are the horizontal lifts of the standard basis $\{e_i\}_{i\in [n]}$, and $\circ$ stands for the Stratonovitch integral. We assume that $(\Phi_t)$ does not explode in  finite time. The process $(\omega_t)$ given by $\omega_t=\pi\Phi_t$, where $\pi:O(M)\to M$ is the canonical projection, is what we refer to as the Brownian motion on $M$.
	
	Now, as in the Euclidean case, given  a probability measure $d\mu=e^{-V}d\mathrm{Vol}$ on $M$, where $d\mathrm{Vol}$ is the volume measure on $M$ and $V:M\to \R$ is twice-differentiable, we define the \emph{Langevin dynamics} starting at $x\in M$ as the solution of the stochastic differential equation
	\begin{align}
		\label{eq:SDE_Langevin_manifold}
		dX_t^x=-\nabla_M V(X_t^x)dt+\sqrt{2}\circ d\omega_t,\quad X_0^x=x,
	\end{align}
	where $\nabla_M$ is the gradient in $M$. Note that the use of the Stratonovitch integral instead of the It\^ o integral is immaterial in our setting because the coefficient of the Brownian motion is a constant. 
	
	\subsection*{The Langevin transport map}
	\label{subsubsec:tranport}
	We now briefly explain the construction of the Langevin transport map. The reader is referred to \cite{MS21} for further details on the constructions introduced in this section. Let $d\nu=e^{-W}d\mu$ be a probability measure on $M$ and consider the Langevin dynamics, associated to $\mu$, and starting at $\nu$,
	\begin{align}
		\label{eq:SDElangevinnu}
		X_t:=X_t^x \ \ \text{with} \ \ x\sim \nu.
	\end{align}
	The flow of probability measures $(\rho_t):=(\text{Law}(X_t))$ forms an interpolation between $\rho_0=\nu$ and $\rho_{\infty}=\mu$. In particular, it satisfies the transport equation
	\[
	\partial_t\rho_t=-\nabla_M \cdot (\rho_t\nabla_M\log P_t e^{-W}),
	\]
	with $\nabla_M$ standing for the gradient in $M$. We define a flow of diffeomorphisms $\{S_t\}_{t \geq 0}$ as a solution to the following integral curve equation
	\begin{align}
		\label{eq:Langevin_flow}
		\partial_t S_t=-\nabla_M\log P_t e^{-W}(S_t),\quad S_0(x)=x ~~\forall x\in M.
	\end{align}
	It turns out that for any $t \geq 0$, $S_t$ transports $\nu$ to $\rho_t$. Setting $T_t:=S_t^{-1}$, and letting 
	\[
	T:=\lim_{t\to\infty} T_t,
	\]
	we see that $T$ transports $\mu$ to $\nu$. This is the \emph{Langevin transport map} between $\mu$ and $\nu$.\footnote{The existence of solutions to the above equations, and of the limits, is non-trivial in general. These issues are discussed in length in \cite{otto2000generalization, KiMi12, MS21} for the various settings we consider. In particular, the convexity and curvature assumptions we shall enforce in the coming proofs are enough to guarantee that all maps under consideration are well-defined.}
	The particular form of the equation in \eqref{eq:Langevin_flow} is particularly amenable to establishing Lipschitz properties of $T$ from estimates of $\nabla_M^2\log P_te^{-W}$, where $\nabla_M^2$ is the Hessian in $M$,
	since differentiating \eqref{eq:Langevin_flow} yields
	\[
	\partial_t \nabla_MS_t=-\nabla_M^2\log P_t e^{-W}(S_t)\nabla_MS_t,\quad \nabla_MS_0=\mathrm{I}.
	\]
	This is the content of the following Lemma. The proof is an adaption of \cite[Lemma 3]{MS21} and \cite[Lemma 3.2]{neeman2022lipschitz} which only consider Euclidean spaces.
	
	\begin{proposition}
		\label{prop:2nd_derivative_to_Lipschitz}
		If for all $t\ge 0$,
		\[
		\nabla_M^2\log P_te^{-W}\preceq \theta_t g,
		\]
		then $T$ is $\exp\left(\int_0^{\infty}\theta_tdt\right)$-Lipschitz.
	\end{proposition}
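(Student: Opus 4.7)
The plan will be to derive a pointwise lower bound on the differential $dS_t$ at every $x \in M$, invert it to obtain a pointwise upper bound on the differential $dT_t = dS_t^{-1}$, and then pass to the limit $t \to \infty$. The key input is the variational equation associated to \eqref{eq:Langevin_flow}, which on a manifold must be written in terms of covariant derivatives rather than ordinary ones.

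Fix $x \in M$ and $v \in T_x M$, pick a curve $\gamma_s$ with $\gamma_0 = x$ and $\dot\gamma_0 = v$, and set $v_t := \partial_s S_t(\gamma_s)|_{s=0} = (dS_t)_x v$, a vector field along the trajectory $t \mapsto S_t(x)$. Differentiating \eqref{eq:Langevin_flow} in $s$, and using torsion-freeness of the Levi--Civita connection to exchange $D/dt$ and $D/ds$, yields the Jacobi-type equation
\[
\frac{D v_t}{dt} \;=\; -\,\nabla_M^2 \log P_t e^{-W}\big|_{S_t(x)} \cdot v_t, \qquad v_0 = v.
\]
From this, $\tfrac{d}{dt}\|v_t\|^2 = 2\langle Dv_t/dt, v_t\rangle = -2\,\nabla_M^2 \log P_t e^{-W}(v_t, v_t)$, so the Hessian hypothesis gives $\tfrac{d}{dt}\|v_t\|^2 \geq -2\theta_t \|v_t\|^2$, and Gr\"onwall produces the lower bound
\[
\|v_t\| \;\geq\; \|v\|\exp\!\left(-\int_0^t \theta_s\, ds\right).
\]

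Since $T_t = S_t^{-1}$, the chain rule at $y = S_t(x)$ gives $(dT_t)_y = ((dS_t)_x)^{-1}$, so the previous inequality upgrades to $\|(dT_t)_y\|_{\mathrm{op}} \leq \exp(\int_0^t \theta_s\, ds)$. Integrating this operator-norm bound along smooth curves in $M$ and taking the infimum of lengths yields the Riemannian Lipschitz estimate $d(T_t(y), T_t(y')) \leq \exp(\int_0^t \theta_s\, ds)\, d(y, y')$. Finally, since a pointwise limit of $K$-Lipschitz maps is $K$-Lipschitz for any uniform $K$, letting $t \to \infty$ in the convergence $T_t \to T$ (which is assumed to exist, per the footnote in the excerpt) will give the desired bound $\mathrm{Lip}(T) \leq \exp(\int_0^\infty \theta_s\, ds)$.

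The main technical point will be justifying the central covariant computation — in particular the symmetry $D/dt\, \partial_s = D/ds\, \partial_t$ applied to the two-parameter family $S_t(\gamma_s)$, and verifying that the Hessian term emerges with the correct sign. Beyond that, the argument is a transparent Gr\"onwall-plus-limit manipulation, directly generalizing \cite[Lemma 3]{MS21} and \cite[Lemma 3.2]{neeman2022lipschitz} from the Euclidean case, where the covariant derivative reduces to an ordinary derivative and $\nabla_M^2$ to the usual Hessian.
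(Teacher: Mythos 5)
Your proof is correct, but it takes a different technical route from the paper's. The paper argues at the level of two points rather than one tangent vector: it invokes the synthetic characterization of the lower Hessian bound $\nabla_M^2 F_t \succeq -\theta_t g$ (for $F_t = -\log P_t e^{-W}$) as a monotonicity-type inequality for $\nabla_M F_t$ at the endpoints of a constant-speed geodesic, then applies the first variation formula for $d(x_t,y_t)^2$ along two trajectories of the flow and runs Gr\"onwall directly on the squared distance, so that $d(x_t,y_t) \geq e^{-\int_0^t \theta_s ds}\, d(x_0,y_0)$ and the inverse map is Lipschitz. You instead linearize the flow: the covariant variational equation $\tfrac{D v_t}{dt} = -(\nabla_M^2 \log P_t e^{-W})^{\#} v_t$ (whose derivation via the symmetry $\tfrac{D}{dt}\partial_s = \tfrac{D}{ds}\partial_t$ and the sign of the Hessian term is exactly as you describe), Gr\"onwall on $\|v_t\|^2$, inversion of the differential, and integration along curves. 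The trade-off: the paper's two-point argument never needs differentiability of $S_t$ in the initial condition, but it does lean on the first variation of $d^2$, which is delicate where geodesics are non-unique (the cut locus) --- a point the paper does not discuss; your infinitesimal argument sidesteps the distance function entirely until the harmless final step of taking infima over lengths of curves, at the price of assuming (as is legitimate here, by smoothness of the driving vector field) that the flow is $C^1$ in its initial data. Both yield the same constant, and your limit argument ($T_t \to T$ pointwise preserves a uniform Lipschitz bound) matches the paper's concluding step, which it delegates to \cite[Lemma 3.2]{neeman2022lipschitz}.
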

	\begin{proof}
		Consider $F_t : M \longrightarrow \R$ a time-dependent family of smooth functions on $(M,g)$, such that for any $t > 0$ we have
		$$\nabla_M^2 F_t \geq -\theta_t g.$$
		As is classical (e.g., \cite[Proposition 16.2(iv')]{villani2008optimal}), this is equivalent to saying that for any $t > 0$, $x,y \in M$ and constant speed geodesic $\alpha : [0, 1] \longrightarrow M$ connecting $x$ to $y$, we have
		$$\langle \dot{\alpha}(1), \nabla_M F_t(y) \rangle - \langle \dot{\alpha}(0), \nabla_M F_t(x)\rangle \geq -\theta_t d(x,y)^2.$$
		
		Therefore, if $x_t$ and $y_t$ are time-dependent gradient flows of $F_t$ with different initial data, and $\alpha^t$ is a constant-speed geodesic connecting $x_t$ to $y_t$, we have (with $d$ standing for the distance induced by the Riemannian metric),
		\begin{align*}
			\frac{d}{dt}d(x_t,y_t)^2 &= 2\left( \langle \dot{y}_t, \dot{\alpha}^t(1) \rangle-\langle \dot{x}_t, \dot{\alpha}^t(0)\rangle  \right) \\
			&= 2\left( \langle \nabla_M F(y_t), \dot{\alpha}^t(1) \rangle-\langle \nabla_M F(x_t),\dot{\alpha}^t(0)\rangle \right) \\
			&\geq -2\theta_t d(x_t,y_t)^2.
		\end{align*}
		The rest of the proof proceeds as in the proof of \cite[Lemma 3.2]{neeman2022lipschitz}, with the manifold setting making no difference.
	\end{proof}
	
	As is clear from Proposition \ref{prop:2nd_derivative_to_Lipschitz}, our goal will be to estimate the Hessian along the Langevin dynamics. Such estimates have previously been studied in the literature, in particular by Elworthy and Li \cite{ElworthyLi}. We also mention \cite{thompson2020}, which contains a pedagogical exposition of second derivative computations and estimates in the manifold setting. For the general manifold setting, our results will be based on the recent work of Cheng, Thalmaier, and Wang \cite{cheng2021}. However, for the Euclidean and Sphere setting, our sharp results require new Hessian estimates, which we shall provide. All of the above methods are based on Bismut's integration by parts formula \cite{Bis81}. 
	
	\section{Euclidean spaces}
	\label{sec:Euclidean}
	This section is devoted to the proof of Theorem \ref{thm:Euclidean}. For the rest of this section, we fix two probability measures $\mu = e^{-V(x)}dx$ and $\nu=e^{-W(x)}d\mu$ on $\RR^n$. We require that $\mu$, the source, satisfies the assumptions of Theorem \ref{thm:Euclidean}, 
	\[
	\nabla^2 V(x) 	\succeq \kappa \mathrm{I}\quad \forall ~x\in \R^n \quad\text{and} \quad |\nabla^3V(x)(u,u)|\le K \quad\forall ~u\in \Sph,
	\]
	for some $\kappa > 0$ and $K \geq 0$.
	We also require that $\nu$, the target, is an $L$-log-Lipschitz perturbation of $\mu$,
	\[
	|\nabla W(x)|\le L \quad \forall~ x\in \R^n.
	\]
	Further, as in Section \ref{sec:LT}, we let $X_t$ stand for the Langevin dynamics, associated to $\mu$ with $X_0 \sim \nu$, and $P_t$ is its semigroup. As suggested by Proposition \ref{prop:2nd_derivative_to_Lipschitz} we shall require the following global bound of $\nabla^2\log P_t e^{-W(x)}$. 
	\begin{proposition}
		\label{prop:2nd_derivative_Euclidean}
		For every $t\ge 0$ and $x \in \RR^n$,
		\begin{align*}
			\nabla^2\log P_t e^{-W(x)}\preceq Le^{-\kappa t}\left[5L+\frac{5}{\sqrt{t}}+\frac{Kt}{2}\right] \mathrm{I}.
		\end{align*}
	\end{proposition}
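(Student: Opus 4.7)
The plan is to prove the Hessian bound by first peeling off the negative semi-definite piece of $\nabla^2\log P_te^{-W}$, and then controlling $\nabla^2 P_te^{-W}/P_te^{-W}$ by a combination of direct differentiation along the Langevin flow (which handles the low-order derivative of $W$) and Bismut's integration-by-parts formula (which dispenses with the second derivative of $W$, for which no pointwise estimate is available).

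Concretely, setting $f:=e^{-W}$ and using
\[
\nabla^2\log P_tf=\frac{\nabla^2 P_tf}{P_tf}-(\nabla\log P_tf)^{\otimes 2},
\]
the last summand is negative semi-definite and is dropped. It therefore suffices to produce the claimed upper bound on $\nabla^2 P_tf(x)(u,u)/P_tf(x)$ for each unit $u\in\R^n$. The log-Lipschitz hypothesis is used throughout in the pointwise form $|\nabla f|\le Lf$ (which avoids a damaging $\|f\|_{\infty}$ factor). Let $J_s^x:=\nabla_xX_s^x$ be the first-variation process; the equation $\dot J_s=-\nabla^2V(X_s)J_s$, $J_0=\mathrm{I}$, together with $\nabla^2V\succeq\kappa\mathrm{I}$, gives the deterministic contraction $|J_s^xu|\le e^{-\kappa s}|u|$. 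Differentiating $\nabla P_tf(x)\cdot u=\EE[\nabla f(X_t^x)\cdot J_t^xu]$ once more in $x$ yields
\[
\nabla^2 P_tf(x)(u,u)=\EE[\nabla^2 f(X_t^x)(J_t^xu,J_t^xu)]+\EE[\nabla f(X_t^x)\cdot H_t^{u,u}],
\]
where $H_t^{u,u}=\partial_u(J_t^xu)$ is the second variation. Since $H$ satisfies a linear ODE driven by $-\nabla^3V(X_s)(J_s^xu,J_s^xu)$, Duhamel's formula combined with $|\nabla^3V|\le K$ and the contraction of $J$ gives $|H_t^{u,u}|\le Kte^{-\kappa t}$ (up to the factor $1/2$). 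Together with $|\nabla f|\le Lf$, this produces exactly the $\tfrac12 LKte^{-\kappa t}$ contribution in the target bound after division by $P_tf(x)$.

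The principal obstacle is the $\nabla^2 f$ term, which cannot be bounded pointwise. I would handle it via Bismut's integration-by-parts formula on $[0,t]$: applied to one of the two derivatives, with a weight $k:[0,t]\to\R$ satisfying $\int_0^tk(s)\,ds=1$, it gives a representation
\[
\nabla^2 P_tf(x)(u,u)=\EE\!\left[\nabla f(X_t^x)\cdot J_t^xu\cdot N_t\right]+\EE[f(X_t^x)\cdot R_t],
\]
where $N_t=\tfrac{1}{\sqrt{2}}\int_0^tk(s)\langle J_s^xu,dB_s\rangle$ has $L^2$-norm of order $1/\sqrt{t}$ (using the contraction of $J$), and $R_t$ is a correction built from differentiating $N_t$ in the starting point (hence of order $Ke^{-\kappa t}$ via the second variation). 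Passing to the tilted probability $d\widetilde\PP:=\tfrac{f(X_t^x)}{P_tf(x)}d\PP$, so that $\EE[f(X_t^x)\,Y]/P_tf(x)=\widetilde\EE[Y]$, converts the quotients into genuine expectations; a Cauchy–Schwarz inequality on the first summand together with $|\nabla f|\le Lf$ produces both the $5L^2e^{-\kappa t}$ and the $5Le^{-\kappa t}/\sqrt{t}$ terms.

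The delicate points will be (i) choosing the Bismut weight $k(s)$ so that the bookkeeping yields exactly the $L^2+L/\sqrt{t}+LKt$ structure with explicit constants, (ii) routing all estimates through the tilted measure $\widetilde\PP$ so that $\|f\|_{\infty}$ never enters and dimension-freeness is preserved, and (iii) checking that the Bismut identity survives with these weights in the non-compact setting, which is ensured by the strong convexity of $V$ and polynomial growth of $\nabla^3V$. The rest is routine: contraction gives the $e^{-\kappa t}$ factor, $|\nabla W|\le L$ gives the $L$ factors, and the $K$ factor arises solely from the second-variation correction.
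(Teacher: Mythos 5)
Your overall architecture coincides with the paper's: the same decomposition of $\nabla^2\log P_tf$ with the squared-gradient term dropped, the same first- and second-variation estimates $|\nabla_uX_s^x|\le e^{-\kappa s}$ and $|\nabla^2_{u,u}X_s^x|\le Kse^{-\kappa s}$ via Gr\"onwall, the pointwise use of $|\nabla f|\le Lf$, and the Elworthy--Li/Bismut second-order representation splitting $\nabla^2P_tf$ into a martingale term and a second-variation term (your weight $k$ with $\int_0^tk=1$ is the paper's $k\equiv 1/t$). The second-variation term is handled exactly as in Lemma \ref{lem:2ndterm}, up to the harmless factor $2$ you lose by bounding $\EE[\nabla f(X_t^x)\cdot H_t]$ pointwise instead of through the time integral $\frac1t\int_0^ts\,ds$.

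The genuine gap is in the martingale term, which is the crux of the proposition. After tilting by $d\widetilde\PP=\frac{f(X_t^x)}{P_tf(x)}d\PP$ you must bound $\widetilde\EE[|N_t|]$ (or, after Cauchy--Schwarz, $\widetilde\EE[N_t^2]$), and the moments of $N_t$ under $\widetilde\PP$ are not those under $\PP$. The only way to transfer them back with the tools you invoke is the reverse H\"older inequality for log-Lipschitz functions (Lemma \ref{lem:reverseHollder}), $\EE[f(X_t^x)^2]^{1/2}\le e^{L^2(1-e^{-2\kappa t})/(2\kappa)}P_tf(x)$, and this inserts a factor $\exp(cL^2/\kappa)$ multiplying the $Le^{-\kappa t}/\sqrt t$ term; integrating in $t$ then yields a Lipschitz constant of order $\exp\bigl(CL\kappa^{-1/2}e^{cL^2/\kappa}\bigr)$ --- a double exponential in $L^2$, as in the general-manifold Theorem \ref{thm:manifold}, not the claimed $\exp(O(L/\sqrt\kappa+L^2/\kappa))$. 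The paper's mechanism for beating this factor, which your sketch omits, is the truncation of $M_t^{x,u}$ at level $\delta$: the contribution of $\{|M_t^{x,u}|<\delta\}$ is $\delta\, Le^{-\kappa t}P_tf(x)$, the contribution of $\{|M_t^{x,u}|\ge\delta\}$ carries the reverse-H\"older factor but is damped by the sub-Gaussian tail $\PP[|M_t^{x,u}|\ge\delta]^{1/4}\le 2^{1/4}\exp\bigl(-\tfrac{\delta^2}{4}\tfrac{\kappa}{1-e^{-2\kappa t}}\bigr)$ (Lemma \ref{lem_deviation-mart} together with Burkholder--Davis--Gundy for the fourth moment), and the choice $\delta=2\sqrt2\,L\frac{1-e^{-2\kappa t}}{2\kappa}$ makes the two exponentials cancel; this $\delta$ is precisely the source of the $5L^2e^{-\kappa t}$ term, which your account leaves unexplained. (Your tilted-measure idea can in fact be completed by a different route: under $\widetilde\PP$ the process is a Doob $h$-transform, and Girsanov turns $N_t$ into a $\widetilde\PP$-martingale of quadratic variation $O(1/t)$ plus a drift bounded by $\frac1t\int_0^te^{-\kappa s}\,Le^{-\kappa(t-s)}ds=Le^{-\kappa t}$ via the commutation $|\nabla P_{t-s}f|\le Le^{-\kappa(t-s)}P_{t-s}f$, which reproduces the $L/\sqrt t+L^2$ structure --- but no such argument appears in your proposal, and ``Cauchy--Schwarz on the first summand'' alone does not deliver it.)
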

	The proof of Proposition \ref{prop:2nd_derivative_Euclidean} will be the main focus of this section. Before delving into the proof, let us show how Theorem \ref{thm:Euclidean} follows.
	\begin{proof}[Proof of Theorem \ref{thm:Euclidean}]
		Using 
		\begin{align*}
			\int_0^{\infty}e^{-\kappa t} dt=\frac{1}{\kappa}, \quad\quad\int_0^{\infty} \frac{e^{-\kappa t}}{\sqrt{t}}dt=\sqrt{\frac{\pi}{\kappa}}, \quad\quad \int_0^{\infty}  t e^{-\kappa t} dt=\frac{1}{\kappa^2},
		\end{align*}
		the combination of Proposition \ref{prop:2nd_derivative_to_Lipschitz} and Proposition \ref{prop:2nd_derivative_Euclidean} yields 
		that $T$, the Langevin transport map, is Lipschitz with constant $\exp\left(\frac{5L^2}{\kappa}+\frac{5\sqrt{\pi}L}{\sqrt{\kappa}}+\frac{LK}{2\kappa^2}\right)\le \exp\left(10\left[\frac{L}{\sqrt{\kappa}}+\frac{L^2}{\kappa}+\frac{LK}{\kappa^2}\right]\right)$.
	\end{proof}
	\subsection{Stochastic representation of the Hessian}
	In order to establish Lipschitz properties of the Langevin transport map our main task will be to derive global bounds on $\nabla^2\log P_te^{-W}$. Toward this goal, our main technical tool is a stochastic representation of $\nabla^2P_t$, known as \emph{Bismut's formula}. Before stating the formula, we first define the \emph{Jacobi flow} of the Langevin dynamics as $(\nabla X_t^x)_{t\geq 0}$, where $\nabla X_t^x:\R^n\to\R^n$ is the derivative of (the random function) $X^x_t$ with respect to the initial condition $x$,
	\[
	\nabla_u X_t^x:=\lim_{\varepsilon\downarrow 0}\frac{X_t^{x+\varepsilon u}-X_t^x}{\varepsilon}\in \R^n, \quad\forall u\in \R^n.
	\]
	The Jacobi flow $(\nabla X_t^x)_{t\geq 0}$ appears naturally when applying the chain rule in the differentiation of $ P_tf(x)$ with respect to $x$, formally,
	\[
	\nabla P_tf(x)=\nabla \EE[f(X_t^x)]=\EE[\nabla f(X_t^x)\nabla X_t^x].
	\]
	The process  $(\nabla X_t^x)$ satisfies the differential equation
	\begin{align}
		\label{eq:Jacobi_Euclidean}
		\partial_t\nabla X_t^x=-\nabla^2V(X_t^x)\nabla X_t^x, \quad \nabla X_0^x=\mathrm{I},
	\end{align}
	which can be seen by differentiating \eqref{eq:SDElangevin} and noting that $(\omega_t)$ does not depend on $x$. 
	Since we require Hessian estimates of $P_tf$, we also consider the \emph{second variation} of $(X_t^x)$, 
	\[
	\nabla_{u,v}^2X_t^x:=\lim_{\varepsilon\downarrow 0}\frac{\nabla_v X_t^{x+\varepsilon u}-\nabla_v X_t^x}{\varepsilon}\in \R^n, \quad \forall u,v\in \R^n
	\]
	(which is symmetric with respect to $u$ and $v$). The equation for the process $(\nabla^2X_t^x)$ can be obtained by differentiating \eqref{eq:Jacobi_Euclidean},
	\begin{align}
		\label{eq:Jacobi_derivative_Eucliean}
		\partial_t\nabla_{u,v}^2 X_t^x=-\nabla^3V(X_t^x)(\nabla_u X_t^x,\nabla_v X_t^x)-\nabla^2V(X_t^x)\nabla_{u,v}^2 X_t^x, \quad \nabla^2 X_0^x=0,
	\end{align}
	where, for $x\in \R^n$ and $u,v\in \R^n$,
	\[
	[\nabla^3V(x)(u,v)]_i=\sum_{j,k=1}^n\partial^3_{ijk}V(x)u_jv_k=\nabla_u\nabla_v(\nabla V)_i(x)\quad \forall ~i\in [n].
	\]
	
	With these definitions, and the following notation,
	$$\nabla_vf(x):= \langle f(x), v\rangle \quad\text{ and } \quad\nabla^2_{v,u}f(x) = \langle v,\nabla^2f(x)u\rangle$$
	(the definition is extended naturally for vector-valued functions), the stochastic formula now reads:
	\begin{lemma}{\textnormal{(\cite[p. 8 in arXiv version]{ElworthyLi})}}
		\label{lem:Bismut}
		$~$
		
		For any $x\in \R^n$, $u,v\in \R^n$, and $f:\R^n\to \R$ twice-differentiable,
		\begin{align*}
			\nabla_{u,v}^2P_t f(x)&=\frac{1}{t\sqrt{2}}\EE\left[\nabla_v [f(X_t^x)] M_t^{x,u}\right]+\frac{1}{t}\int_0^t \EE\left[\langle \nabla P_{t-s}f(X_s^x),\nabla_{u,v}^2X_s^x\rangle\right]ds
		\end{align*}
		where 
		\[
		M_t^{x,u}:=\int_0^t\langle \nabla_u X_s^x, d\omega_s\rangle\quad x\in \R^n~, u\in \R^n.
		\]
	\end{lemma}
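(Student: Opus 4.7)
The plan is to derive the identity in two stages: first establish the classical Bismut--Elworthy--Li formula for the gradient,
\[
\nabla_u P_t f(x) = \frac{1}{t\sqrt{2}}\EE[f(X_t^x)\,M_t^{x,u}],
\]
and then obtain the Hessian formula by differentiating this identity once more in the direction $v$ and reducing the resulting auxiliary stochastic integral back to its Clark--Ocone form. This keeps the proof symmetric in $u$ and $v$ only implicitly, and matches the structure of the stated formula: a first boundary-type term of order $1/(t\sqrt{2})$ and a second Duhamel-type integral coming from the second variation $\nabla_{u,v}^2 X_s^x$.

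For the gradient formula, I would start from the fact that $N_s := P_{t-s}f(X_s^x)$ is a martingale on $[0,t]$ (the $\partial_s$ term cancels the generator), with It\^o representation $dN_s = \sqrt{2}\,\langle \nabla P_{t-s}f(X_s^x),\,d\omega_s\rangle$. Writing $f(X_t^x) = P_t f(x) + \sqrt{2}\int_0^t \langle \nabla P_{t-s}f(X_s^x),\,d\omega_s\rangle$ and multiplying by $M_t^{x,u}$, the It\^o isometry gives
\[
\EE[f(X_t^x)\,M_t^{x,u}] = \sqrt{2}\int_0^t \EE\bigl[\langle \nabla P_{t-s}f(X_s^x),\,\nabla_u X_s^x\rangle\bigr]\,ds.
\]
Now $\langle \nabla P_{t-s}f(X_s^x),\nabla_u X_s^x\rangle$ is exactly the directional derivative in $x$ of the martingale $N_s$, so it is itself a martingale in $s$ with constant expectation $\nabla_u P_t f(x)$. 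The integral therefore equals $\sqrt{2}\,t\,\nabla_u P_t f(x)$, yielding the gradient Bismut formula.

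Next I would differentiate this identity in direction $v$. By the chain rule, $\nabla_v[f(X_t^x)] = \langle \nabla f(X_t^x), \nabla_v X_t^x\rangle$, and differentiating the It\^o integral $M_t^{x,u}$ under the initial condition produces
\[
\nabla_v M_t^{x,u} = \int_0^t \langle \nabla_{u,v}^2 X_s^x,\,d\omega_s\rangle,
\]
using the symmetry $\nabla_v\nabla_u X_s^x = \nabla_{u,v}^2 X_s^x$. This yields the decomposition
\[
\nabla_{u,v}^2 P_t f(x) = \frac{1}{t\sqrt{2}}\EE\bigl[\nabla_v[f(X_t^x)]\,M_t^{x,u}\bigr] + \frac{1}{t\sqrt{2}}\EE\Bigl[f(X_t^x)\int_0^t \langle \nabla_{u,v}^2 X_s^x,\,d\omega_s\rangle\Bigr].
\]
The first summand is already the desired first term. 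For the second, I reapply the martingale representation $f(X_t^x) = P_t f(x) + \sqrt{2}\int_0^t \langle \nabla P_{t-s}f(X_s^x), d\omega_s\rangle$; the constant piece $P_t f(x)$ drops out upon taking expectation, and the It\^o isometry converts the remaining piece into $\frac{1}{t}\int_0^t \EE[\langle \nabla P_{t-s}f(X_s^x),\nabla_{u,v}^2 X_s^x\rangle]\,ds$, matching the second term of the lemma.

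The main obstacle is technical rather than conceptual: one must justify differentiating the stochastic integral $M_t^{x,u}$ with respect to the initial condition, exchanging $\EE$ with $\nabla_v$, and ensuring the It\^o isometries are applicable. Under the assumptions of Theorem \ref{thm:Euclidean} (strong convexity of $V$ together with a bounded third derivative), the Jacobi flow $\nabla X_s^x$ and its derivative $\nabla^2 X_s^x$ solve the linear ODEs \eqref{eq:Jacobi_Euclidean}--\eqref{eq:Jacobi_derivative_Eucliean} with coefficients enjoying uniform bounds, giving all the needed integrability and regularity in $x$. With these technicalities in place the formal computation above is rigorous.
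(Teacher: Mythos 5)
Your derivation is correct: the paper itself gives no proof of this lemma (it is quoted from Elworthy--Li), and your two-stage argument --- first establishing $\nabla_u P_t f(x) = \frac{1}{t\sqrt{2}}\EE[f(X_t^x)M_t^{x,u}]$ via the martingale $P_{t-s}f(X_s^x)$ and the It\^o isometry, then differentiating that identity in $v$ and converting $\EE\bigl[f(X_t^x)\int_0^t\langle\nabla^2_{u,v}X_s^x,d\omega_s\rangle\bigr]$ back with a second It\^o isometry --- is exactly the standard Elworthy--Li route, with the constants $\frac{1}{t\sqrt 2}$ and $\frac{1}{t}$ coming out correctly for the diffusion coefficient $\sqrt 2$. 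The technical justifications you defer (differentiating $M_t^{x,u}$ in the initial condition, exchanging $\nabla_v$ with $\EE$) are indeed available under the standing assumptions, so nothing is missing.
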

	In our setting, $f=e^{-W}$ and we need to bound the various quantities appearing in  Lemma \ref{lem:Bismut}. The first derivative of $P_te^{-W}$ can be controlled by  the log-Lipschitz assumption. The first variation $\nabla X^x_t$ is controlled by the $\kappa$-log-concavity assumption, and the second variation $\nabla^2 X^x_t$ is controlled by the $\kappa$-log-concavity assumption and the bound on the third derivative of $V$.
	\subsection{Estimates}
	The purpose of this section is to provide the requisite estimates for the proof of Proposition \ref{prop:2nd_derivative_Euclidean}.
	We start with the first variation.
	
	\begin{lemma}{\textnormal{(First variation estimate)}}
		\label{lem:1stderv}
		$~$
		
		Suppose that $\nabla^2 V(x) 	\succeq \kappa \mathrm{I}$ for every $x\in \R^n$. Then, a.s., for any fixed $v\in S^{n-1}$,
		\[
		|\nabla_vX_t^x|\le e^{-\kappa t}\quad \forall ~t\ge 0.
		\]
		Consequently, for any differentiable $f:\R^n\to \R$,
		\[
		|\nabla [f(X_t^x)]|\le e^{-\kappa t} |\nabla f(X_t^x)|.
		\]
	\end{lemma}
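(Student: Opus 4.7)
The plan is to work directly from the ODE \eqref{eq:Jacobi_Euclidean} satisfied by the Jacobi flow and apply a Gronwall-type argument in the squared norm. Fix $v \in \Sph$ and let $J_t := \nabla_v X_t^x \in \R^n$. Differentiating \eqref{eq:Jacobi_Euclidean} in the direction $v$ gives the (pathwise, $\omega$-by-$\omega$) ODE
\[
\partial_t J_t = -\nabla^2 V(X_t^x)\,J_t, \qquad J_0 = v.
\]

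The key step is to control $|J_t|^2$ using the convexity assumption. Computing
\[
\partial_t |J_t|^2 = 2\langle J_t, \partial_t J_t\rangle = -2\langle J_t, \nabla^2 V(X_t^x)\, J_t\rangle,
\]
and invoking the hypothesis $\nabla^2 V \succeq \kappa \mathrm{I}$ pointwise, we obtain $\partial_t |J_t|^2 \le -2\kappa |J_t|^2$. A direct application of Gronwall's inequality then yields $|J_t|^2 \le e^{-2\kappa t}|v|^2 = e^{-2\kappa t}$, i.e., $|J_t| \le e^{-\kappa t}$, which is the first claim. Since this argument is pathwise in the Brownian trajectory, the inequality holds almost surely.

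For the second assertion, I would simply apply the chain rule: for any differentiable $f:\R^n \to \R$,
\[
\nabla_v[f(X_t^x)] = \langle \nabla f(X_t^x), \nabla_v X_t^x\rangle,
\]
so by Cauchy--Schwarz and the bound just established,
\[
|\nabla_v[f(X_t^x)]| \le |\nabla f(X_t^x)|\,|\nabla_v X_t^x| \le e^{-\kappa t}|\nabla f(X_t^x)|.
\]
Taking the supremum over $v \in \Sph$ gives the stated inequality.

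I do not expect any genuine obstacle here: the stochastic noise $\sqrt{2}\,d\omega_t$ in \eqref{eq:SDElangevin} does not depend on the initial condition, so the variation equation \eqref{eq:Jacobi_Euclidean} is a pathwise ODE with no stochastic integral, and the contraction estimate follows from uniform convexity of $V$ exactly as in the deterministic gradient-flow setting. The only minor point to be careful about is ensuring the Jacobi flow is well-defined and differentiable in the initial condition, but this is standard under the smoothness assumptions on $V$ already imposed in Theorem \ref{thm:Euclidean}.
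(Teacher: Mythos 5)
Your proof is correct and follows essentially the same route as the paper: both differentiate the Jacobi flow equation \eqref{eq:Jacobi_Euclidean}, use $\nabla^2 V \succeq \kappa \mathrm{I}$ to get a differential inequality, apply Gr\"onwall, and conclude via the chain rule and Cauchy--Schwarz. The only cosmetic difference is that you run Gr\"onwall on $|J_t|^2$ while the paper uses $|J_t|$ directly; your variant even avoids the (harmless) division by $\beta_v(t)$ in the paper's computation.
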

	\begin{proof}
		By the Cauchy-Schwarz inequality, for any $v\in \R^n$,
		\[
		|\nabla_v[f(X_t^x)]|=|\langle\nabla f(X_t^x),\nabla_vX_t^x\rangle|\le |\nabla f(X_t^x)||\nabla_vX_t^x|.
		\]
		Hence, taking the supremum over $v\in \Sph$ it suffices to show that $|\nabla_vX_t^x|\le e^{-\kappa t}$. To see the latter bound fix $v\in \Sph$ and let $\beta_v:\R_{\ge 0}\to \R_{\ge 0}$ be given by $\beta_v(t):=|\nabla_vX_t^x|$. Then, by \eqref{eq:Jacobi_Euclidean} and the convexity assumption on $V$,
		\begin{align*}
			\partial_t \beta_v(t)=\frac{\langle\nabla_vX_t^x),\partial_t\nabla_vX_t^x\rangle}{\beta_v(t)}=-\frac{\langle\nabla_vX_t^x,\nabla^2V(X_t^x)\nabla _vX_t^x\rangle}{\beta_v(t)}\le -\kappa\frac{|\nabla _vX_t^x|^2}{\beta_v(t)}=-\kappa \beta_v(t).
		\end{align*}
		Since $\beta_v(0)=1$, Gr\"onwall's inequality yields $|\nabla_vX_t^x|\le e^{-\kappa t}$.
	\end{proof}
	
	We now move to the second variation.
	
	\begin{lemma}{\textnormal{(Second variation estimate)}}
		\label{lem:2stderv}
		$~$
		
		Suppose that 
		\[
		\nabla^2 V(x) 	\succeq \kappa \mathrm{I}\quad \forall ~x\in \R^n \quad\text{and} \quad |\nabla^3V(x)(u,u)|\le K \quad\forall ~u\in \Sph.
		\]
		Then, a.s., for any $t\ge 0$ and $v\in \Sph$,
		\[
		|\nabla_{v,v}^2X_t^x|\le K t e^{-\kappa t}.
		\]
	\end{lemma}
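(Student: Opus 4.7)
The natural approach is to mimic the proof of Lemma~\ref{lem:1stderv} but applied to the ODE \eqref{eq:Jacobi_derivative_Eucliean} with $u=v$. Setting $\alpha(t):=|\nabla_{v,v}^2X_t^x|$ with $v\in\Sph$, I would formally compute
\[
\partial_t\alpha(t)=\frac{\langle\nabla_{v,v}^2X_t^x,\,\partial_t\nabla_{v,v}^2X_t^x\rangle}{\alpha(t)},
\]
substitute \eqref{eq:Jacobi_derivative_Eucliean}, and bound the two resulting terms separately. The Hessian term contributes $-\frac{\langle\nabla_{v,v}^2X_t^x,\nabla^2V(X_t^x)\nabla_{v,v}^2X_t^x\rangle}{\alpha(t)}\le -\kappa\alpha(t)$ by the $\kappa$-convexity of $V$, exactly as in Lemma~\ref{lem:1stderv}. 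The third-derivative term is controlled by Cauchy--Schwarz together with the trilinear bound on $\nabla^3V$: writing $w:=\nabla_vX_t^x/|\nabla_vX_t^x|\in\Sph$ and using bilinearity, we have $|\nabla^3V(X_t^x)(\nabla_vX_t^x,\nabla_vX_t^x)|\le K|\nabla_vX_t^x|^2\le Ke^{-2\kappa t}$ by Lemma~\ref{lem:1stderv}. Combining these yields the scalar differential inequality
\[
\partial_t\alpha(t)\le -\kappa\alpha(t)+Ke^{-2\kappa t},\qquad \alpha(0)=0.
\]

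From here I would apply Gr\"onwall in the standard multiplier form: setting $\beta(t):=e^{\kappa t}\alpha(t)$, one gets $\partial_t\beta\le Ke^{-\kappa t}$, whence $\beta(t)\le\frac{K}{\kappa}(1-e^{-\kappa t})$, and therefore
\[
\alpha(t)\le \frac{K}{\kappa}(1-e^{-\kappa t})e^{-\kappa t}\le Kt\,e^{-\kappa t},
\]
using the elementary inequality $1-e^{-\kappa t}\le \kappa t$ to match the form in the statement.

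\textbf{Main obstacle.} The only non-routine point is the differentiation of $\alpha(t)=|\nabla_{v,v}^2X_t^x|$ at times where $\alpha(t)=0$ (in particular at $t=0$, where by hypothesis $\nabla^2 X_0^x=0$), since the Euclidean norm is not differentiable at the origin. The standard fix is to replace the norm by its square: set $\widetilde\alpha(t):=|\nabla_{v,v}^2X_t^x|^2$, which is smooth, compute $\partial_t\widetilde\alpha\le -2\kappa\widetilde\alpha+2Ke^{-2\kappa t}\sqrt{\widetilde\alpha}$ using the same bounds, and then either work with $\sqrt{\widetilde\alpha}$ away from its zeros and extend by continuity, or directly invoke a comparison principle with the solution $\widetilde\alpha^*(t)=(Kt\,e^{-\kappa t})^2$ of the corresponding equality. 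Either route yields the claimed bound; no further ingredients beyond Gr\"onwall and Lemma~\ref{lem:1stderv} are required.
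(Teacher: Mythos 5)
Your proposal is correct and follows essentially the same route as the paper: differentiate $t\mapsto|\nabla_{v,v}^2X_t^x|$, use the convexity of $V$ for the $-\kappa\alpha$ term and Lemma~\ref{lem:1stderv} for the forcing term, and conclude by Gr\"onwall/comparison. The only (harmless) difference is that you keep the sharper forcing term $Ke^{-2\kappa t}$ from the quadratic dependence on $\nabla_vX_t^x$, whereas the paper bounds it by $Ke^{-\kappa t}$ and compares with the exact solution $Kte^{-\kappa t}$; both yield the stated estimate, and your remark about differentiating the norm at its zeros is a fair point that the paper leaves implicit.
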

	
	\begin{proof}
		Fix $v\in \Sph$ and let $\beta_v:\R_{\ge 0}\to \R_{\ge 0}$ be given by $\beta_v(t):=|\nabla_{v,v}^2X_t^x|$. By \eqref{eq:Jacobi_derivative_Eucliean} and Lemma \ref{lem:1stderv},
		\begin{align*}
			\partial_t\beta_v(t)&=\frac{\langle\nabla_{v,v}^2 X_t^x,\partial_t\nabla_{v,v}^2 X_t^x\rangle}{\beta_v(t)}=\frac{\langle\nabla_{v,v}^2 X_t^x,-\nabla^3V(X_t^x)(\nabla_v X_t^x,\nabla_v X_t^x)-\nabla^2V(X_t^x)\nabla_{v,v}^2 X_t^x\rangle}{\beta_v(t)}\\
			&\le K \frac{|\nabla_{v,v}^2 X_t^x||\nabla_v X_t^x|}{\beta_v(t)}-\kappa \frac{|\nabla_{v,v}^2 X_t^x|^2}{\beta_v(t)}=K|\nabla_v X_t^x|  -\kappa \beta_v(t)\le Ke^{-\kappa t} -\kappa \beta_v(t).
		\end{align*}
		The solution to the differential equation 
		\[
		\partial_t\xi (t)=Ke^{-\kappa t} -\kappa \xi (t),\quad  \xi(0)=0
		\]
		is $\xi(t)=Kte^{-\kappa t}$ so by Gr\"onwall's inequality
		\[
		|\nabla_{v,v}^2X_t^x|=\beta_v(t)\le K t e^{-\kappa t}.
		\]
	\end{proof}
	
	We will also use a reverse H\"older inequality for log-Lipschitz functions under a $\kappa$-log-concavity assumption: 
	\begin{lemma}
		\label{lem:reverseHollder}
		Let $d\mu=e^{-V}dx$ and suppose that $\nabla^2 V(x) 	\succeq \kappa \mathrm{I}$ for every $x\in \R^n$. Then,  for every $L$-log-Lipschitz function $f:\R^n\to \R$,
		\[
		P_t(f^2)\le \exp\left(L^2\frac{1-e^{-2\kappa t}}{\kappa}\right)(P_tf)^2.
		\]
	\end{lemma}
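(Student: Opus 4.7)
The plan is the classical semigroup interpolation argument used to derive reverse hypercontractivity estimates in the Bakry--\'Emery framework. Set
\[
\Phi(s) := P_s\!\bigl((P_{t-s}f)^2\bigr), \qquad s \in [0,t],
\]
which interpolates between $\Phi(0) = (P_t f)^2$ and $\Phi(t) = P_t(f^2)$, and bound its growth. Writing $g_s := P_{t-s} f$ and using the carré du champ identity $\mathcal{L}(g^2) = 2g \mathcal{L} g + 2|\nabla g|^2$ for the Langevin generator $\mathcal{L} := \Delta - \nabla V \cdot \nabla$, together with the time-dependent Duhamel relation $\partial_s P_s h_s = P_s(\partial_s h_s) + P_s(\mathcal{L} h_s)$ applied to $h_s = g_s^2$, the $g_s \mathcal{L} g_s$ contributions cancel and one is left with
\[
\Phi'(s) = 2 P_s\!\bigl(|\nabla P_{t-s} f|^2\bigr).
\]

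The second ingredient I would assemble is the pointwise gradient bound $|\nabla P_r f| \le L e^{-\kappa r} P_r f$. Writing $f = e^{-W}$ with $|\nabla W| \le L$ yields $|\nabla f| \le L f$; then differentiating $P_r f(x) = \EE[f(X_r^x)]$ in $x$ and using Lemma \ref{lem:1stderv}, which gives $|\nabla_v X_r^x| \le e^{-\kappa r}$ for every unit $v$, produces
\[
|\nabla P_r f(x)| \le e^{-\kappa r}\, P_r(|\nabla f|)(x) \le L e^{-\kappa r}\, P_r f(x).
\]
Squaring this and substituting into the formula for $\Phi'(s)$ gives the differential inequality $\Phi'(s) \le 2 L^2 e^{-2\kappa(t-s)} \Phi(s)$, and a direct application of Grönwall's inequality yields
\[
\Phi(t) \le \Phi(0) \exp\!\left(\int_0^t 2 L^2 e^{-2\kappa(t-s)}\, ds\right) = \Phi(0) \exp\!\left(L^2\,\tfrac{1 - e^{-2\kappa t}}{\kappa}\right),
\]
which is precisely the claimed bound.

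The only genuinely delicate point is justifying the manipulations when $f$ is merely log-Lipschitz and possibly unbounded: both the differentiation under the expectation and the generator identities used in the computation of $\Phi'$ require some smoothness and integrability. The standard remedy is to first truncate $W$ (or mollify $f$) so that $W$ becomes bounded and smooth, carry out the argument for the approximation, and then pass to the limit; the uniform log-concavity of $\mu$ implies that $P_r f$ is smooth for every $r > 0$ and controls the tails of $\mu$, so the limiting procedure goes through with the constants depending only on $L$ and $\kappa$.
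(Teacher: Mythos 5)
Your proof is correct, and it takes a genuinely different (more self-contained) route than the paper. The paper proves the lemma by citing two external facts: the reverse H\"older inequality $\|f\|_{L^p(\eta)}\le \exp\bigl(\tfrac{C}{2}L^2(p-q)\bigr)\|f\|_{L^q(\eta)}$ for log-Lipschitz functions under a log-Sobolev inequality, and the local log-Sobolev inequality for $\eta=P_t^*\delta_x$ with constant $\frac{1-e^{-2\kappa t}}{2\kappa}$ under $\nabla^2V\succeq\kappa\mathrm{I}$; combining these with $p=2$, $q=1$ actually yields the sharper exponent $\frac{1}{4}L^2\frac{1-e^{-2\kappa t}}{\kappa}$, which is then relaxed to the stated one. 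You instead unpack everything into a single semigroup interpolation: the identity $\Phi'(s)=2P_s(|\nabla P_{t-s}f|^2)$ is the standard local variance computation, the pointwise bound $|\nabla P_r f|\le e^{-\kappa r}P_r|\nabla f|\le Le^{-\kappa r}P_rf$ follows from the first variation estimate (Lemma \ref{lem:1stderv}, equivalently the Bakry--\'Emery commutation used elsewhere in the paper), and Gr\"onwall gives exactly the stated constant $\exp\bigl(L^2\frac{1-e^{-2\kappa t}}{\kappa}\bigr)$. Your argument is slightly lossier in the constant (it uses the ``Poincar\'e-type'' rather than ``log-Sobolev-type'' interpolant) but avoids any citation, and it transfers verbatim to the $\mathrm{CD}(\kappa,\infty)$ manifold setting needed in Remark \ref{rmk:cdk}, since the gradient commutation is exactly equivalent to that condition. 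Your closing remarks on regularity (truncation/mollification and smoothness of $P_rf$ for $r>0$) adequately address the only delicate point. One small note: the bound $|\nabla f|\le Lf$ requires $f>0$, which is implicit in ``$L$-log-Lipschitz'' ($f=e^{-W}$ with $W$ Lipschitz), so this is fine, but it is worth stating since the lemma as written only says $f:\R^n\to\R$.
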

	\begin{proof}
		It is well-known (e.g., \cite[Eq. (1.3)]{milman2020reverse}) that if a measure $\eta$ satisfies a log-Sobolev inequality with constant $C$ then it satisfies a reverse H\"older inequality for  $L$-log-Lipschitz functions,
		\[
		\|f\|_{L^p(\eta)}\le \exp\left(\frac{C}{2}L^2(p-q)\right)\|f\|_{L^q(\eta)}\quad \forall ~0\le q<p.
		\]
		Since  the measure $\eta:=P_t^*\delta_x$ satisfies a log-Sobolev inequality with constant $C=\frac{1-e^{-2\kappa t}}{2\kappa}$ \cite[Eq. (5.5.5)]{bakry2013analysis}, and since
		\[
		P_t(f^2)(x)=\|f\|_{L^2(\eta)}^2\quad\text{and}\quad (P_tf)^2=\|f\|_{L^1(\eta)}^2,
		\]
		we get
		\[
		P_t(f^2)\le  \exp\left(\frac{1}{4}L^2\frac{1-e^{-2\kappa t}}{\kappa}\right)(P_tf)^2\le \exp\left(L^2\frac{1-e^{-2\kappa t}}{\kappa}\right)(P_tf)^2.
		\]
	\end{proof}
	\begin{remark} \label{rmk:cdk}
		The statement, and proof, of Lemma \ref{lem:reverseHollder} can be extended verbatim to the manifold setting by requiring that the measure satisfies the so-called curvature-dimension condition CD$(\kappa, \infty)$. In particular, the statement is for the uniform measure on the unit sphere $\Sph$, with $\kappa$ replaced by $n-2$.
	\end{remark}
	We shall also make use of the following classical lemma about martingales: 
	
	\begin{lemma} \label{lem_deviation-mart}
		Let $M_t$ be a continuous martingale, with $M_0 = 0$, and whose quadratic variation is almost surely bounded, that is, $\langle M\rangle_t \leq \varphi(t)$ for all $t \geq 0$. Then
		$$\PP[|M_t| \geq \delta ] \leq 2\exp\left(-\frac{\delta^2}{2\varphi(t)}\right).$$
		
	\end{lemma}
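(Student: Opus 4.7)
The plan is to apply the standard Chernoff-bound strategy built from the Doléans-Dade exponential martingale of $M$. For each $\lambda \in \RR$ I would set
\[
\mathcal{E}_t^\lambda := \exp\bigl(\lambda M_t - \tfrac{\lambda^2}{2}\langle M\rangle_t\bigr).
\]
By It\^o's formula $(\mathcal{E}_t^\lambda)_{t\ge 0}$ is a nonnegative continuous local martingale starting at $1$; being nonnegative, it is a supermartingale (Fatou's lemma along a reducing sequence of stopping times), so $\EE[\mathcal{E}_t^\lambda]\le 1$ for all $t\ge 0$.

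Next I would use the a.s.\ bound $\langle M\rangle_t \le \varphi(t)$ to convert this into a bound on the ordinary exponential moment of $M_t$: since $\exp(\lambda M_t) \le \exp(\tfrac{\lambda^2}{2}\varphi(t))\,\mathcal{E}_t^\lambda$ pointwise, taking expectations gives $\EE[\exp(\lambda M_t)] \le \exp(\tfrac{\lambda^2}{2}\varphi(t))$. Markov's inequality then yields, for any $\lambda>0$ and $\delta>0$,
\[
\PP[M_t\ge \delta] \le e^{-\lambda\delta}\,\EE[\exp(\lambda M_t)] \le \exp\bigl(-\lambda\delta + \tfrac{\lambda^2}{2}\varphi(t)\bigr).
\]
Optimizing over $\lambda$ by choosing $\lambda = \delta/\varphi(t)$ produces the one-sided Gaussian tail bound $\PP[M_t\ge \delta]\le \exp(-\delta^2/(2\varphi(t)))$. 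Applying the same argument to $-M_t$, which is again a continuous martingale with identical quadratic variation, and combining via a union bound gives the claimed two-sided estimate with the factor $2$ in front.

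The only delicate point is the passage from local martingale to genuine supermartingale for $\mathcal{E}_t^\lambda$, but this is completely standard given nonnegativity and needs no further localization thanks to the uniform a.s.\ bound on $\langle M\rangle_t$, which makes all exponential moments finite. I therefore do not foresee a real obstacle: the lemma is a textbook exponential deviation inequality and the steps above essentially constitute the full proof.
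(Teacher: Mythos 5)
Your proof is correct and follows essentially the same route as the paper: an exponential (Doléans-Dade) martingale bound on $\EE[\exp(\lambda M_t)]$ followed by a Chernoff bound optimized at $\lambda=\delta/\varphi(t)$, applied to $\pm M_t$ for the factor $2$. The only cosmetic difference is that you justify $\EE[\mathcal{E}_t^\lambda]\le 1$ via the supermartingale property of a nonnegative local martingale, whereas the paper invokes Novikov's condition to get equality; both are valid here since $\langle M\rangle_t\le\varphi(t)$ a.s.
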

	
	\begin{proof}
		From Novikov's condition, for any $\sigma \in \R$ we have
		$$\EE[\exp(\sigma M_t - \sigma^2\langle M\rangle_t/2)] = 1,$$ so
		$$\EE[\exp(\sigma M_t )] \leq \exp(\sigma^2 \varphi(t)/2).$$
		We then apply the Chernoff bound, 
		$$\PP[M_t \geq \delta] \leq \inf_{\sigma \geq 0} \exp(-\sigma \delta)\EE[\exp(\sigma M_t)]$$
		to conclude. 
	\end{proof}

	\subsection{Proof of Proposition \ref{prop:2nd_derivative_Euclidean}}

	By Lemma \ref{lem:Bismut}, 
	\begin{align*}
		\nabla_{u,v}^2P_t f(x)&=\frac{1}{t\sqrt{2}}\EE\left[\nabla_v [f(X_t^x)] M_t^{x,u}\right]+\frac{1}{t}\int_0^t \EE\left[(\nabla P_{t-s}f(X_s^x))^{\top}\nabla_{u,v}^2X_s^x\right]ds,
	\end{align*}
	and we will apply this identity with $f=e^{-W}$. We start by analyzing the first term. 
	\begin{lemma}
		\label{lem:martterm}
		For any $v\in \Sph$,
		\begin{align*}
			\frac{1}{t\sqrt{2}}\EE\left[\nabla_v [e^{-W(X_t^x)}] M_t^{x,u}\right]\le 5e^{-\kappa t}\left[L^2+\frac{L}{\sqrt{t}}\right]P_te^{-W(x)}.
		\end{align*}
	\end{lemma}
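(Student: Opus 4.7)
The plan is to dispatch the gradient factor first by a direct chain-rule computation. Since $\nabla_v[e^{-W(X_t^x)}]=\langle\nabla e^{-W}(X_t^x),\nabla_v X_t^x\rangle$, the log-Lipschitz bound $|\nabla e^{-W}|\le L e^{-W}$ together with Lemma \ref{lem:1stderv} gives $|\nabla_v[e^{-W(X_t^x)}]|\le L e^{-\kappa t}e^{-W(X_t^x)}$ for $v\in\Sph$. Consequently the claim reduces to proving
$$\EE\!\left[e^{-W(X_t^x)}|M_t^{x,u}|\right]\le C\,(Lt+\sqrt{t})\,P_te^{-W}(x)$$
for a controlled absolute constant $C$, from which the statement follows after multiplying by $Le^{-\kappa t}/(t\sqrt{2})$.

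The martingale $M_t^{x,u}$ has quadratic variation $\int_0^t|\nabla_u X_s^x|^2 ds\le\varphi(t):=\frac{1-e^{-2\kappa t}}{2\kappa}\le t$, again by Lemma \ref{lem:1stderv}, so Lemma \ref{lem_deviation-mart} yields the Gaussian-type deviation $\PP[|M_t^{x,u}|>\delta]\le 2\exp(-\delta^2/(2\varphi(t)))$. The weighted moment will be controlled by truncation: for a threshold $a>0$,
$$\EE\!\left[e^{-W(X_t^x)}|M_t^{x,u}|\right]\le a\,P_te^{-W}(x)+\EE\!\left[e^{-W(X_t^x)}|M_t^{x,u}|\mathbbm{1}_{|M_t^{x,u}|>a}\right].$$
The second term is estimated by Cauchy-Schwarz as $\sqrt{P_te^{-2W}(x)}\cdot\sqrt{\EE[(M_t^{x,u})^2\mathbbm{1}_{|M_t^{x,u}|>a}]}$. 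For the first factor, the reverse Hölder inequality of Lemma \ref{lem:reverseHollder} applied to $e^{-W}$ gives $\sqrt{P_te^{-2W}(x)}\le e^{L^2\varphi(t)}P_te^{-W}(x)$. For the second factor, integrating the tail bound through a layer-cake decomposition (splitting at $s=a^2$) yields $\EE[(M_t^{x,u})^2\mathbbm{1}_{|M_t^{x,u}|>a}]\le(2a^2+4\varphi(t))e^{-a^2/(2\varphi(t))}$.

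The only delicate point is the choice of truncation level $a$: the prefactor $e^{L^2\varphi(t)}$ from reverse Hölder can be catastrophic when $L^2\varphi(t)$ is large, and must be annihilated by the Gaussian tail $e^{-a^2/(4\varphi(t))}$ (after taking square roots). The choice $a=2L\varphi(t)$ gives $a^2/(4\varphi(t))=L^2\varphi(t)$, so the two exponentials cancel exactly. The truncation contribution is then $a\,P_te^{-W}(x)\le 2Lt\,P_te^{-W}(x)$, while the tail contribution reduces to $(\sqrt{2}\,a+2\sqrt{\varphi(t)})\,P_te^{-W}(x)\le(2\sqrt{2}\,Lt+2\sqrt{t})P_te^{-W}(x)$, using $\sqrt{a+b}\le\sqrt a+\sqrt b$ and $\varphi(t)\le t$. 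Summing the two pieces yields $\EE[e^{-W(X_t^x)}|M_t^{x,u}|]\le((2+2\sqrt{2})Lt+2\sqrt{t})P_te^{-W}(x)$; inserting this into the reduction of the first paragraph produces a prefactor $(\sqrt{2}+2)L^2+\sqrt{2}\,L/\sqrt{t}$, comfortably below the target $5[L^2+L/\sqrt{t}]$.
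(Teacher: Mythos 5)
Your proof is correct and follows essentially the same strategy as the paper's: pull out the pointwise bound $|\nabla_v[e^{-W(X_t^x)}]|\le Le^{-\kappa t}e^{-W(X_t^x)}$ from Lemma \ref{lem:1stderv}, truncate the martingale at a threshold, bound the small-martingale part by the threshold times $P_te^{-W}$, handle the tail via Cauchy--Schwarz together with the reverse H\"older inequality (Lemma \ref{lem:reverseHollder}) and the deviation bound (Lemma \ref{lem_deviation-mart}), and choose the threshold $a=2L\varphi(t)$ so that the $e^{L^2\varphi(t)}$ factor is exactly cancelled. The one genuine difference is technical: you estimate $\EE\left[M_t^2\,\mathbbm{1}_{|M_t|>a}\right]$ directly by a layer-cake integration of the exponential tail, whereas the paper bounds it by $\EE[|M_t|^4]^{1/4}\,\PP[|M_t|\ge a]^{1/4}$ via the Burkholder--Davis--Gundy inequality with $C_4\le 360$; your route is more elementary, avoids the BDG constant entirely, and your final constants $(\sqrt{2}+2)L^2+\sqrt{2}\,L/\sqrt{t}$ indeed sit below the stated $5[L^2+L/\sqrt{t}]$.
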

	\begin{proof}
		For any $\delta\ge 0$ and $f$ differentiable,
		\begin{align*}
			|\EE\left[\nabla_v [f(X_t^x)] M_t^{x,u}\right]|&\le \EE\left[|\nabla_v [f(X_t^x)]| |M_t^{x,u}|\right]\\
			&=\EE\left[1_{ |M_t^{x,u}|< \delta}|\nabla_v [f(X_t^x)]| |M_t^{x,u}|\right]+\EE\left[1_{ |M_t^{x,u}|\ge\delta}|\nabla_v [f(X_t^x)]| |M_t^{x,u}|\right]\\
			&\le \delta \EE\left[|\nabla_v [f(X_t^x)]| \right]+\EE\left[|\nabla_v [f(X_t^x)]|^2\right]^{1/2}\EE\left[1_{ |M_t^{x,u}|\ge\delta} |M_t^{x,u}|^2\right]^{1/2}\\
			&\le  \delta \EE\left[|\nabla_v [f(X_t^x)]| \right]+\EE\left[|\nabla_v [f(X_t^x)]|^2\right]^{1/2}\EE\left[|M_t^{x,u}|^4\right]^{1/4}\PP[ |M_t^{x,u}|\ge\delta]^{1/4}.
		\end{align*}
		We will now analyze the terms above one-by-one. For $v\in \Sph$, Lemma \ref{lem:1stderv} yields 
		\begin{align*}
			|\nabla_v [f(X_t^x)]|\le |\nabla [f(X_t^x)]| \le  e^{-\kappa t} |\nabla f(X_t^x)|= e^{-\kappa t}|f(X_t^x)| |\nabla \log f(X_t^x)|,
		\end{align*}
		so taking  $f=e^{-W}$ yields
		\begin{align}
			\label{eq:Jacobi_estimate}
			|\nabla_v [e^{-W(X_t^x)}]|\le  Le^{-\kappa t}e^{-W(X_t^x)}.
		\end{align}
		It follows that 
		\begin{align}
			\label{eq:estimate1}
			\EE[|\nabla_v [e^{-W(X_t^x)}]|]\le L e^{-\kappa t}P_te^{-W(x)}.
		\end{align}
		For the second term, \eqref{eq:Jacobi_estimate} gives
		\[
		\EE[|\nabla_v [e^{-W(X_t^x)}]|^2]^{1/2}\le L e^{-\kappa t}\left( P_t(e^{-W(x)})^2\right)^{1/2},
		\]
		so applying Lemma \ref{lem:reverseHollder} yields
		\begin{align}
			\label{eq:estimate2}
			\EE[|\nabla_v [e^{-W(X_t^x)}]|^2]^{1/2}\le  L \exp\left(L^2\frac{1-e^{-2\kappa t}}{2\kappa}-\kappa t\right)P_te^{-W(x)}.
		\end{align}
		Next we turn to analyze the martingale $(M_t)$. By Lemma \ref{lem:1stderv}, $|\nabla_uX_t^x|\le e^{-\kappa t}$ a.s. for any $u\in \Sph$ so the quadratic variation of $(M_t)$ satisfies
		\begin{align}
			\label{eq:quadratic_variation}
			\langle M^{x,u}\rangle_t=\int_0^t |\nabla_uX_s^x|^2 ds\le \frac{1-e^{-2\kappa t}}{2\kappa}.
		\end{align}
		On the other hand, the Burkholder-Davis-Gundy inequalities  yield that, for any $0<p<\infty$,
		\begin{align}
			\label{eq:BDG}
			\EE[|M_t^{x,u}|^p]\le C_p\EE[\langle M^{x,u}\rangle_t^{p/2}],
		\end{align}
		for a constant $C_p$ depending only on $p$. 
		Combining \eqref{eq:quadratic_variation} and \eqref{eq:BDG}, with $p=4$, and using $C_4\le 360$ (see \cite[proof of Proposition 3.26]{KaratzasShreve}), yields
		\begin{align}
			\label{eq:estimate3}
			\EE[|M_t^{x,u}|^4]^{1/4}\le 5\sqrt{\frac{1-e^{-2\kappa t}}{2\kappa}}.
		\end{align}
		Moreover, applying Lemma \ref{lem_deviation-mart}, we have
		\begin{equation*}
			\PP[ |M_t^{x,u}|\ge\delta] \leq 2\exp\left(-\delta^2\frac{\kappa}{1-e^{-2\kappa t}}\right).
		\end{equation*}
		It follows that
		\begin{align}
			\label{eq:estimate4}
			\PP[ |M_t^{x,u}|\ge\delta]^{1/4}\le 2^{1/4}\exp\left(-\frac{\delta^2}{4}\frac{\kappa}{1-e^{-2\kappa t}}\right).
		\end{align}
		Combining \eqref{eq:estimate1}, \eqref{eq:estimate2}, \eqref{eq:estimate3}, \eqref{eq:estimate4} yields
		\begin{align*}
			\frac{1}{t\sqrt{2}}&|\EE\left[\nabla_v [f(X_t^x)] M_t^{x,u}\right]|\\
			&\le \frac{1}{t\sqrt{2}}\left[\delta L e^{-\kappa t}+ L \exp\left(L^2\frac{1-e^{-2\kappa t}}{2\kappa}-\kappa t\right)5\sqrt{\frac{1-e^{-2\kappa t}}{2\kappa}}2^{1/4}\exp\left(-\frac{\delta^2}{4}\frac{\kappa}{1-e^{-2\kappa t}}\right)\right]P_te^{-W(x)}\\
			&= \frac{Le^{-\kappa t}}{t\sqrt{2}}\left[\delta  + 5\cdot 2^{1/4}\sqrt{\frac{1-e^{-2\kappa t}}{2\kappa}}\exp\left(L^2\frac{1-e^{-2\kappa t}}{2\kappa}\right)\exp\left(-\frac{\delta^2}{8}\frac{2\kappa}{1-e^{-2\kappa t}}\right)\right]P_te^{-W(x)}.
		\end{align*}
		We choose $\delta$ so that the term $e^{L^2}$ vanishes. In particular, we take
		\[
		\delta:=2\sqrt{2}L\frac{1-e^{-2\kappa t}}{2\kappa}
		\]
		to get
		\begin{align*}
			\frac{1}{t\sqrt{2}}|\EE\left[\nabla_v [f(X_t^x)] M_t^{x,u}\right]|&\le  \frac{Le^{-\kappa t}}{t\sqrt{2}}\left[2\sqrt{2}L\frac{1-e^{-2\kappa t}}{2\kappa} + 5\cdot 2^{1/4}\sqrt{\frac{1-e^{-2\kappa t}}{2\kappa}}\right]P_te^{-W(x)}\\
			&\le \frac{Le^{-\kappa t}}{t\sqrt{2}}\left[2\sqrt{2}L\frac{2\kappa t}{2\kappa} + 5\cdot 2^{1/4}\sqrt{\frac{2\kappa t}{2\kappa}}\right]P_te^{-W(x)}\\
			&=\left[2L^2e^{-\kappa t} + 5\cdot 2^{-1/2}\cdot 2^{1/4}\frac{Le^{-\kappa t}}{\sqrt{t}}\right]P_te^{-W(x)}\\
			&\le 5e^{-\kappa t}\left[L^2+\frac{L}{\sqrt{t}}\right]P_te^{-W(x)}.
		\end{align*}
	\end{proof}
	We now analyze the second term.
	\begin{lemma}
		\label{lem:2ndterm}
		\[
		\frac{1}{t}\int_0^t \left|\EE\left[\langle\nabla P_{t-s}e^{-W(X_s^x)},\nabla_{u,u}^2X_s^x\rangle\right]\right|ds\le  LKt\frac{e^{-\kappa t}}{2}P_te^{-W(x)}.
		\]
	\end{lemma}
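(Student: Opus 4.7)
The plan is straightforward: apply Cauchy--Schwarz pointwise to the integrand, dominate each factor by a previously established estimate, and then collapse everything back via the tower property of the Langevin semigroup. First I would bound
\[
\left|\langle \nabla P_{t-s}e^{-W}(X_s^x),\nabla_{u,u}^2 X_s^x\rangle\right|
\le \left|\nabla P_{t-s}e^{-W}(X_s^x)\right|\cdot \left|\nabla_{u,u}^2 X_s^x\right|,
\]
and then handle the second factor immediately using the second-variation estimate of Lemma~\ref{lem:2stderv}: for $u\in\Sph$, $|\nabla_{u,u}^2 X_s^x|\le Kse^{-\kappa s}$ almost surely.

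For the first factor, at an arbitrary point $y\in\R^n$ I would differentiate under the expectation and reuse exactly the reasoning that produced \eqref{eq:Jacobi_estimate}. Writing
\[
\nabla_v P_{t-s}e^{-W}(y)=\EE\left[\nabla_v\left[e^{-W}(X_{t-s}^y)\right]\right]
\]
and applying Lemma~\ref{lem:1stderv} together with the log-Lipschitz hypothesis $|\nabla e^{-W}|\le Le^{-W}$, one obtains the pointwise bound
\[
\left|\nabla P_{t-s}e^{-W}(y)\right|\le Le^{-\kappa(t-s)}P_{t-s}e^{-W}(y).
\]

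Putting these two estimates together and taking expectation over $X_s^x$, the semigroup property $\EE[P_{t-s}e^{-W}(X_s^x)]=P_te^{-W}(x)$ yields
\[
\left|\EE\left[\langle\nabla P_{t-s}e^{-W}(X_s^x),\nabla_{u,u}^2 X_s^x\rangle\right]\right|
\le LKs\,e^{-\kappa t}\,P_te^{-W}(x),
\]
and integrating $s$ over $[0,t]$ and dividing by $t$ produces exactly $\tfrac{LKt}{2}e^{-\kappa t}P_te^{-W}(x)$. There is no real obstacle in this lemma: it is substantially easier than Lemma~\ref{lem:martterm}, since the integrand carries no martingale factor, so neither the reverse H\"older inequality nor an optimisation in a truncation parameter $\delta$ is needed -- the pointwise first- and second-variation bounds combine cleanly with the tower property.
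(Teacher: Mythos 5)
Your proof is correct and follows essentially the same route as the paper: Cauchy--Schwarz on the pairing, the second-variation bound from Lemma~\ref{lem:2stderv}, the gradient commutation estimate $|\nabla P_{t-s}e^{-W}|\le Le^{-\kappa(t-s)}P_{t-s}e^{-W}$, and the tower property. The only cosmetic difference is that the paper cites the Bakry--\'Emery gradient estimate for the commutation bound, whereas you rederive it from the first-variation estimate of Lemma~\ref{lem:1stderv}; these are the same argument.
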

	\begin{proof}
		By \cite[eq. (5.5.4)]{bakry2013analysis}, for any $y\in \R^n$, 
		\begin{align}
			\label{eq:dercommute}
			|\nabla P_{t-s}e^{-W(y)}|\le e^{-\kappa (t-s)}P_{t-s}\left(|\nabla e^{-W(y)}|\right)=e^{-\kappa (t-s)}P_{t-s}\left(|\nabla W|e^{-W(y)}\right)\le L e^{-\kappa (t-s)} P_{t-s}e^{-W(y)}
		\end{align}
		so by \eqref{eq:dercommute} and  Lemma \ref{lem:2stderv},
		\begin{align*}
			&\frac{1}{t}\int_0^t \left|\EE\left[\langle\nabla P_{t-s}e^{-W(X_s^x)},\nabla_{u,u}^2X_s^x\rangle\right]\right|ds\le \frac{1}{t}\int_0^t \EE\left[|\nabla P_{t-s}e^{-W(X_s^x)}||\nabla_{u,u}^2X_s^x|\right]ds\\
			&\le \frac{1}{t}  \int_0^t \EE\left[ e^{-\kappa (t-s)}L (P_{t-s}e^{-W(X_s^x)}) Ks e^{-\kappa s}\right]ds = LK\frac{e^{-\kappa t}}{t}P_te^{-W(x)}\int_0^t s   ds= LKt\frac{e^{-\kappa t}}{2}P_te^{-W(x)}.
		\end{align*}
	\end{proof}
	We can now complete the proof of  Proposition \ref{prop:2nd_derivative_Euclidean}. By Lemma \ref{lem:martterm} and Lemma \ref{lem:2ndterm},
	\begin{align*}
		\nabla_{u,u}^2\log P_t e^{-W(x)}&=\frac{\nabla_{u,u}^2P_t e^{-W(x)}}{P_te^{-W(x)}}-|\nabla_u\log P_t e^{-W(x)}|^2\le \frac{\nabla_{u,u}^2P_t e^{-W(x)}}{P_te^{-W(x)}}\le 5e^{-\kappa t}\left[L^2+\frac{L}{\sqrt{t}}\right] +\frac{LK t}{2} e^{-\kappa t}\\
		&=Le^{-\kappa t}\left[5L+\frac{5}{\sqrt{t}}+\frac{Kt}{2}\right].
	\end{align*}

	\section{The round sphere} \label{sec:sphere}
	This section is devoted to the proof of Theorem \ref{thm_sphere}. Throughout this section, we let $\mu$ stand for the uniform probability measure on $\Sph$, the unit sphere in $\R^n$, and let $\nu = e^{-W(x)}d\mu$ be an $L$-log-Lipschitz perturbation of $\mu$. In other words, if $\gradsph$ is the spherical gradient, then for almost every $x \in \Sph$,
	$$
	|\gradsph W(x)| \leq L.
	$$
	As before, $X_t^x$ is the Langevin process associated with $\mu$, with semigroup $(P_t)$. Since $\mu$ is uniform, $X_t^x$ takes a particularly simple form $X_t^x = \omega_t$,
	where $\omega_t$ is a standard Brownian motion on $\Sph$, with $\omega_0 = x$. 
	We also write $X_t$ when $X_0 \sim \nu$.
	
	As suggested by Proposition \ref{prop:2nd_derivative_to_Lipschitz}, the  proof of Theorem \ref{thm_sphere} will require bounding $\nabla_S^2\log P_t e^{-W(x)}$. The bound of $\nabla_S^2\log P_t e^{-W(x)}$ will be obtained, as in the Euclidean setting, by using a stochastic representation via Bismut's formula. The main result of this section is the following result.
	\begin{proposition}
		\label{prop:2nd_derivative_sphere}
		For every $t\ge 0$ and $x \in \Sph$,
		\begin{align*}
			\nabla_S^2\log P_t e^{-W(x)}\preceq  12\left(L + \frac{L^2}{\sqrt{n-2}}\right)e^{-(n-2)t}\left(\frac{1}{\sqrt{t}} + 1\right)g.
		\end{align*}
	\end{proposition}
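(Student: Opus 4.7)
The plan is to mirror the proof of Proposition \ref{prop:2nd_derivative_Euclidean}, substituting each Euclidean ingredient by its Riemannian analog and exploiting the simplifications afforded by the sphere. Because $\mu$ is uniform, the potential $V$ is constant, so the role of $\kappa$-convexity of $V$ in the Euclidean case is played here by the Ricci curvature $\operatorname{Ric}=(n-2)g$. In exchange, the Bismut formula on a manifold introduces Riemann-curvature contributions in the second variation of the flow that must be controlled separately.

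First I would invoke the manifold version of Bismut's formula (\cite{ElworthyLi}, as used in \cite{cheng2021}) to write, for a tangent vector $u$ at $x$,
\[
\nabla_S^2 P_t e^{-W}(x)(u,u)=\frac{1}{t\sqrt{2}}\,\EE\!\left[\nabla_u^S[e^{-W(X_t^x)}]\,M_t^{x,u}\right]+\frac{1}{t}\int_0^t\EE\!\left[\big\langle \nabla_S P_{t-s}e^{-W}(X_s^x),\,\nabla^2_{u,u}X_s^x\big\rangle\right]ds,
\]
where $X_t^x=\omega_t$ is the Brownian motion on $\Sph$ and $M_t^{x,u}=\int_0^t\langle \nabla_u X_s^x,d\omega_s\rangle$. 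Since $\operatorname{Ric}$ is a constant multiple of the metric, the first-variation Jacobi field is the pure deterministic contraction $|\nabla_u X_t^x|\le e^{-(n-2)t}|u|$ (the analog of Lemma \ref{lem:1stderv}), and the associated gradient commutation $|\nabla_S P_t f|\le e^{-(n-2)t}P_t|\nabla_S f|$ replaces \eqref{eq:dercommute}.

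Next I would estimate the martingale term by the same truncation argument used to prove Lemma \ref{lem:martterm}: split $\EE[|\nabla_u^S[e^{-W(X_t^x)}]|\,|M_t^{x,u}|]$ according to $\{|M_t^{x,u}|<\delta\}$ versus its complement, invoke the reverse H\"older inequality on the sphere (Remark \ref{rmk:cdk}) with constant $\frac{1-e^{-2(n-2)t}}{2(n-2)}$, control $\EE[|M_t^{x,u}|^4]$ via Burkholder-Davis-Gundy and the quadratic-variation bound $\langle M^{x,u}\rangle_t\le\frac{1-e^{-2(n-2)t}}{2(n-2)}$, and apply the deviation Lemma \ref{lem_deviation-mart}. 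Choosing $\delta=2\sqrt{2}\,L\,\frac{1-e^{-2(n-2)t}}{2(n-2)}$ cancels the exponential $\exp\!\big(L^2\frac{1-e^{-2(n-2)t}}{2(n-2)}\big)$, exactly as in the Euclidean case. The improved $1/\sqrt{n-2}$ factor on the $L^2$ terms comes from using the bound $\frac{1-e^{-2(n-2)t}}{2(n-2)}\le\sqrt{t/(2(n-2))}$, which is available only because on the sphere the quadratic variation of $M^{x,u}$ is uniformly bounded by $1/(2(n-2))$ (whereas the Euclidean proof used only the cruder $(1-e^{-2\kappa t})/(2\kappa)\le t$).

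For the second-variation term, since $V$ is constant there is no $\nabla^3 V$ contribution; however, the SDE for $\nabla^2_{u,u}X_t^x$ on a manifold carries a Riemann-curvature term. Because $\Sph$ has constant sectional curvature and $\operatorname{Ric}$ is parallel, this Jacobi-type equation can be solved by a Gr\"onwall argument to produce a bound of the form $|\nabla^2_{u,u}X_t^x|\lesssim e^{-(n-2)t}$ (contracting, rather than growing linearly in $t$ as in the Euclidean $K$-term). Combined with the gradient commutation, this term contributes $O(L)\,e^{-(n-2)t}$ to the Hessian, which is the source of the "$+1$" in the factor $\frac{1}{\sqrt{t}}+1$ of the proposition. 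Dropping the nonnegative term $|\nabla_S\log P_te^{-W}|^2$ at the end (as done in the Euclidean case) then yields the claimed bound.

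The main obstacle is the second-variation term: unlike in $\R^n$, where it was dispatched using $|\nabla^3 V|\le K$, here one has to track the stochastic Jacobi equation for $\nabla^2 X_t^x$ through the Riemann tensor of the sphere and verify that its contribution is genuinely bounded (not merely $O(t)$) so that it can be absorbed into the stated $e^{-(n-2)t}(\tfrac{1}{\sqrt{t}}+1)$ envelope. The constancy of the sectional curvature of $\Sph$ is what makes this computation tractable.
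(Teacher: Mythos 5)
Your treatment of the martingale term is essentially the paper's argument: the same truncation at level $\delta$, the reverse H\"older inequality of Remark \ref{rmk:cdk}, Burkholder--Davis--Gundy, the deviation bound of Lemma \ref{lem_deviation-mart}, the same choice of $\delta$ to kill the $e^{L^2}$ factor, and the same observation that the uniformly bounded quadratic variation on the sphere is what produces the $L^2/\sqrt{n-2}$ improvement. The paper packages exactly this computation as Lemma \ref{lem:martingale_bound_sphere} and applies it to both terms of its Hessian representation.

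The gap is in your second term. First, the decomposition you start from is the Euclidean Elworthy--Li formula transplanted verbatim; on a curved manifold the Hessian representation does not take that form, and the paper instead uses the Cheng--Thalmaier--Wang formula (Lemma \ref{lem:bismut_sphere}), in which the curvature contribution appears explicitly as
$Q_t\int_0^t Q_s^{-1}\Riem_S(\para_s dB_s, Q_s(v))\,Q_s(k(s)v)$.
Second, and more importantly, your claim that the covariant second variation satisfies an almost sure bound $|\nabla^2_{u,u}X_t^x|\lesssim e^{-(n-2)t}$ by a Gr\"onwall argument cannot be right: unlike the Euclidean equation \eqref{eq:Jacobi_derivative_Eucliean}, which is a pathwise ODE with deterministic coefficients, the second variation on a manifold is driven by the stochastic term $\Riem_S(\para_s dB_s,\cdot)$, i.e., by white noise contracted with the curvature. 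It is therefore a genuinely random, martingale-type object with no deterministic pathwise bound; its typical size is governed by its quadratic variation, which after the $e^{-(n-2)s}$ damping is of order a constant (the paper computes it to be at most $1/2$), not something that decays. The correct resolution --- and the point the paper emphasizes --- is that on the constant-curvature sphere this curvature term is \emph{itself a martingale} in $T_x\Sph$ with quadratic variation bounded by $1/2$, so it must be estimated by the same Lemma \ref{lem:martingale_bound_sphere} used for the first term; this is also why the second term contributes $6e^{-(n-2)t}(L+L^2/\sqrt{n-2})$ rather than the $O(L)e^{-(n-2)t}$ you predict. As written, your proposal has no valid argument for the curvature contribution, which you yourself flag as the main obstacle.
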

	Before delving into the proof of Proposition \ref{prop:2nd_derivative_sphere}, let us show how Theorem \ref{thm_sphere} follows.
	\begin{proof}[Proof of Theorem \ref{thm_sphere}]
		Using 
		\begin{align*}
			\int_0^{\infty}e^{-(n-2) t} dt=\frac{1}{n-2}, \quad\quad\int_0^{\infty} \frac{e^{-(n-2) t}}{\sqrt{t}}dt=\sqrt{\frac{\pi}{n-2}},
		\end{align*}
		the combination of Proposition \ref{prop:2nd_derivative_to_Lipschitz} and Proposition \ref{prop:2nd_derivative_sphere} yields 
		that $T$, the Langevin transport map, is Lipschitz with constant $\exp\left(12\left(\frac{L}{\sqrt{n-2}} + \frac{L^2}{n-2}\right)\left(\frac{1}{\sqrt{n-2}} + \sqrt{\pi}\right)\right) \leq \exp\left(35\left(\frac{L}{\sqrt{n-2}} + \frac{L^2}{n-2}\right)\right).$ The result now follows by re-scaling the sphere.
	\end{proof}
	
	\subsection{Stochastic representation of the Hessian}
	The proof of Proposition \ref{prop:2nd_derivative_sphere} will rely on a stochastic representation of the Hessian of the heat semigroup on the sphere, analogous to Lemma \ref{lem:Bismut}. Before introducing the particular form of this representation, we recall a few facts about the curvature of the sphere. Let $g$ be the metric on $\Sph$ induced from $\R^n$. Given vector fields $X,Y,Z$, the Riemannian curvature tensor on the sphere is given by
	\begin{equation} \label{eq:curv_sphere}
		\Riem_S(X,Y)Z=g(Y,Z) X-g( X,Z) Y,
	\end{equation}
	and its trace, the Ricci curvature, is given by
	\[
	\Ric_S(Y,Z)=(n-2)g( Y,Z).
	\]
	
	For $t \geq 0$ and $x \in \Sph$, we use $\para_t: T_x\Sph \to T_{X_t^x}\Sph$ for the (random) \emph{parallel transport} operator between the tangent spaces at $x$ and at $X_t^x$. The role of the Jacobi flow will be played by the operators $Q_t: T_x\Sph \to T_{X_t^x}\Sph$ defined as,
	\begin{equation} \label{eq:Qdef}
		Q_t:= e^{-(n-2)t}\para_t.
	\end{equation}
	An elementary calculation using the above formula for $\Ric_S$ shows that 
	\[
	\partial_tQ_t=-\Ric_S^{\sharp} Q_t; \hspace{3mm} Q_0 = \operatorname{id},
	\]
	which is analogous to \eqref{eq:Jacobi_Euclidean} for the Jacobi flow with a curvature term. Here, $\Ric_S^{\sharp}$ is defined by the canonical isomorphism $g(\Ric_S^{\sharp}(u), v)_x = \Ric_S(u,v)$ for all $ x \in M$, and $u, v \in T_xM.$
	
	\begin{lemma}\textnormal{(\cite[Lemma 2.6]{cheng2021}\footnote{We rescale the generator of the heat equation so our $t$ corresponds to $2t$ in \cite{cheng2021}.})} \label{lem:bismut_sphere} \label{lem:bismutsphere}
		Let $f:\Sph \to \RR$ and $t > 0$. For any continuously differentiable $k:[0,t]\to \RR$  such that $k(0) = 1$ and $k(t) = 0$, we have, for any $x \in \Sph$ and $v,u\in T_x\Sph$,
		\begin{align*}
			\label{hess_sphere} \Hesssph P_tf (v, u) =
			&\ \ - \sqrt{2}\EE\left[g(\gradsph f(X^x_t),Q_t(v)) \int_0^t g(Q_s(\dot{k}(s)u), \para_sdB_s)\right]\\
			&+\sqrt{2}\EE\left[g\left( \gradsph  f(X^x_t), Q_t\int_0^tQ_s^{-1}\Riem_S(\para_sdB_s,Q_s(v))Q_s(k(s)u)\right) \right],
		\end{align*}
		where $B_s$ is a standard Brownian motion on $T_x\Sph$.
	\end{lemma}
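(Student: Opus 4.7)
My plan is to establish the identity via Bismut's integration by parts, the classical strategy for representing second derivatives of a diffusion semigroup in terms of only first derivatives of $f$ together with a stochastic weight. The starting point is the first-order variational formula
$$
\gradsph P_tf(x)\cdot u = \EE[g(\gradsph f(X_t^x),\, Q_t u)],
$$
where $Q_s = e^{-(n-2)s}\para_s$ is the Jacobi flow. On the sphere this follows because, in a parallel-transported frame, the first variation of Brownian motion satisfies the deterministic linear ODE $\partial_s Q_s = -\Ric_S^\sharp Q_s$ (the antisymmetric stochastic contribution dropping out in expectation). Differentiating this identity once more covariantly in direction $v$ and applying the chain rule along the stochastic flow produces two contributions: a term with $\Hesssph f$ applied to $(Q_tv, Q_tu)$, and a term involving the second variation $\widehat D_v(Q_tu)$ of the Jacobi flow with respect to the starting point.

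To eliminate the $\Hesssph f$ contribution I would use Bismut's trick: perturb the driving Brownian motion by the Cameron--Martin direction $\varepsilon\int_0^s \dot k(r)u\, dr$ and differentiate in $\varepsilon$ via Girsanov. This yields, on one hand, a Malliavin-weight term
$$-\sqrt 2\,\EE\Bigl[g(\gradsph f(X_t^x), Q_tv)\int_0^t g(Q_s\dot k(s)u,\para_s dB_s)\Bigr],$$
and on the other hand a variational effect on $X_t$. An integration by parts in $s$ using the boundary conditions $k(0)=1$ and $k(t)=0$ reveals this variational effect as precisely the unwanted $\Hesssph f$ term with the opposite sign, so the two cancel and the first line of the claimed identity emerges.

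The second-variation term $\widehat D_v(Q_tu)$ does not vanish on a curved manifold: the second-order Jacobi equation contains a curvature-driven driver of the form $Q_s^{-1}\Riem_S(\para_s dB_s, Q_sv)Q_s u$. Propagating this equation forward and inserting the same weight $k(s)$ mandated by the above time reparametrization (for consistency between the two places where the $u$-perturbation is distributed in time) yields
$$
\widehat D_v(Q_tu) = \sqrt 2\, Q_t\int_0^t Q_s^{-1}\Riem_S(\para_s dB_s, Q_sv)\, Q_s(k(s)u),
$$
which is precisely the curvature term of the formula.

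The main obstacle is the careful bookkeeping that (i) matches the boundary terms from the integration by parts in $s$ against the $\Hesssph f$ contribution to produce an exact cancellation, and (ii) justifies the commutation of covariant differentiation in the starting point with the stochastic integration along the path, up to the stated curvature correction. The flexibility of the weight $k$ is exactly what allows distributing the $u$-perturbation consistently in both the Malliavin weight and the curvature integrand. For the sphere the simple form $\Ric_S^\sharp=(n-2)\operatorname{id}$ collapses $Q_s$ to $e^{-(n-2)s}\para_s$ and simplifies downstream estimates, but the derivation of the two-term identity is carried out for general Riemannian manifolds in \cite{cheng2021}.
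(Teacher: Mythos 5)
The paper offers no proof of this lemma: it is imported wholesale from \cite[Lemma 2.6]{cheng2021} (with the generator rescaled so that the paper's $t$ is $2t$ there), so there is no internal argument to compare against --- the relevant comparison is with the derivation in that reference. Your sketch does reconstruct the right strategy, and it is essentially the one used there: the damped-parallel-transport representation of $\gradsph P_tf$, Bismut's Cameron--Martin/Girsanov perturbation in the $u$-direction weighted by $k$ to convert the $\Hesssph f(Q_tv,Q_tu)$ contribution into the martingale weight $\int_0^t g(Q_s\dot k(s)u,\para_s dB_s)$, and the second variation of the damped parallel transport as the source of the curvature term. But as written it is a plan rather than a proof, and two points need attention. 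First, your displayed identity for $\widehat D_v(Q_tu)$ cannot hold pathwise as stated: its left-hand side does not depend on $k$ while its right-hand side does. What is true is that the $k$-weighted curvature martingale arises as the variation of the Cameron--Martin-perturbed path, and the exact cancellation of the $\Hesssph f$ term against the boundary terms of the integration by parts in $s$ is precisely the computation you defer; it is also where the asymmetry between $u$ and $v$ in the final formula is decided, so ``careful bookkeeping'' is here the entire content of the proof. Second, on a general weighted manifold the second-variation equation for $Q$ carries additional drift terms built from $\nabla\Ric_V^{\#}$, $d^*\Riem$ and $\Riem(\nabla V)$ --- exactly the quantities collected into $\beta$ in Theorem \ref{thm:manifold} --- and your sketch drops them silently. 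They do vanish here, but only because $V$ is constant and the curvature tensor of $\Sph$ is parallel; making that explicit is what justifies that the sphere formula closes with exactly the two stated terms.
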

	Lemma \ref{lem:bismut_sphere} allows for some flexibility with the choice of the function $k$. For our use, we fix the following specific choice 
	\begin{equation} \label{eq:kchoice}
		k(s) := 1-\frac{s}{t}\quad \forall s\in [0,t].
	\end{equation}

	\subsection{Proof of Proposition \ref{prop:2nd_derivative_sphere}}
	As in the Euclidean setting, we need to bound the first and second terms of Lemma \ref{lem:bismut_sphere} separately. The first term is analogous to the first term of Lemma \ref{lem:Bismut} where we need to bound the martingale $\int_0^t \langle Q_s(\dot{k}(s)w), \para_sdB_s\rangle$. The second term of Lemma \ref{lem:Bismut} is absent in the sphere setting since $\mu$ is the uniform measure. However, due to the curvature of the sphere, we get the second term in Lemma \ref{lem:bismut_sphere}. As we will show, this term \emph{also} involves a martingale. More precisely, the term $$Q_t\int_0^tQ_s^{-1}\Riem_S(\para_sdB_s,Q_s(v))Q_s(k(s)v)$$is a martingale, which is a feature of the constant curvature of the sphere, absent in the general manifold setting.
	In light of this, to bound $\Hesssph P_te^{-W}$, we shall require the following martingale argument, which extends the bounds obtained in the Euclidean setting.
	\begin{lemma} \label{lem:martingale_bound_sphere}
		Let $f:\Sph \to \RR$ be an $L$-log-Lipschitz function and let $(M_t)$ be a martingale on $T_x\Sph$, for some $x \in \Sph$. Assume that almost surely, $M_t$ has bounded quadratic variation, that is
		$$\langle M \rangle_t \leq \varphi(t) \hspace{2mm} a.s.,$$
		for some $\varphi(t) > 0$.
		Then
		$$\EE[|g( \gradsph  f(X^x_t),\para_t M_t )|] \leq \left(6L + 2\frac{L^2}{\sqrt{n-2}}\right)\sqrt{\varphi(t)}\EE[f(X^x_t)].$$
	\end{lemma}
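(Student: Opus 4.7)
The plan is to adapt the splitting argument from the proof of Lemma \ref{lem:martterm} to the spherical setting. Because parallel transport along a path is a linear isometry between tangent spaces, $|\para_t M_t| = |M_t|$. Combining this with the pointwise Cauchy--Schwarz inequality on $g$ and the $L$-log-Lipschitz assumption $|\gradsph f|\le L f$, the task reduces to proving
$$
\EE[f(X_t^x)\,|M_t|] \le \left(6 + \frac{2L}{\sqrt{n-2}}\right)\sqrt{\varphi(t)}\,\EE[f(X_t^x)],
$$
after which multiplying by the leading factor of $L$ recovers the stated inequality.

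For a threshold $\delta>0$ to be chosen, I would split the expectation along $\{|M_t|<\delta\}$ and its complement and apply Cauchy--Schwarz twice, arriving at
$$
\EE[f(X_t^x)|M_t|] \le \delta\,\EE[f(X_t^x)] + \EE[f(X_t^x)^2]^{1/2}\,\EE[|M_t|^4]^{1/4}\,\PP[|M_t|\ge \delta]^{1/4}.
$$
Each of the three factors on the right can then be handled with tools already at hand. First, the second-moment factor is controlled by Lemma \ref{lem:reverseHollder}, which, as pointed out in Remark \ref{rmk:cdk}, applies to the uniform measure on $\Sph$ with curvature-dimension parameter $\kappa = n-2$; this yields
$$
\EE[f(X_t^x)^2]^{1/2} \le \exp\!\Bigl(\tfrac{L^2(1-e^{-2(n-2)t})}{2(n-2)}\Bigr)\EE[f(X_t^x)].
$$
Second, the Burkholder--Davis--Gundy inequality combined with $\langle M\rangle_t\le\varphi(t)$ gives $\EE[|M_t|^4]^{1/4}\le 5\sqrt{\varphi(t)}$, using the explicit bound $C_4\le 360$ as in the Euclidean proof. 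Third, Lemma \ref{lem_deviation-mart} yields $\PP[|M_t|\ge\delta]^{1/4}\le 2^{1/4}\exp(-\delta^2/(8\varphi(t)))$.

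The final step is to optimize $\delta$ so that the tail exponential exactly cancels the reverse-H\"older exponential. Setting
$$
\delta = 2L\sqrt{\tfrac{\varphi(t)(1-e^{-2(n-2)t})}{n-2}} \;\le\; \tfrac{2L\sqrt{\varphi(t)}}{\sqrt{n-2}}
$$
kills both exponential factors simultaneously, and using $5\cdot 2^{1/4}\le 6$ produces the claimed inequality. No single step is genuinely delicate: the two conceptual inputs beyond the Euclidean template are (i) that parallel transport is norm-preserving, which lifts gradient-norm bounds into pairings over the tangent bundle, and (ii) that the sphere satisfies CD$(n-2,\infty)$, so the reverse H\"older inequality transports from the Euclidean proof with $\kappa=n-2$. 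The only place requiring care is the bookkeeping of the exponent cancellation when choosing $\delta$ and verifying that the resulting constants combine to $6L + 2L^2/\sqrt{n-2}$.
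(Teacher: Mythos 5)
Your proof is correct and follows essentially the same route as the paper's: the same threshold splitting at $\delta$, the reverse H\"older inequality via the CD$(n-2,\infty)$ condition, the Burkholder--Davis--Gundy bound with $C_4\le 360$, the martingale tail estimate from Lemma \ref{lem_deviation-mart}, and the same choice of $\delta$ to cancel the $e^{L^2}$ factor. The only cosmetic difference is that you apply the pointwise bound $|\gradsph f|\le Lf$ once at the outset rather than factor by factor at the end, which changes nothing.
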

	\begin{proof}
		Since $\mu$ is the uniform measure on $\Sph$, Lemma \ref{lem:reverseHollder} and Remark \ref{rmk:cdk} yield
		$$\EE[|\gradsph f(X^x_t)|^2] \leq L^2\EE[f(X^x_t)^2] \leq L^2\exp\left(L^2\frac{1 - e^{-2(n-2) t}}{n-2}\right)\EE[f(X^x_t)]^2,$$
		where the first inequality is the $L$-log-Lipschitz condition coupled with the chain rule.
		As in the Euclidean setting, we have, for any $\delta > 0$,
		\begin{equation*}
			\EE[|\gradsph f(X^x_t)|  |M_t|] \leq \delta \EE[|\gradsph f(X^x_t)|] + \EE[|\gradsph f(X^x_t)|^2]^{1/2}\EE[|M_t|^4]^{1/4}\PP[|M_t| \geq \delta]^{1/4},
		\end{equation*}
		as well as the bound $\EE[|M_t|^4]^{1/4} \leq 5\varphi(t)^{1/2}$ which follows from the Burkholder-Davis-Gundy inequality \cite[Proposition 3.26]{KaratzasShreve} and the bound on the quadratic variation. Moreover, using Lemma \ref{lem_deviation-mart}, $\PP[|M_t| \geq \delta] \leq 2\exp\left(-\frac{\delta^2}{2\varphi(t)}\right)$. It follows that
		\begin{align*}
			\EE[|\gradsph f(X^x_t)|  |M_t|] &\leq \delta\EE[|\gradsph f(X^x_t)|]+ \left(5\cdot 2^{1/4} L\exp\left(L^2\frac{1 - e^{-2(n-2) t}}{2(n-2)}\right)\varphi(t)^{1/2}\left(-\frac{\delta^2}{8\varphi(t)}\right)\right)\EE[f(X_t)]\\
			&\le  \left(\delta L+5\cdot 2^{1/4} L\exp\left(L^2\frac{1 - e^{-2(n-2) t}}{2(n-2)}\right)\varphi(t)^{1/2}\exp\left(-\frac{\delta^2}{8\varphi(t)}\right)\right)\EE[f(X_t)],
		\end{align*}
		where the second inequality used $\EE[|\gradsph f(X^x_t)|] \leq L\EE[f(X^x_t)]$ since $f$ is $L$-log-Lipschitz. We choose $\delta$ so that the term $e^{L^2}$ vanishes. In particular, we take $\delta = 2\sqrt{2}L\sqrt{\varphi(t)}\sqrt{\frac{1 - e^{-2(n-2) t}}{2(n-2)}}$, we get
		\begin{align*}\EE[|\gradsph f(X^x_t)|  |M_t|] &\leq \sqrt{\varphi(t)}\left(2\sqrt{2}L^2\sqrt{\frac{1 - e^{-2(n-2)t}}{2(n-2)}} + 5\cdot 2^{1/4}L \right)\EE[f(X^x_t)] \\
			&\leq \left(6L + 2\frac{L^2}{\sqrt{n-2}}\right)\sqrt{\varphi(t)}\EE[f(X^x_t)].
		\end{align*}
	\end{proof}
	We now bound the two terms appearing in Lemma \ref{lem:bismut_sphere} using Lemma \ref{lem:martingale_bound_sphere}. For brevity, let us use $f:=e^{-W}$. We start by analyzing the first term.  
	\begin{lemma}
		\label{lem:second_term_sphere}  
		Fix $x \in \Sph$. For every unit vectors $v,u\in T_x\Sph$ and $t>0$,
		\begin{align*}
			\left|\EE\left[g(\gradsph f(X^x_t),Q_t(v)) \int_0^t g( Q_s(\dot{k}(s)u), \para_sdB_s)\right]\right| \leq 6\frac{e^{-(n-2)t}}{\sqrt{t}}\left(L + \frac{L^2}{\sqrt{n-2}}\right)P_tf(x).
		\end{align*}
	\end{lemma}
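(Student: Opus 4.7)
The plan is to split the expectation into a scalar prefactor controlled by $|\gradsph f(X_t^x)|$ and a real-valued martingale factor, and then apply Lemma~\ref{lem:martingale_bound_sphere} (in its obvious scalar variant). With $k(s)=1-s/t$ from \eqref{eq:kchoice} we have $\dot{k}(s)=-1/t$, and I define
$$
N_t \;:=\; \int_0^t g\bigl(Q_s(\dot{k}(s)u),\para_s dB_s\bigr).
$$
Using the definition \eqref{eq:Qdef} of $Q_s$ together with the fact that $\para_s$ is an isometry, one has $g(Q_s(\dot{k}(s)u),\para_sdB_s)=\dot{k}(s)e^{-(n-2)s}\langle u,dB_s\rangle$, so that $N_t$ is a real-valued martingale with quadratic variation
$$
\langle N\rangle_t \;=\; \int_0^t \frac{e^{-2(n-2)s}}{t^2}\,ds \;\le\; \frac{1}{t} \;=:\; \varphi(t),
$$
where $|u|=1$ was used.

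Next, I bound the prefactor by Cauchy--Schwarz. Since $\para_t$ is an isometry, \eqref{eq:Qdef} gives $|Q_t v|=e^{-(n-2)t}|v|=e^{-(n-2)t}$, and hence
$$
\bigl|g(\gradsph f(X_t^x),Q_t v)\bigr| \;\le\; e^{-(n-2)t}\,|\gradsph f(X_t^x)|.
$$
Taking absolute values inside the expectation and combining with the previous estimate,
$$
\left|\EE\bigl[g(\gradsph f(X_t^x),Q_t v)\,N_t\bigr]\right|
\;\le\; e^{-(n-2)t}\,\EE\bigl[|\gradsph f(X_t^x)|\,|N_t|\bigr].
$$

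Finally, I apply Lemma~\ref{lem:martingale_bound_sphere} with $\varphi(t)=1/t$. The proof of that lemma goes through verbatim for a scalar martingale in place of $\para_t M_t$: it only uses the magnitude $|\para_t M_t|=|M_t|$, the Burkholder--Davis--Gundy bound $\EE[M_t^4]^{1/4}\le 5\varphi(t)^{1/2}$, the Gaussian tail from Lemma~\ref{lem_deviation-mart}, and the reverse Hölder inequality of Lemma~\ref{lem:reverseHollder} (applicable on the sphere via Remark~\ref{rmk:cdk}). This yields
$$
\EE\bigl[|\gradsph f(X_t^x)|\,|N_t|\bigr] \;\le\; \left(6L+\frac{2L^2}{\sqrt{n-2}}\right)\frac{1}{\sqrt{t}}\,P_t f(x),
$$
which is in turn bounded by $6\bigl(L+L^2/\sqrt{n-2}\bigr)/\sqrt{t}\cdot P_tf(x)$. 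Multiplying by $e^{-(n-2)t}$ gives the claim. There is no genuine obstacle here: the only point worth flagging is the scalar adaptation of Lemma~\ref{lem:martingale_bound_sphere}, which is immediate since $N_t$ already plays the role of $|\para_t M_t|$ in that proof.
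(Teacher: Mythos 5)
Your proof is correct and follows essentially the same route as the paper's: the paper packages your scalar martingale $N_t$ as the $T_x\Sph$-valued martingale $M_t = N_t v$ so that Lemma~\ref{lem:martingale_bound_sphere} applies verbatim, obtains the same quadratic-variation bound $\varphi(t)=1/t$, and concludes identically. Your scalar reformulation plus Cauchy--Schwarz on the prefactor is just a cosmetic variant, and your observation that the lemma's proof only uses $|M_t|$ is accurate.
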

	\begin{proof}
		Using \eqref{eq:Qdef} we see that the term $\EE\left[g(\gradsph f(X^x_t),Q_t(v)) \int_0^t g(Q_s(\dot{k}(s)u), \para_sdB_s)\right]$ can be written as $e^{-(n-2)t}\EE[g(\gradsph f(X^x_t),\para_t M_t) ]$ where 
		\[
		M_t=\left( \int_0^t g(Q_s(\dot{k}(s)u), \para_sdB_s)\right)v
		\]
		is a martingale. The  quadratic variation of $M_t$ is equal to, using \eqref{eq:kchoice},
		$$\int_0^te^{-2(n-2)s}\dot{k}(s)^2ds= \frac{1}{t^2}\int_0^te^{-2(n-2)s}=\frac{1}{t^2}\frac{1-e^{-2(n-2)t}}{2(n-2)}\le\frac{1}{t}=:\varphi(t)
		$$
		where the inequality follows from $1-e^{-2(n-2)t}\le 2(n-2)t$.
		The proof is complete by Lemma \ref{lem:martingale_bound_sphere}.
	\end{proof}
	
	We now analyze the second term where we take $v=u$.
	\begin{lemma}
		\label{lem:first_term_sphere}  
		Fix $x \in \Sph$. For every unit vector $v\in T_x\Sph$ and $t>0$,
		\begin{align*} 
			\left|\EE\left[g\left( \gradsph  f(X^x_t), Q_t\int_0^tQ_s^{-1}\Riem_S(\para_sdB_s,Q_s(v))Q_s(k(s)v)\right) \right]\right| \leq 6e^{-(n-2)t}\left(L + \frac{L^2}{\sqrt{n-2}}\right)P_tf(x).
		\end{align*}
	\end{lemma}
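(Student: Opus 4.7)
The plan is to recast the bracketed expression as $e^{-(n-2)t}\para_t M_t$ for a vector-valued martingale $M_t$ on $T_x\Sph$ whose (scalar) quadratic variation is bounded by a constant, so that the desired inequality follows immediately by applying Lemma \ref{lem:martingale_bound_sphere}, exactly as was done for the first term in Lemma \ref{lem:second_term_sphere}.

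The main, and essentially only nontrivial, step is a direct algebraic simplification of the integrand using the explicit curvature tensor of the sphere, $\Riem_S(A,B)C = g(B,C)A - g(A,C)B$, together with the identity $Q_s = e^{-(n-2)s}\para_s$ and the fact that parallel transport is an isometry. Substituting $A = \para_s dB_s$, $B = Q_s(v)$, $C = k(s)Q_s(v)$ and using $|v|=1$, the two $\para_s$ factors produced by $\Riem_S$ cancel against the $\para_s^{-1}$ in $Q_s^{-1} = e^{(n-2)s}\para_s^{-1}$, and the exponential prefactors collapse to a single $e^{-(n-2)s}$, yielding
\[
Q_s^{-1}\Riem_S(\para_s dB_s, Q_s(v))Q_s(k(s)v) \;=\; k(s)\,e^{-(n-2)s}\bigl[dB_s - g(dB_s,v)\,v\bigr].
\]
The right-hand side is the infinitesimal increment of a martingale
\[
M_t \;:=\; \int_0^t k(s)\,e^{-(n-2)s}\bigl[dB_s - g(dB_s,v)\,v\bigr]
\]
in $T_x\Sph$, taking values in the hyperplane orthogonal to $v$.

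Next I would bound the quadratic variation of $M_t$ in an orthonormal basis $\{v,e_2,\ldots,e_{n-1}\}$ of $T_x\Sph$. Only the $n-2$ components orthogonal to $v$ contribute, giving
\[
\langle M\rangle_t \;=\; (n-2)\int_0^t k(s)^2 e^{-2(n-2)s}\,ds \;\leq\; \frac{1-e^{-2(n-2)t}}{2} \;\leq\; \frac{1}{2},
\]
where I used $k(s)\leq 1$. Since $Q_t = e^{-(n-2)t}\para_t$, the quantity to be estimated equals $e^{-(n-2)t}\EE\bigl[g(\gradsph f(X_t^x),\para_t M_t)\bigr]$. Passing to absolute values and invoking Lemma \ref{lem:martingale_bound_sphere} with $\varphi(t)=1/2$ gives a bound of $e^{-(n-2)t}\bigl(6L + 2L^2/\sqrt{n-2}\bigr)\cdot 1/\sqrt{2}\cdot P_tf(x)$, which is at most $6e^{-(n-2)t}\bigl(L+L^2/\sqrt{n-2}\bigr)P_tf(x)$, as desired.

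The main technical obstacle is the first step: carefully using the closed-form curvature tensor to reveal that the curvature contribution, which a priori looks like a Stratonovich-type term coupling $dB_s$ to the Jacobi flow, is in fact the differential of an honest vector-valued martingale on $T_x\Sph$. Once this is done, the remainder of the argument is a verbatim appeal to the martingale deviation lemma already used for the first term. It is worth noting that this cancellation is special to the constant curvature of the sphere; on a general manifold the analogous quantity would not be a pure martingale and would have to be controlled by separate pathwise estimates.
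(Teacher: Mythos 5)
Your proposal is correct and follows essentially the same route as the paper's own proof: the identical algebraic simplification $Q_s^{-1}\Riem_S(\para_s dB_s,Q_s(v))Q_s(k(s)v)=k(s)e^{-(n-2)s}\bigl[dB_s-g(dB_s,v)v\bigr]$, the same quadratic variation bound $(n-2)\int_0^t k(s)^2e^{-2(n-2)s}\,ds\le \tfrac12$, and the same final appeal to Lemma \ref{lem:martingale_bound_sphere}. No gaps.
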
    
	\begin{proof} 
		We use the expressions \eqref{eq:curv_sphere} and \eqref{eq:Qdef} to write
		\begin{align*}
			&Q_s^{-1}\Riem_S(\para_sdB_s,Q_s(v))Q_s(k(s)v)=\left(e^{-(n-2)s}\right)^{-1}\para_s^{-1}\Riem_S(\para_sdB_s,e^{-(n-2)s}\para_s v)e^{-(n-2)s}k(s)\para_s v\\
			&=e^{-(n-2)s}k(s)\para_s^{-1}\Riem_S(\para_sdB_s,\para_s v) \para_sv=e^{-(n-2)s}k(s)\para_s^{-1}\left[g\left( \para_sv,\para_sv\right) \para_s  dB_s-g\left( \para_s dB_s,\para_s v,\right) \para_s v\right]\\
			&=e^{-(n-2)s}k(s)\para_s^{-1}\left[\para_s dB_s-g\left( \para_s dB_s,\para_s v\right)\para_sv\right]=e^{-(n-2)s}k(s)\left[ dB_s-g\left( dB_s,v\right) v\right],
		\end{align*}
		where in the last equation we used that $\para_s$ is a linear isometry. The term $dB_s-g\left( dB_s, v\right) v$ is a Brownian motion in the hyperplane orthogonal to $v$ inside $T_x\Sph$, so it follows that $\int_0^tQ_s^{-1}\Riem_S(//_sdB_s,Q_s(v))Q_s(k(s)v)$ is a martingale in $T_x\Sph$ whose quadratic variation is bounded from above by (using $0\le k(s)\le 1$ for all $s\in [0,t]$),
		$$(n-2)\int_0^t{e^{-2(n-2)s}k(s)^2dr}\leq (n-2)\int_0^t{e^{-2(n-2)s}dr}\leq \frac{n-2}{2(n-2)} =\frac{1}{2},$$
		Therefore, we can view the term 
		\[
		\EE\left[g\left( \gradsph  f(X^x_t), Q_t\int_0^tQ_s^{-1}\Riem_S(\para_sdB_s,Q_s(v))Q_s(k(s)v)\right)\right]
		\]
		as of the form 
		$$e^{-(n-2)t}\EE[g(\gradsph  f(X^x_t),\para_t M_t )],$$ where $M_t$ is a martingale on $T_x\Sph$ with quadratic variation bounded by $\frac{1}{2}$. The proof is complete by Lemma \ref{lem:martingale_bound_sphere}.
	\end{proof}
	
	We can now complete the proof of Proposition \ref{prop:2nd_derivative_sphere}. By Lemma \ref{lem:second_term_sphere} and Lemma \ref{lem:first_term_sphere},
	\begin{align*}
		\nabla_S^2\log P_t e^{-W(x)}(u,u)&=\frac{\nabla_S^2P_t e^{-W(x)}(u,u)}{P_te^{-W(x)}}-g(\nabla_S\log P_t e^{-W(x)},u)^2\le \frac{|\nabla_S^2P_t e^{-W(x)}(u,u)|} {P_te^{-W(x)}} \\
		&\leq 12\left(L + \frac{L^2}{\sqrt{n-2}}\right)e^{-(n-2)t}\left(\frac{1}{\sqrt{t}} + 1\right).
	\end{align*}
	
	\section{General manifolds} \label{sec:genmanifold}
	Let $(M,g,\mu)$ with $\mu = e^{-V}d\operatorname{Vol}$ be a weighted Riemannian manifold, with tangent bundle $TM$. We start by introducing the relevant concepts from Riemannian geometry.
	\paragraph{Notions from Riemannian geometry:}
	\begin{itemize}
		\item If $S$ is a symmetric tensor on $TM$, we denote $S^{\#} : TM \longrightarrow TM$ the operator such that $g(S^{\#}(u), v)_x = S(u,v), ~~ \forall x \in M$, $u, v \in T_xM$ where we suppress the dependence of $S$ on $x$. 
		
		\item $\Riem$ denotes the Riemannian curvature $4$-tensor on $(M,g)$. Its norm is defined as
		$$||\Riem||_\infty:=\sup \left\{\sum_{i=i}^d g( \Riem(x)(e_i, u)v, S^{\#}(e_i)), x \in M, |u|, |v| \leq 1, |S|_{\textnormal{op}} \leq 1\right\}$$
		where $u, v \in T_xM$, $\{e_i\}$ is any basis of $T_xM$,  and $S$ is a symmetric 2-tensor.
		
		\item $\Ric$ denotes the Ricci curvature tensor on $(M,g)$. 
		\item $d^*\Riem$ is defined as
		$$g(d^*\Riem(u,v), w) = g((\nabla_w \Ric^{\#})(u), v) - g((\nabla_v \Ric^{\#})(w), u).$$  
		\item $\Ric_V$ denotes the weighted Ricci curvature tensor of Bakry-\'Emery on $(M, g, \mu)$, that is,
		$$\Ric_V := \Ric + \nabla_M^2 V$$ 
		where $\nabla_M$ and $\nabla_M^2$ are, respectively, the gradient and Hessian on the manifold $M$. 
	\end{itemize}
	
	The manifold $(M,g,\mu)$ satisfies the $\mathrm{CD}(\kappa,\infty)$ \emph{curvature-dimension}, for $\kappa \in \RR$, if
	$$\Ric_V \succeq \kappa g.$$
	With these definitions, we state our main result, which is the precise form of Theorem \ref{thm:manifold_intro}. 
	\begin{theorem}
		\label{thm:manifold}
		Let $(M,d,\mu)$ be a weighted Riemannian manifold with $\mu = e^{-V}d\mathrm{Vol},$ and let $\nu = e^{-W}d\mu$ be another measure on $M$, with $W$ an $L$-Lipschitz function. Assume that $||\Riem ||_\infty < \infty$, $\Ric_V \succeq \kappa g$ with $\kappa > 0$, and that 
		$$\beta := ||\nabla \Ric_V^{\#} + d^*\Riem + \Riem(\nabla V)||_\infty <\infty.$$ Then,
		the Langevin transport map from $\mu$ onto $\nu$ is Lipschitz with constant $\exp\left(L\left(\frac{e^{\frac{L^2}{2\kappa}}}{\sqrt{\kappa}} + e^{\frac{L^2}{2\kappa}}\frac{\|\Riem\|_\infty}{\kappa^{\frac{3}{2}}} + \frac{\beta}{\kappa^2}\right)\right)$. 
	\end{theorem}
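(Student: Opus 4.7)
\textbf{Proof proposal for Theorem \ref{thm:manifold}.} The plan is to mirror the strategy used for the Euclidean space and the sphere: apply Proposition \ref{prop:2nd_derivative_to_Lipschitz} to reduce the theorem to a pointwise upper bound
\[
\nabla_M^2 \log P_t e^{-W}(x) \preceq \theta_t \, g,
\]
where $\theta_t$ is an integrable function, and then integrate $\theta_t$ over $[0,\infty)$. Since $\nabla_M^2 \log P_t e^{-W} = \nabla_M^2 P_t e^{-W}/P_t e^{-W} - \nabla_M \log P_t e^{-W}\otimes \nabla_M \log P_t e^{-W}$, and the rank-one piece is nonpositive, it suffices to bound $\nabla_M^2 P_t e^{-W}/P_t e^{-W}$. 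For this, I would invoke the Bismut-type Hessian formula of Cheng--Thalmaier--Wang \cite{cheng2021} on the weighted manifold, which generalizes Lemma \ref{lem:bismutsphere}. With the Jacobi flow now defined as an operator $Q_t : T_xM \to T_{X^x_t}M$ satisfying $\partial_t Q_t = -\Ric_V^{\#}(Q_t)$ along the Langevin dynamics (and $Q_0 = \operatorname{id}$), the CD$(\kappa,\infty)$ hypothesis forces a pathwise contraction $|Q_t v|\le e^{-\kappa t}|v|$, exactly as in Lemma \ref{lem:1stderv}.

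The Bismut formula on a general weighted manifold decomposes $\nabla_M^2 P_t e^{-W}$ into two types of contributions. The first is a martingale term of the shape $\EE[\langle \nabla_M e^{-W}(X^x_t), \pi_t\, N_t\rangle]$, where $\pi_t$ is parallel transport and $N_t$ is an $L^2$-martingale whose quadratic variation is controlled by $Q_t$ (and therefore by $e^{-\kappa t}$), as in Lemma \ref{lem:second_term_sphere}. The second contribution is a curvature-correction term that is absent only because the sphere has constant curvature; its integrand is built from the precise combination $\nabla\Ric_V^{\#} + d^*\Riem + \Riem(\nabla V)$, whose operator norm is exactly the quantity $\beta$ appearing in the statement, together with a factor of $\|\Riem\|_\infty$ coming from the Jacobi comparison with a non-parallel frame (this is the reason for isolating $\|\Riem\|_\infty$ separately in the conclusion).

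To estimate the martingale contribution I would reuse, essentially verbatim, the truncation argument of Lemma \ref{lem:martingale_bound_sphere}: the CD$(\kappa,\infty)$ condition yields a logarithmic Sobolev inequality for $P_t^*\delta_x$ with constant $\tfrac{1-e^{-2\kappa t}}{2\kappa}$, which, via the reverse Hölder inequality for log-Lipschitz functions (Lemma \ref{lem:reverseHollder} and Remark \ref{rmk:cdk}, both valid under CD$(\kappa,\infty)$), controls $\EE[|\nabla_M e^{-W}(X^x_t)|^2]^{1/2}$ by $L e^{L^2/(2\kappa)} P_te^{-W}(x)$. Combined with the Burkholder--Davis--Gundy bound and Lemma \ref{lem_deviation-mart} applied to the martingale $N_t$, whose quadratic variation is bounded by a constant multiple of $\int_0^t e^{-2\kappa s} ds \lesssim \kappa^{-1}$ (with an additional $\|\Riem\|_\infty$ factor for the second contribution built from the curvature tensor), this produces a bound of the form
\[
\theta_t \;\lesssim\; L e^{-\kappa t} e^{L^2/(2\kappa)} \Bigl(\tfrac{1}{\sqrt{t}} + \tfrac{\|\Riem\|_\infty}{\sqrt{\kappa}}\Bigr) + L\,\beta\, t\, e^{-\kappa t}.
\]
The curvature-correction term grows linearly in $t$ before being damped by the Jacobi contraction, hence the $t$-weighted integral $\int_0^\infty t e^{-\kappa t}dt = \kappa^{-2}$ produces the $\beta/\kappa^2$ factor.

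Finally, integrating the above upper bound using
\[
\int_0^\infty e^{-\kappa t}dt = \tfrac{1}{\kappa},\quad \int_0^\infty \tfrac{e^{-\kappa t}}{\sqrt{t}}dt = \sqrt{\tfrac{\pi}{\kappa}},\quad \int_0^\infty t\, e^{-\kappa t}dt = \tfrac{1}{\kappa^2}
\]
yields exactly the three summands $\tfrac{e^{L^2/(2\kappa)}}{\sqrt{\kappa}}$, $e^{L^2/(2\kappa)}\tfrac{\|\Riem\|_\infty}{\kappa^{3/2}}$, and $\tfrac{\beta}{\kappa^2}$ appearing in the statement, and Proposition \ref{prop:2nd_derivative_to_Lipschitz} delivers the Lipschitz constant. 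The main obstacle will be verifying that the Cheng--Thalmaier--Wang formula really does produce the combination $\nabla \Ric_V^{\#} + d^*\Riem + \Riem(\nabla V)$ in this weighted setting (with the correct sign conventions), and then showing that the accompanying martingales genuinely have quadratic variation controlled by $\|\Riem\|_\infty$ rather than a quantity that degenerates with dimension; once this is done, the rest of the proof follows the Euclidean template closely.
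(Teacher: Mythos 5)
Your overall architecture matches the paper's: reduce to a bound $\nabla_M^2\log P_te^{-W}\preceq\theta_t g$ via Proposition \ref{prop:2nd_derivative_to_Lipschitz}, drop the nonpositive rank-one term, bound the remaining ratio, and integrate. But the paper's proof of Proposition \ref{prop:2nd_derivative_Manifold} does \emph{not} re-run the Bismut-formula/martingale machinery in the general weighted setting. It imports the deterministic Hessian estimate of \cite[Theorem 2.5]{cheng2021} wholesale --- that theorem already delivers $\nabla_M^2P_te^{-W}\preceq e^{-\kappa t}\bigl(\bigl(\tfrac{\sqrt{\kappa}}{\sqrt{e^{2\kappa t}-1}}+\tfrac{\|\Riem\|_\infty}{\sqrt{\kappa}}\bigr)(P_t|\nabla_M e^{-W}|^2)^{1/2}+\tfrac{\beta}{\kappa}P_t|\nabla_M e^{-W}|\bigr)g$, with the combination $\nabla\Ric_V^{\#}+d^*\Riem+\Riem(\nabla V)$ and the Jacobi-flow contraction already built in --- and then only applies the reverse H\"older inequality (Lemma \ref{lem:reverseHollder} and Remark \ref{rmk:cdk}) before integrating. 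The step you flag as ``the main obstacle'' (verifying the curvature combination and the quadratic-variation control for the correction martingale) is precisely the content of the cited theorem; as a self-contained proof your proposal has a hole exactly where the paper has a citation.

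There is also an internal inconsistency in your estimate. The truncation argument of Lemma \ref{lem:martingale_bound_sphere} exists specifically to \emph{cancel} the $e^{L^2/(2\kappa)}$ factor coming from the reverse H\"older inequality (one chooses $\delta$ ``so that the term $e^{L^2}$ vanishes''). If that argument really went through on a general weighted manifold, you would obtain a single-exponential Lipschitz constant, which is \emph{stronger} than the stated theorem and would contradict the paper's own remark that the double exponential is an artifact of the cruder, non-probabilistic estimate of Proposition \ref{prop:2nd_derivative_Manifold}. As written, you invoke the truncation and nevertheless retain $e^{L^2/(2\kappa)}$ in $\theta_t$, so either the truncation is not actually being used (in which case your bound coincides with the paper's Cauchy--Schwarz route and the martingale analysis is superfluous) or it is, and your conclusion should not contain that factor. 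You should decide which regime you are in; the paper is firmly in the first.
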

	On the assumptions, the lower bound on the Ricci curvature tensor is the natural analogue of the uniform convexity assumption in the Euclidean setting, while a bound on $\nabla \Ric^{\#}_V$ is the natural counterpart to the third derivative bound on $V$ in the Euclidean setting. The other terms we have to control are purely geometric and  vanish if we consider a Euclidean space or the uniform measure on the sphere.
	
	As in the previous section, we shall require a bound on the Hessian along the Langevin semigroup. The following result is the analog of Proposition \ref{prop:2nd_derivative_Euclidean} and Proposition \ref{prop:2nd_derivative_sphere} but for general manifolds. The estimate of Proposition \ref{prop:2nd_derivative_Manifold} is not strong enough to yield the sharp results of Theorem \ref{thm:Euclidean} and Theorem \ref{thm_sphere} which necessitate Proposition \ref{prop:2nd_derivative_Euclidean} and Proposition \ref{prop:2nd_derivative_sphere}. It is this estimate which leads to the double-exponential estimate on the Lipschitz constant of the transport maps.
	
	\begin{proposition}
		\label{prop:2nd_derivative_Manifold}
		For every $t\ge 0$ and $x \in M$,
		\begin{align*}
			\nabla_M^2\log P_t e^{-W(x)}\preceq e^{-\kappa t}L\left(\left(\frac{\sqrt{\kappa}}{\sqrt{e^{\kappa t}-1}} + \frac{\|\Riem\|_\infty }{\sqrt{\kappa}}\right)e^{L^2\left(\frac{1-e^{-2\kappa t}}{2\kappa}\right)} + \frac{\beta }{\kappa}\right)g.
		\end{align*}
	\end{proposition}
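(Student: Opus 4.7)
The plan is to follow the same blueprint as in the sphere proof (Proposition \ref{prop:2nd_derivative_sphere}), but starting from the Bismut-type Hessian formula for weighted Riemannian manifolds derived by Cheng--Thalmaier--Wang \cite{cheng2021}. In that formula, the role of the Jacobi flow is played by the damped parallel transport $Q_t : T_xM \to T_{X_t^x}M$ defined by
\[
\partial_t Q_t = -\Ric_V^{\#} Q_t, \qquad Q_0 = \operatorname{id},
\]
which under the $\mathrm{CD}(\kappa,\infty)$ assumption satisfies $|Q_t|_{\mathrm{op}} \leq e^{-\kappa t}$ almost surely by a Gr\"onwall argument (the analog of Lemma \ref{lem:1stderv}). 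With $f = e^{-W}$ and the choice $k(s) = 1 - s/t$ as in \eqref{eq:kchoice}, the formula expresses $\nabla_M^2 P_t f(v,v)$ as a sum of three qualitatively different terms: (i) a martingale-type term $\EE[g(\nabla_M f(X_t^x), Q_t v) \cdot N_t^{(1)}]$ with $N_t^{(1)} = \int_0^t g(Q_s(\dot{k}(s)v), \para_s dB_s)$, analogous to the first term in Lemma \ref{lem:bismut_sphere}; (ii) a curvature-martingale term $\EE[g(\nabla_M f(X_t^x), Q_t\int_0^t Q_s^{-1}\Riem(\para_s dB_s, Q_s v)Q_s(k(s)v))]$, analogous to the second term in Lemma \ref{lem:bismut_sphere}; and (iii) a pure drift term whose integrand is controlled in operator norm by $\beta = \|\nabla \Ric_V^{\#} + d^*\Riem + \Riem(\nabla V)\|_\infty$. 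Term (iii) did not appear in the Euclidean or spherical settings because the corresponding tensor vanishes identically there.

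To bound terms (i) and (ii), I would prove the natural generalization of Lemma \ref{lem:martingale_bound_sphere}: for any martingale $M_t$ on $T_xM$ with $\langle M\rangle_t \leq \varphi(t)$ almost surely,
\[
\EE\!\left[|g(\nabla_M f(X_t^x), \para_t M_t)|\right] \leq C\, L\sqrt{\varphi(t)}\, \exp\!\Bigl(L^2\tfrac{1-e^{-2\kappa t}}{2\kappa}\Bigr) P_t f(x)
\]
for an absolute constant $C$. The proof is identical: split $\EE[|\nabla_M f(X_t^x)||M_t|]$ into $\{|M_t|<\delta\}$ and $\{|M_t|\geq\delta\}$, apply the Burkholder-Davis-Gundy inequality together with $\langle M\rangle_t \leq \varphi(t)$ on the low-probability part, use the deviation bound of Lemma \ref{lem_deviation-mart}, use the reverse H\"older inequality of Lemma \ref{lem:reverseHollder} (valid under $\mathrm{CD}(\kappa,\infty)$ by Remark \ref{rmk:cdk}) to control $\EE[|\nabla_M f(X_t^x)|^2]^{1/2}$, and optimize $\delta$. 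Then one only needs the quadratic-variation bounds: using $|Q_s|_{\mathrm{op}} \leq e^{-\kappa s}$ and $|\para_s|_{\mathrm{op}}=1$,
\[
\langle N^{(1)}\rangle_t \leq \tfrac{1}{t^2}\int_0^t e^{-2\kappa s}\,ds = \tfrac{1}{t^2}\cdot\tfrac{1-e^{-2\kappa t}}{2\kappa},
\]
which produces the $\sqrt{\kappa/(e^{\kappa t}-1)}$ factor after the prefactor $e^{-\kappa t}$ (coming from $|Q_t|_{\mathrm{op}}$) is pulled out; and for the curvature martingale in (ii),
\[
\Bigl\langle \int_0^\cdot Q_s^{-1}\Riem(\para_s dB_s,Q_s v)Q_s(k(s)v)\Bigr\rangle_t \leq \|\Riem\|_\infty^2 \int_0^t e^{-2\kappa s}k(s)^2\,ds \leq \tfrac{\|\Riem\|_\infty^2}{2\kappa},
\]
which yields the $\|\Riem\|_\infty/\sqrt{\kappa}$ factor.

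Term (iii) is handled by the simpler route used in Lemma \ref{lem:2ndterm}. Since its integrand is a deterministic bound of size $\beta$ multiplying $\nabla P_{t-s}f(X_s^x)$ (composed with quantities of operator norm $\leq e^{-\kappa s}$), the gradient commutation
\[
|\nabla_M P_{t-s}e^{-W}(y)| \leq e^{-\kappa(t-s)} P_{t-s}(|\nabla_M e^{-W}|)(y) \leq L\, e^{-\kappa(t-s)} P_{t-s}e^{-W}(y),
\]
(valid under $\mathrm{CD}(\kappa,\infty)$) and the bound $|Q_s|_{\mathrm{op}} \leq e^{-\kappa s}$ reduce this piece to $\beta L\int_0^t e^{-\kappa(t-s)}e^{-\kappa s}\,ds\cdot P_t f(x) \leq \beta L\, e^{-\kappa t}\cdot\tfrac{1}{\kappa}\cdot P_t f(x)$ after a Gr\"onwall/Fubini step, giving the $\beta/\kappa$ summand. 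Finally, dividing $\nabla_M^2 P_t f$ by $P_t f$ and discarding the non-positive $|\nabla_M \log P_t f|^2$ contribution in $\nabla_M^2 \log P_t f$ (as in the final display of Sections \ref{sec:Euclidean} and \ref{sec:sphere}) concludes.

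I expect the main obstacle to be bookkeeping rather than conceptual: correctly extracting the three pieces from the Cheng--Thalmaier--Wang formula and identifying which must absorb the reverse-H\"older factor $\exp(L^2(1-e^{-2\kappa t})/(2\kappa))$ (the two martingale terms) versus which can be handled by the elementary commutation estimate (the $\beta$ drift term). The genuinely new feature compared to the Euclidean and spherical settings is precisely this drift term, whose presence forces $\beta$ into the estimate but, because it has bounded variation rather than martingale character, does not pick up the $e^{L^2}$ factor.
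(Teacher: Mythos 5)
Your strategy diverges from the paper's actual proof, which is far shorter: the paper does not return to a Bismut-type formula at all, but invokes \cite[Theorem 2.5]{cheng2021} as a black box. That theorem already packages the entire stochastic analysis into the pointwise estimate
\[
\nabla_M^2 P_t f \preceq e^{-\kappa t}\left(\left(\frac{\sqrt{\kappa}}{\sqrt{e^{2\kappa t}-1}} + \frac{\|\Riem\|_\infty}{\sqrt{\kappa}}\right)\bigl(P_t|\nabla_M f|^2\bigr)^{1/2} + \frac{\beta}{\kappa}\,P_t|\nabla_M f|\right)g,
\]
after which one only needs the reverse H\"older inequality (Lemma \ref{lem:reverseHollder} with Remark \ref{rmk:cdk}) to convert $(P_t|\nabla_M e^{-W}|^2)^{1/2}$ into $Le^{L^2(1-e^{-2\kappa t})/(2\kappa)}P_te^{-W}$, the trivial bound $P_t|\nabla_M e^{-W}|\le LP_te^{-W}$, and the usual division by $P_te^{-W}$ with the square of the gradient of the logarithm discarded. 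What you propose is essentially to reprove the Cheng--Thalmaier--Wang estimate from scratch; that is legitimate and more self-contained, but it is where all the difficulty lives, and the paper deliberately outsources it.

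If you pursue your route, three concrete points need attention. First, the choice $k(s)=1-s/t$ gives $\langle N^{(1)}\rangle_t\le t^{-2}(1-e^{-2\kappa t})/(2\kappa)$, whose square root behaves like $1/(t\sqrt{2\kappa})$ for large $t$ and is therefore \emph{not} dominated by $\sqrt{\kappa}/\sqrt{e^{2\kappa t}-1}\sim\sqrt{\kappa}\,e^{-\kappa t}$; to obtain the rate in the statement one must take the optimizing $k$ with $\dot k(s)\propto e^{2\kappa s}$, as in \cite{cheng2021}, although your weaker bound still integrates to a Lipschitz constant of the same form up to absolute constants. Second, on a general weighted manifold $Q_t$ solves $\partial_tQ_t=-\Ric_V^{\#}Q_t$ along the random path and is itself random; the clean factorization $Q_t=e^{-\kappa t}\para_t$ used on the sphere fails, so Lemma \ref{lem:martingale_bound_sphere} cannot be applied verbatim and must be restated using only the pathwise bound $|Q_t|_{\mathrm{op}}\le e^{-\kappa t}$ before the Cauchy--Schwarz and deviation steps. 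Third, identifying the drift term and showing it is controlled exactly by $\beta=\|\nabla\Ric_V^{\#}+d^*\Riem+\Riem(\nabla V)\|_\infty$ is not mere bookkeeping: it requires deriving the inhomogeneous equation for the second-order damped transport, which is the substance of the cited theorem. Modulo these points your outline is sound, and it correctly isolates the structural reason why the $e^{L^2}$ factor attaches only to the two martingale terms and not to the $\beta$ term.
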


	\begin{proof}
		By Lemma \ref{lem:reverseHollder} and Remark \ref{rmk:cdk}, if $f$ is positive and $L$-log-Lipschitz then 
		\begin{align}
			\label{eq:reverse_Holder}
			P_t(|\nabla_M f|^2)\le \exp\left(L^2\frac{1-e^{-2\kappa t}}{\kappa}\right)(P_t|\nabla_M f|)^2\le  L^2\exp\left(L^2\frac{1-e^{-2\kappa t}}{\kappa}\right)(P_t f)^2.
		\end{align}
		Hence, by \cite[Theorem 2.5]{cheng2021}\footnote{We rescale the generator of the heat equation so our $t$ corresponds to $2t$ in \cite{cheng2021}.} and \eqref{eq:reverse_Holder}, 
		\begin{align}
			\label{eq:CTW}
			\begin{split}
				\nabla_M^2 P_t e^{-W(x)}&\preceq e^{-\kappa t}\left(\left(\frac{\sqrt{\kappa}}{\sqrt{e^{2\kappa t}-1}} + \frac{\|\Riem\|_\infty }{\sqrt{\kappa}}\right)(P_t|\nabla_M e^{-W}|^2)^{1/2} + \frac{\beta }{\kappa} P_t|\nabla_M e^{-W}|\right)g\\
				&\preceq e^{-\kappa t}\left(\left(\frac{\sqrt{\kappa}}{\sqrt{e^{2\kappa t}-1}} + \frac{\|\Riem\|_\infty }{\sqrt{\kappa}}\right)Le^{L^2\left(\frac{1-e^{-2\kappa t}}{2\kappa}\right)}P_t e^{-W} + \frac{\beta L}{\kappa}P_te^{-W}\right)g\\
			\end{split}
		\end{align}
		so \eqref{eq:CTW} yields
		\begin{align*}
			\nabla_M^2 \log P_t e^{-W(x)}&=\frac{\nabla_M^2 P_t e^{-W(x)}}{P_t e^{-W(x)}}-\left(\nabla_M \log P_t e^{-W(x)}\right)^{\otimes 2}\preceq \frac{\nabla_M^2 P_t e^{-W(x)}}{P_t e^{-W(x)}}\\
			&\preceq e^{-\kappa t}L\left(\left(\frac{\sqrt{\kappa}}{\sqrt{e^{2\kappa t}-1}} + \frac{\|\Riem\|_\infty }{\sqrt{\kappa}}\right)e^{L^2\left(\frac{1-e^{-2\kappa t}}{2\kappa}\right)} + \frac{\beta }{\kappa}\right)g.
		\end{align*}
	\end{proof}
	\begin{proof}[Proof of Theorem \ref{thm:manifold}]
		Proposition \ref{prop:2nd_derivative_Manifold} implies, for every $x \in M$ and unit vector $u\in T_xM$,
		\begin{align*}
			\int\limits_0^\infty \left|\nabla_M^2 \log P_te^{-W(x)}(u,u)\right|dt &\leq \int\limits_0^\infty e^{-\kappa t}L\left(\left(\frac{\sqrt{\kappa}}{\sqrt{e^{2\kappa t}-1}} + \frac{\|\Riem\|_\infty }{\sqrt{\kappa}}\right)e^{\frac{L^2}{2\kappa}} + \frac{\beta }{\kappa}\right)dt\\
			&= L\left(\sqrt{\kappa}e^{\frac{L^2}{2\kappa}}\int\limits_0^\infty \frac{e^{-\kappa t}}{\sqrt{e^{2\kappa t}-1}}dt + \left(e^{\frac{L^2}{2\kappa}}\frac{\|\Riem\|_\infty}{\sqrt{\kappa}} + \frac{\beta}{\kappa}\right)\int\limits_0^\infty e^{-\kappa t} dt\right)\\
			&= L\left(\frac{e^{\frac{L^2}{2\kappa}}}{\sqrt{\kappa}} + e^{\frac{L^2}{2\kappa}}\frac{\|\Riem\|_\infty}{\kappa^{\frac{3}{2}}} + \frac{\beta}{\kappa^2}\right).
		\end{align*}
		The proof is complete by Proposition \ref{prop:2nd_derivative_to_Lipschitz}.
	\end{proof}
	
	\section{The inverse of the Langevin transport map}
	\label{sec:rev_lip}
	In this section, we focus on the inverse of the Langevin transport map. Recall, from \eqref{eq:Langevin_flow}, that $(S_t)_{t\geq0}$ stands for forward maps along the Langevin flow. Thus we have that $S:=\lim\limits_{t\to \infty} S_t = T^{-1}$, when $T$ is the Langevin transport map. We shall prove the following precise form of Theorem \ref{thm:meta_reverse_intro}.
	\begin{theorem}
		\label{thm:meta_reverse}
		$~$
		\begin{enumerate}
			\item Let $\mu$ and $\nu$ be as in Theorem \ref{thm:Euclidean}. Then $S$ is Lipschitz with constant $\exp\left(\frac{21}{2}\frac{L^2}{\kappa}+\frac{5\sqrt{\pi}L}{\sqrt{\kappa}}+\frac{LK}{2\kappa^2}\right)$
			\item Let $\mu$ and $\nu$ be as in Theorem \ref{thm_sphere}. Then $S$ is Lipschitz with constant $
			\exp\left(35\frac{L}{\sqrt{n-2}} + \frac{71}{2}\frac{L^2}{n-2}\right)$.
			\item Let $\mu$ and $\nu$ be as in Theorem \ref{thm:manifold}. Then $S$ is Lipschitz with constant $
			\exp\left(L\left(\frac{e^{\frac{L^2}{2\kappa}}}{\sqrt{\kappa}} + e^{\frac{L^2}{2\kappa}}\frac{\|\Riem\|_\infty}{\kappa^{\frac{3}{2}}} + \frac{\beta}{\kappa^2}\right)+\frac{L^2}{2\kappa}\right)$.
		\end{enumerate}
	\end{theorem}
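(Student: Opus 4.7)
The strategy is to establish a forward-flow counterpart of Proposition \ref{prop:2nd_derivative_to_Lipschitz}: if $\nabla_M^2 \log P_t e^{-W} \succeq -\eta_t g$ for every $t \geq 0$, then $S = \lim_{t \to \infty} S_t$ is Lipschitz with constant $\exp\!\bigl(\int_0^\infty \eta_t\,dt\bigr)$. This is proved by exactly the same geodesic-convexity argument as in Proposition \ref{prop:2nd_derivative_to_Lipschitz}, only with the sign of the Hessian bound reversed: writing the Langevin flow \eqref{eq:Langevin_flow} as $\dot S_t = \nabla_M F_t(S_t)$ with $F_t := -\log P_t e^{-W}$, a bound $\nabla_M^2 F_t \preceq \eta_t g$ yields $\tfrac{d}{dt} d(S_t x, S_t y)^2 \leq 2\eta_t\, d(S_t x, S_t y)^2$, and Gr\"onwall gives the stated Lipschitz constant. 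The task therefore reduces to producing a suitable $\eta_t$ in each of the three settings.

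For that I would decompose
\[
\nabla_M^2 \log P_t e^{-W} = \frac{\nabla_M^2 P_t e^{-W}}{P_t e^{-W}} - (\nabla_M \log P_t e^{-W})^{\otimes 2}
\]
and handle the two terms separately. The first term comes essentially for free: the Hessian estimates in Propositions \ref{prop:2nd_derivative_Euclidean}, \ref{prop:2nd_derivative_sphere}, and \ref{prop:2nd_derivative_Manifold} were all derived by bounding $|\nabla_{u,u}^2 P_t e^{-W}|$ in absolute value (via Cauchy--Schwarz together with Burkholder--Davis--Gundy in the Euclidean and spherical cases, and via the operator-norm bound of Cheng--Thalmaier--Wang in the general setting), so exactly the same $\theta_t$ that served as the upper bound on $\nabla_M^2 P_t e^{-W}/P_t e^{-W}$ in the forward direction also gives the matching lower bound $\nabla_M^2 P_t e^{-W}/P_t e^{-W} \succeq -\theta_t g$.

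The only genuinely new input needed is an upper bound on the rank-one term $(\nabla_M \log P_t e^{-W})^{\otimes 2}$. Under the $\mathrm{Ric}_V \succeq \kappa g$ hypothesis valid in each of the three settings (with $\kappa = n-2$ on the sphere), the standard gradient commutation $|\nabla_M P_t f| \leq e^{-\kappa t} P_t|\nabla_M f|$ combined with the $L$-log-Lipschitz assumption on $W$ gives $|\nabla_M \log P_t e^{-W}| \leq L e^{-\kappa t}$, and hence $(\nabla_M \log P_t e^{-W})^{\otimes 2} \preceq L^2 e^{-2\kappa t}\, g$. Putting the two estimates together,
\[
\nabla_M^2 \log P_t e^{-W} \succeq -\bigl(\theta_t + L^2 e^{-2\kappa t}\bigr) g,
\]
so integration adds an extra $\int_0^\infty L^2 e^{-2\kappa t}\,dt = L^2/(2\kappa)$ to the exponent already computed in the proofs of Theorems \ref{thm:Euclidean}, \ref{thm_sphere}, and \ref{thm:manifold}, recovering the three stated Lipschitz constants. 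I do not foresee any serious obstacle: the only substantive point is confirming the two-sided nature of the previously established Hessian estimates, which is immediate by inspection, and no additional stochastic analysis is required beyond the elementary gradient commutation estimate.
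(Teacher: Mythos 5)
Your proposal is correct and follows essentially the same route as the paper: the paper's Proposition \ref{prop_reverse_lip} is exactly your forward-flow Gr\"onwall lemma, its Proposition \ref{prop:meta_reverse} is exactly your decomposition combining the two-sided Hessian estimates with the gradient commutation bound $|\nabla_M \log P_t e^{-W}| \leq L e^{-\kappa t}$, and the extra $\int_0^\infty L^2 e^{-2\kappa t}\,dt = L^2/(2\kappa)$ in the exponent is precisely how the stated constants are obtained. (Any small mismatch in the coefficient of $L^2/\kappa$ in case 1 traces to the paper's own arithmetic, not to a gap in your argument.)
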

	
	The proof of Theorem \ref{thm:meta_reverse} is analogous to the proofs of the Lipschitz properties of the Langevin transport map $T$ from $\mu$ to $\nu$. With the same argument as in the proof of Proposition \ref{prop:2nd_derivative_to_Lipschitz}, without reversing the map, we get:
	\begin{proposition} \label{prop_reverse_lip}
		If for all $t\ge 0$
		\[
		\nabla_M^2\log P_te^{-W}\succeq  -\ell_t g,
		\]
		then $S$ is $\exp\left(\int_0^{\infty}\ell_tdt\right)$-Lipschitz.
	\end{proposition}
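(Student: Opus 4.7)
The plan is to run the argument of Proposition \ref{prop:2nd_derivative_to_Lipschitz} directly on the forward flow $S_t$, rather than inverting it to produce $T_t$. Since $S_t$ satisfies $\dot{S}_t = -\nabla_M F_t(S_t)$ with $F_t := \log P_t e^{-W}$ (see \eqref{eq:Langevin_flow}), the minus sign in the drift means that a \emph{lower} Hessian bound on $F_t$ will translate into an \emph{upper} bound on the time derivative of squared distances, i.e., an expansion rather than a contraction estimate, which is precisely what we want in order to bound the Lipschitz norm of $S = \lim_{t\to\infty} S_t$.

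Concretely, I would first invoke the Riemannian convexity equivalence already cited in the proof of Proposition \ref{prop:2nd_derivative_to_Lipschitz} (e.g., \cite[Proposition 16.2(iv')]{villani2008optimal}): the hypothesis $\nabla_M^2 F_t \succeq -\ell_t g$ is equivalent to
$$\langle \dot{\alpha}(1), \nabla_M F_t(y)\rangle - \langle \dot{\alpha}(0), \nabla_M F_t(x)\rangle \geq -\ell_t\, d(x,y)^2,$$
for every $x,y \in M$ and every constant-speed geodesic $\alpha:[0,1]\to M$ joining $x$ to $y$. Fixing $x_0, y_0 \in M$, setting $x_t := S_t(x_0)$, $y_t := S_t(y_0)$, and choosing $\alpha^t$ to be a constant-speed geodesic from $x_t$ to $y_t$, the first variation formula for the distance combined with $\dot{x}_t = -\nabla_M F_t(x_t)$, $\dot{y}_t = -\nabla_M F_t(y_t)$ gives
\begin{align*}
\frac{d}{dt} d(x_t, y_t)^2 &= 2\bigl(\langle \dot{y}_t, \dot{\alpha}^t(1)\rangle - \langle \dot{x}_t, \dot{\alpha}^t(0)\rangle\bigr) \\
&= -2\bigl(\langle \nabla_M F_t(y_t), \dot{\alpha}^t(1)\rangle - \langle \nabla_M F_t(x_t), \dot{\alpha}^t(0)\rangle\bigr) \\
&\leq 2\ell_t\, d(x_t, y_t)^2,
\end{align*}
where the final inequality is the equivalence above together with the overall sign flip coming from $\dot{S}_t = -\nabla_M F_t(S_t)$.

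Gr\"onwall's inequality then yields $d(S_t(x_0), S_t(y_0)) \leq \exp\bigl(\int_0^t \ell_s\,ds\bigr)\, d(x_0, y_0)$, so each $S_t$ is $\exp(\int_0^t \ell_s\,ds)$-Lipschitz. Sending $t\to\infty$ and using that the limit $S = \lim_{t\to\infty} S_t$ is well-defined in the settings under consideration (as discussed in the footnote after \eqref{eq:Langevin_flow}), the same bound with $t=\infty$ controls the Lipschitz constant of $S$, completing the proof. The only subtlety is bookkeeping of the sign so that the Hessian \emph{lower} bound on $F_t$ produces the desired distance-expansion inequality; no stochastic or second-variation machinery beyond what appears in Proposition \ref{prop:2nd_derivative_to_Lipschitz} is needed, since those tools enter only later when instantiating $\ell_t$ in the proof of Theorem \ref{thm:meta_reverse}.
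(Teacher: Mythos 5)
Your proof is correct and is essentially the paper's own argument: the paper establishes Proposition \ref{prop_reverse_lip} precisely by running the distance-variation and Gr\"onwall argument of Proposition \ref{prop:2nd_derivative_to_Lipschitz} on the forward flow $S_t$ ``without reversing the map,'' which is exactly what you do, and your sign bookkeeping (lower Hessian bound plus the minus sign in $\dot S_t=-\nabla_M\log P_te^{-W}(S_t)$ yielding an expansion bound) is right.
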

	The next result shows how to combine log-Lipschitz properties together with Hessian estimates to get a lower bound on the Hessian as in Proposition \ref{prop_reverse_lip}.
	\begin{proposition}
		\label{prop:meta_reverse}
		Let $(M,g,\mu)$ be weighted Riemannian manifold with the associated Langevin semigroup $(P_t)$ and let $|\cdot|:=g(\cdot,\cdot)$. Suppose that we have:
		\begin{enumerate}
			\item There exists $\kappa>0$ such that for every test function $f$,
			$$|\nabla_M P_tf| \leq e^{-\kappa t}P_t|\nabla_M f|.$$
			\item There exists $\alpha:[0,\infty)\times[0,\infty) \to \R$ such that for every $L$-log-Lipschitz  nonnegative function $f$, $$\frac{\nabla_M^2P_tf}{P_tf}\succeq -\alpha(t,L)g.$$
		\end{enumerate}
		Then, for every $L$-log-Lipschitz  function $f$,
		$$\nabla_M^2\log P_tf \succeq  -(\alpha(t,L)+L^2e^{-2\kappa t})g.$$
	\end{proposition}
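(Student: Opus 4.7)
The plan is to use the standard Bochner-type decomposition
\[
\nabla_M^2 \log P_t f \;=\; \frac{\nabla_M^2 P_t f}{P_t f} \;-\; (\nabla_M \log P_t f)^{\otimes 2},
\]
and bound the two terms separately. Assumption (2) immediately gives the lower bound $\frac{\nabla_M^2 P_t f}{P_t f} \succeq -\alpha(t,L)\,g$, so the only remaining task is to produce a matching upper bound on the rank-one tensor $(\nabla_M \log P_t f)^{\otimes 2}$, since that tensor enters with a minus sign.

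For the second term, I would combine the gradient-commutation inequality (assumption (1)) with the log-Lipschitz property of $f$. Concretely, since $f$ is positive and $L$-log-Lipschitz, $|\nabla_M f| \le L f$ pointwise, so
\[
|\nabla_M P_t f| \;\le\; e^{-\kappa t}\, P_t |\nabla_M f| \;\le\; L\, e^{-\kappa t}\, P_t f,
\]
and dividing by $P_t f > 0$ gives the pointwise estimate $|\nabla_M \log P_t f| \le L e^{-\kappa t}$. Consequently $(\nabla_M \log P_t f)^{\otimes 2} \preceq L^2 e^{-2\kappa t}\, g$, and plugging both bounds into the decomposition yields
\[
\nabla_M^2 \log P_t f \;\succeq\; -\alpha(t,L)\,g - L^2 e^{-2\kappa t}\,g,
\]
which is the desired inequality.

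There is no real obstacle here: assumption (2) handles the second-order part and assumption (1) plus the log-Lipschitz hypothesis handles the first-order correction. The only thing to be careful about is that the log-Lipschitz bound on $f$ is used to convert $P_t|\nabla_M f|$ into a multiple of $P_t f$, which requires $f > 0$; this is implicit in calling $f$ log-Lipschitz. Once this is noted, the proof is a two-line computation that I would present exactly as above.
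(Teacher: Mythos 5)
Your proof is correct and is essentially identical to the paper's: both use the decomposition $\nabla_M^2\log P_tf=\tfrac{\nabla_M^2P_tf}{P_tf}-(\nabla_M\log P_tf)^{\otimes 2}$, apply assumption (2) to the first term, and control the second via $|\nabla_M P_tf|\le e^{-\kappa t}P_t|\nabla_M f|\le Le^{-\kappa t}P_tf$. No further changes are needed.
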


	\begin{proof}
		Item 1 implies that
		\begin{align*}
			| \nabla_M \log P_t f| &= \frac{|\nabla_M P_tf|}{|P_tf|}\leq  e^{-\kappa t}\frac{P_t|\nabla_M f|}{|P_t f|}= e^{-\kappa t}\frac{P_t|f\nabla_M \log f|}{|P_t f|}\le Le^{-\kappa t}
		\end{align*}
		so, by item 2, for every tangent vector $u$,
		\[
		\nabla_M^2 \log P_t f(u,u)=\frac{\nabla_M^2 P_t f(u,u)}{P_t f}-g(\nabla_M \log P_t f,u)^2\ge (\alpha(t,L)+L^2e^{-2\kappa t}).
		\]
	\end{proof}
	
	We can now complete the proof of Theorem \ref{thm:meta_reverse}. 
	
	\begin{enumerate}
		\item The assumption $\nabla^2 V(x)	\succeq \kappa \mathrm{I}$ together with the classical Bakry-\'Emery gradient estimate \cite[eq. (5.5.4)]{bakry2013analysis} shows that item 1 of Proposition \ref{prop:meta_reverse} holds with $\kappa$. As for item 2, it follows from the proof of Proposition \ref{prop:2nd_derivative_Euclidean} that it holds with
		\[
		\alpha(t,L)=Le^{-\kappa t}\left[5L+\frac{5}{\sqrt{t}}+\frac{Kt}{2}\right].
		\]
		Hence, the condition in Proposition \ref{prop_reverse_lip} is satisfied with 
		\[
		\ell_t =Le^{-\kappa t}\left[5L+\frac{5}{\sqrt{t}}+\frac{Kt}{2}\right]+L^2e^{-2\kappa t},
		\]
		which completes the proof by integration from $t=0$ to $t=\infty$ (see the proof of Theorem \ref{thm:Euclidean}).  
		\item   The sphere $\Sph$ satisfies the curvature-dimension condition $\mathrm{CD}(n-2,\infty)$ so the Bakry-\'Emery gradient estimate \cite[eq. (5.5.4)]{bakry2013analysis} can be applied to show that item 1 of Proposition \ref{prop:meta_reverse} holds with $n-2$. As for item 2, it follows from the proof of Proposition \ref{prop:2nd_derivative_sphere} that it holds with
		\[
		\alpha(t,L)= 45\left(L + \frac{L^2}{\sqrt{n-2}}\right)e^{-(n-2)t}\left(\frac{1}{\sqrt{t}} + 1\right).
		\]
		Hence, the condition in Proposition \ref{prop_reverse_lip} is satisfied with 
		\[
		\ell_t =35\left(L + \frac{L^2}{\sqrt{n-2}}\right)e^{-(n-2)t}\left(\frac{1}{\sqrt{t}} + 1\right)+L^2e^{-2(n-2) t},
		\]
		which completes the proof by integration from $t=0$ to $t=\infty$ (see the proof of Theorem \ref{thm_sphere}).
		
		\item By assumption, $M$ satisfies the curvature-dimension condition $\mathrm{CD}(\kappa,\infty)$ so the Bakry-\'Emery gradient estimate \cite[eq. (5.5.4)]{bakry2013analysis} can be applied to show that item 1 of Proposition \ref{prop:meta_reverse} holds with $\kappa$. As for item 2, it follows from the proof of Proposition \ref{prop:2nd_derivative_Manifold} that it holds with
		\[
		\alpha(t,L)=
		e^{-\kappa t}L\left(\left(\frac{\sqrt{\kappa}}{\sqrt{e^{2\kappa t}-1}} + \frac{\|\Riem\|_\infty }{\sqrt{\kappa}}\right)e^{L^2\left(\frac{1-e^{-2\kappa t}}{2\kappa}\right)} + \frac{\beta }{\kappa}\right).
		\]
		Hence, the condition in Proposition \ref{prop_reverse_lip} is satisfied with 
		\[
		\ell_t =e^{-\kappa t}L\left(\left(\frac{\sqrt{\kappa}}{\sqrt{e^{2\kappa t}-1}} + \frac{\|\Riem\|_\infty }{\sqrt{\kappa}}\right)e^{L^2\left(\frac{1-e^{-2\kappa t}}{2\kappa}\right)} + \frac{\beta }{\kappa}\right)+L^2e^{-2\kappa t},
		\]
		which completes the proof by integration from $t=0$ to $t=\infty$ (see the proof of Theorem \ref{thm:manifold}).
		
	\end{enumerate}
	
	\section{Applications} 
	\label{sect_applications}
	
	As is classical, Lipschitz transport maps can be used to prove functional inequalities, by transferring them from the source measure to the target measure. In this section, we discuss some results on functional inequalities for Lispchitz perturbations that appear to be new. 
	
	\subsection{Dimension-free Gaussian isoperimetric inequalities}
	
	One of the strongest functional inequalities for the Gaussian measure is the Gaussian isoperimetric inequality, which states that among all sets with given Gaussian mass, the minimal Gaussian perimeter is achieved for half-spaces. The general functional inequality is defined as follows: 
	
	\begin{definition}
		Let $\mu$ be a probability measure on a metric space. Given a Borel set $A$, we define its $t$-enlargement as $\{x; d(x,A) \leq t\}$ and its boundary measure (or Minkowski content) as 
		$$\mu^+(A) := \liminf_{t \rightarrow 0} t^{-1}\mu(A^t \backslash A).$$
		
		The probability measure $\mu$ is said to satisfy a Gaussian isoperimetric inequality with constant $\alpha > 0$ if for any Borel set we have
		$$\mu^+(A) \geq \alpha I(\mu(A)),\quad I = \varphi \circ \phi^{-1}$$
		where $\varphi(x) = (2\pi)^{-1/2}\exp(-x^2/2)$ and $\phi(x) = \int_{-\infty}^x{ \varphi(t)dt}$.  
	\end{definition}
	
	The function $I$ corresponds to the perimeter of a half-space with the right Gaussian mass, and the Gaussian measure satisfies this inequality with constant $1$ in all dimensions. This inequality was generalized to uniformly log-concave measures in \cite{BaLe96}. The functional form of the Gaussian isoperimetric inequality is Bobkov's inequality \cite{Bob97}. It is immediate that Gaussian isoperimetric inequalities can be transferred by Lipschitz transport maps (see \cite{cordero2002some}), so we get the following as a corollary of Theorem \ref{thm:Euclidean}: 
	\begin{theorem}
		If $\mu$ is a log-Lipschitz perturbation of a standard Gaussian measure on $\mathbb{R}^d$, then it satisfies a Gaussian isoperimetric inequality with a dimension-free constant. 
	\end{theorem}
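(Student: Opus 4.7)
The plan is to apply Theorem \ref{thm:Euclidean} to the standard Gaussian measure $\gamma$ on $\mathbb{R}^d$ as the source. For $\gamma$ one has $V(x) = |x|^2/2 + \mathrm{const}$, so $\nabla^2 V = \mathrm{I}$ (hence $\kappa = 1$) and $\nabla^3 V \equiv 0$ (hence $K = 0$). The log-Lipschitz assumption on $\mu$ gives $\mu = e^{-W} d\gamma$ with $|\nabla W| \le L$, which is exactly the remaining hypothesis of Theorem \ref{thm:Euclidean}. Therefore there exists a transport map $T \colon \mathbb{R}^d \to \mathbb{R}^d$ pushing $\gamma$ to $\mu$ whose Lipschitz constant $M$ is bounded by $\exp(10[L + L^2])$, a bound that is independent of the dimension $d$.

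With this map in hand, the proof reduces to the classical Lipschitz transfer of isoperimetry \cite{cordero2002some}. Given a Borel set $A \subset \mathbb{R}^d$, set $B := T^{-1}(A)$, so that $\gamma(B) = \mu(A)$. Since $T$ is $M$-Lipschitz, if $y \in B^{t/M}$ then there exists $b \in B$ with $|y-b| \le t/M$, whence $|T(y)-T(b)| \le t$ and $T(b) \in A$; thus $T(B^{t/M}) \subseteq A^t$ and consequently $\mu(A^t) \ge \gamma(B^{t/M})$. Dividing by $t$ and letting $t \downarrow 0$ yields $\mu^+(A) \ge M^{-1} \gamma^+(B)$.

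Finally, the standard Gaussian isoperimetric inequality applied to $B$ gives $\gamma^+(B) \ge I(\gamma(B)) = I(\mu(A))$, so $\mu^+(A) \ge M^{-1} I(\mu(A))$, with $M^{-1} \ge \exp(-10[L+L^2])$ independent of $d$. This establishes the claimed dimension-free Gaussian isoperimetric inequality for $\mu$.

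There is essentially no hard step: the entire argument relies on Theorem \ref{thm:Euclidean} to supply a dimension-free Lipschitz constant, and on a routine transfer lemma. The only points to verify are that $T^{-1}(A)$ is Borel (which follows from the continuity of $T$ arising from the Langevin flow construction) and the elementary set inclusion above; neither poses a substantive obstacle.
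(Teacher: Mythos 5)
Your proposal is correct and follows exactly the paper's route: the paper derives this theorem as an immediate corollary of Theorem \ref{thm:Euclidean} applied with the standard Gaussian as source ($\kappa=1$, $K=0$), combined with the classical transfer of the Gaussian isoperimetric inequality along an $M$-Lipschitz map as in \cite{cordero2002some}. You merely spell out the transfer lemma that the paper cites, so there is nothing to add.
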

	
	The same result holds in the general context of Theorems \ref{thm:Euclidean}, \ref{thm_sphere} and \ref{thm:manifold}. For the sphere, we can transport the sharp isoperimetric inequality on the sphere, which contains a dimensional improvement compared with the Gaussian isoperimetric inequality. 
	
	To our knowledge, this is the first result on dimension-free isoperimetric inequalities for log-Lipschitz perturbations. A result of Miclo (see \cite[Lemma 2.1]{bardet_conv}) states that Lipschitz perturbations of uniformly log-concave measures can be recast as bounded perturbations, allowing to use the Holley-Stroock lemma to deduce functional inequalities. However, the constants obtained in that way are dimension-dependent. Some of the results of \cite[Section 5]{BaKo08} can be used to prove functional inequalities for Lipschitz perturbations of uniformly-convex measures, but the constants depend on exponential moments of the unperturbed measure, and are hence dimensional.
	
	Gaussian isoperimetric inequalities imply other functional inequalities, including in particular logarithmic Sobolev inequalities: 
	
	\begin{definition}
		A probability measure $\mu$ on a manifold is said to satisfy a logarithmic Sobolev inequality if for any locally Lipschitz function $f$ such that $|\nabla f| \in L^2(\mu)$, we have
		$$\int{f^2\log f^2 d\mu} - \left(\int{f^2d\mu}\right)\log\left(\int{f^2d\mu}\right) \leq 2C_{LSI}\int{|\nabla f|^2d\mu}.$$  
	\end{definition}
	
	The standard Gaussian satisfies a logarithmic Sobolev inequality, with sharp constant $C_{LSI} = 1$. This inequality, which is the sharpest possible analogue of Sobolev inequalities for the Gauss space, implies for example dimension-free concentration bounds, and has found many applications in statistics and statistical physics. 
	We refer to \cite{bakry2013analysis} for an overview. 
	
	For logarithmic Sobolev inequalities, Aida and Shigekawa \cite{aida_lsi} showed that such Lipschitz perturbations satisfy them, with a dimension-free constant that has not been made explicit. Gaussian isoperimetric inequalities are strictly stronger than logarithmic Sobolev inequalities. 
	
	As a corollary, we recover functional inequalities for Gaussian mixtures with compactly supported mixing measures, which can be rewritten as Lipschitz perturbations of a Gaussian measure (see the discussion at the end of \cite[Section 2]{bardet_conv}). 
	
	There are some other estimates for which it is difficult to provide direct proofs, but that can easily be extended from one measure to another via Lipschitz transport maps, including general eigenvalue estimates for diffusion generators \cite{milman2018spectral} and sharpened integrability bounds for non-Lipschitz functions
	\cite{IvRu20, CMP20}. 
	
	\subsection{A growth estimate for optimal transport maps}
	
	Our argument does not apply to quadratic optimal transport maps. Nonetheless, it would be natural to expect similar Lipschitz estimates for them. As a hint towards such a result, we remark that the Gaussian concentration inequality implied by our Lipschitz estimate for non-optimal maps, combined with the sub-Gaussian case of \cite[Theorem 1.1]{colombo_bounds}, implies controlled linear growth for optimal transport maps, in the following way: 
	
	\begin{proposition}
		Let $\mu$ be a centered and $L$-log-Lipschitz perturbation of a standard Gaussian measure on $\mathbb{R}^d$. Then the quadratic optimal transport map $\nabla \varphi$  sending $\gamma$ onto $\mu$ satisfies
		$$|\nabla \varphi(x)| \leq C_1 \exp(C_2 \max(L, L^2))\sqrt{d + |x|^2}$$
		where $C_1$ and $C_2$ are universal constants. 
	\end{proposition}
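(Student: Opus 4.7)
The plan is to combine Theorem \ref{thm:Euclidean} with the sub-Gaussian growth estimate of \cite{colombo_bounds}, using the former to certify that $\mu$ enjoys Gaussian concentration and the latter to convert this concentration into a pointwise linear growth bound for the Brenier map.

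First I would instantiate Theorem \ref{thm:Euclidean} with the standard Gaussian $\gamma$ as source (so $V(x)=|x|^2/2$, $\kappa=1$, $K=0$) and with the $L$-log-Lipschitz perturbation $\mu$ as target. This produces a globally Lipschitz transport map $T:\R^d\to\R^d$ with $T_\#\gamma=\mu$ whose Lipschitz constant is at most $M:=\exp(10(L+L^2))\le C_1'\exp(C_2'\max(L,L^2))$ for universal constants $C_1',C_2'$. Because this bound is dimension-free and the map is Lipschitz, $\mu$ inherits the Gaussian concentration of $\gamma$: for every $1$-Lipschitz $f:\R^d\to\R$, the composition $f\circ T$ is $M$-Lipschitz under $\gamma$, so by the classical Gaussian concentration inequality
\[
\mu\bigl(\{|f-\textstyle\int f\,d\mu|\ge r\}\bigr)=\gamma\bigl(\{|f\circ T-\textstyle\int f\circ T\,d\gamma|\ge r\}\bigr)\le 2\exp(-r^2/(2M^2)),
\]
for every $r>0$. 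In particular $\mu$ is sub-Gaussian with variance proxy $M^2$, and its covariance satisfies an operator-norm bound of the same order.

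Next I would plug this information into the sub-Gaussian case of \cite[Theorem 1.1]{colombo_bounds}, which states that whenever the target $\mu$ is centered and sub-Gaussian in the above sense, the Brenier map $\nabla\varphi:\gamma\to\mu$ satisfies a pointwise linear growth estimate of the form $|\nabla\varphi(x)|\le C\sqrt{d+|x|^2}$, where the constant $C$ depends only on the sub-Gaussian parameter of $\mu$. Tracking this dependence (which in \cite{colombo_bounds} is at most polynomial in the sub-Gaussian constant) produces a bound of the form $C\lesssim M$, and hence
\[
|\nabla\varphi(x)|\le C_1\exp(C_2\max(L,L^2))\sqrt{d+|x|^2}
\]
for universal constants $C_1,C_2$, which is exactly the claim. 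The centering hypothesis is used to conclude that $\nabla\varphi(0)$ is controlled by first moments of $\mu$, which are in turn bounded via sub-Gaussianity.

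The main obstacle is not conceptual but bookkeeping: one has to verify that the dependence of the constant $C$ in \cite[Theorem 1.1]{colombo_bounds} on the sub-Gaussian parameter of $\mu$ is at worst linear, so that the exponential factor $\exp(C_2\max(L,L^2))$ inherited from Theorem \ref{thm:Euclidean} is preserved (up to absorbing numerical constants) rather than being squared or exponentiated further. Everything else is a direct chaining of the two cited statements, with no extra analysis required.
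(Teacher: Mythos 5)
Your proposal is correct and follows exactly the route the paper indicates: the paper's own justification is precisely the one-line observation that the Lipschitz bound of Theorem \ref{thm:Euclidean} (with $\gamma$ as source, $\kappa=1$, $K=0$) transfers Gaussian concentration to $\mu$, which is then fed into the sub-Gaussian case of \cite[Theorem 1.1]{colombo_bounds}. Your write-up simply makes explicit the same chaining, including the bookkeeping of how the sub-Gaussian parameter $M=\exp(10(L+L^2))$ enters the final constant.
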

	
	In view of our results, it is natural to expect that Lipschitz estimates hold for optimal transport maps. Even the Gaussian case is open at this time: 
	\begin{conjecture} \label{conj:transport}
		The Brenier map from the standard Gaussian measure onto a log-Lipschitz perturbation of it is globally Lipschitz, and its norm can be bounded independently of the dimension. 
	\end{conjecture}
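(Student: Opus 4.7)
The plan is to extract the desired growth bound for the Brenier map from our non-optimal Lipschitz transport map, by routing through the sub-Gaussian concentration of $\mu$ and then invoking the black-box growth theorem of \cite[Theorem 1.1]{colombo_bounds}. Optimality is never used in the construction of a comparison map; it only enters through the fact that the Brenier map to a well-concentrated target cannot grow too fast.

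Step one is to apply Theorem \ref{thm:Euclidean} to the standard Gaussian source, i.e.\ with $V(x)=|x|^2/2+\tfrac{d}{2}\log(2\pi)$, so that $\kappa=1$ and $K=0$. Since $\mu$ is an $L$-log-Lipschitz perturbation of $\gamma$, the hypothesis on the perturbing potential $W$ is exactly the one required by the theorem, and we obtain a map $T:\mathbb{R}^d\to\mathbb{R}^d$ with $T_\#\gamma=\mu$ and
$$\operatorname{Lip}(T) \,\le\, M \,:=\, \exp\bigl(10(L+L^2)\bigr).$$

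Step two transfers the Gaussian concentration of $\gamma$ to $\mu$ through $T$: for any $1$-Lipschitz function $f:\mathbb{R}^d\to\mathbb{R}$, the composition $f\circ T$ is $M$-Lipschitz on the Gauss space, hence
$$\mu\bigl(|f-\mathbb{E}_\mu f|\ge t\bigr) \,=\, \gamma\bigl(|f\circ T-\mathbb{E}_\gamma(f\circ T)|\ge t\bigr) \,\le\, 2\exp\!\left(-\frac{t^2}{2M^2}\right).$$
Choosing $f(x)=\langle\theta,x\rangle$ for $|\theta|=1$ shows that $\mu$ is sub-Gaussian with proxy $\sigma=M$ in the standard sense, and the centering hypothesis on $\mu$ combined with this concentration yields $\int|x|^2\,d\mu\lesssim dM^2$.

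Step three invokes the sub-Gaussian case of \cite[Theorem 1.1]{colombo_bounds}: for any centered probability measure on $\mathbb{R}^d$ with sub-Gaussian constant $\sigma$, the Brenier map $\nabla\varphi$ transporting $\gamma$ onto it satisfies
$$|\nabla\varphi(x)| \,\le\, C_1\,\sigma\,\sqrt{d+|x|^2}$$
with $C_1$ universal. Substituting $\sigma=M\le\exp(C_2\max(L,L^2))$, for a universal $C_2$ absorbing the $10(L+L^2)\le 20\max(L,L^2)$ bookkeeping, gives exactly the claimed estimate. The only non-mechanical point, and thus the mildest obstacle, is matching our sub-Gaussian normalization from Step two with the exact hypothesis of \cite[Theorem 1.1]{colombo_bounds}; this only affects the universal constants. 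Beyond this, the proof is a clean composition of two existing results, so no serious obstacle is expected.
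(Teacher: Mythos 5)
There is a fundamental gap: what you have proved is not the statement in question. The statement is a \emph{conjecture} --- the paper explicitly records it as open, even in the Gaussian case --- asserting that the Brenier map $\nabla\varphi$ is \emph{globally Lipschitz} with a dimension-free constant, i.e.\ that $\|\nabla^2\varphi(x)\|_{\mathrm{op}}$ is bounded uniformly in $x$ and in the dimension. Your argument instead bounds the pointwise \emph{value} $|\nabla\varphi(x)|$ by $C_1\exp(C_2\max(L,L^2))\sqrt{d+|x|^2}$. That is a first-order growth estimate on the map itself, not a bound on its derivative: a map can satisfy $|\nabla\varphi(x)|\lesssim \sqrt{d+|x|^2}$ while $\nabla^2\varphi$ is unbounded (the growth of a function does not control the oscillation of its gradient). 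Moreover the bound you obtain carries an explicit $\sqrt{d}$, so it is not even dimension-free in the sense the conjecture demands. Nothing in your chain of reasoning --- Theorem \ref{thm:Euclidean} applied to the non-optimal Langevin map, transfer of sub-Gaussian concentration, and the black-box growth theorem of \cite[Theorem 1.1]{colombo_bounds} --- produces any information about $\nabla^2\varphi$, which is what a Lipschitz bound on $\nabla\varphi$ requires; that would amount to a second-order (Monge--Amp\`ere regularity) estimate, and the paper explains why this is currently out of reach even for the linearized equation.

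What you have actually written is, almost verbatim, the paper's \emph{proof of the Proposition immediately preceding the conjecture} (the ``growth estimate for optimal transport maps''), which the authors present precisely as a ``hint towards'' the conjecture rather than a proof of it. So your route is correct for that weaker Proposition, but it does not close the conjecture; the missing ingredient is a genuinely new second-order regularity argument for the Monge--Amp\`ere equation, which the paper identifies as the open barrier.
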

	
	Note that such a statement would already significantly extend the Gaussian sub-case of \cite[Theorem 1.1]{colombo_lipschitz} when the perturbation is smooth. Similar questions can be raised for more general measures, as well as in the Riemannian setting, but the Gaussian case seems like a good starting point. Since the heat flow map and the Brenier transport map coincide in dimension one, this conjecture does hold on $\R$. 
	
	Let us conclude with some comments on what is missing to prove this conjecture. The optimal transport map is the unique weak solution to the Monge-Amp\`ere equation
	$$e^{-V} = e^{-V(\nabla \varphi) - W(\nabla \varphi)}\det\nabla^2 \varphi$$
	among convex functions. This is a fully nonlinear second-order PDE. One can start by investigating the linearized equation. If we replace $W$ by a small perturbation $\varepsilon W - c_\varepsilon$, with $c_\varepsilon$ a constant to enforce unit mass for the target distribution, and look for a solution of the form $\nabla \varphi = x + \varepsilon \nabla h$, the linearized equation as $\varepsilon$ goes to zero is 
	\begin{equation} \label{eq_pois_V}
		\Delta h - \nabla V \cdot \nabla h = W - \int{We^{-V}dx}.
	\end{equation}
	The operator on the left-hand side is precisely the generator of the diffusion process \eqref{eq:SDElangevin}. This linearization highlights the connection between quadratic optimal transport and the heat flow map investigated here: the linearization of both constructions gives rise to the same PDE. 
	
	To prove a regularity estimate for a nonlinear PDE, it is natural to start from a proof for the linearized equation. Regularity for solutions to Poisson equations such as \eqref{eq_pois_V} was investigated in \cite{GDVM19, FSX19a, FSX19b} using the stochastic representation of solutions, Malliavin calculus and Bismut's formula. These are precisely the kind of tools we used in this work. 
	
	Since optimal transport maps do not admit a similar stochastic representation, it would be natural to start by looking for a PDE proof of the regularity estimates of \cite{GDVM19} on \eqref{eq_pois_V}. To our knowledge, this has not been successfully investigated and is at this point a barrier to proving Conjecture \ref{conj:transport}. The problem is of course a variant of many well-studied elliptic regularity problems. The main difficulties are that we work in a non-compact setting, allowing for unbounded solutions, and seek explicit \emph{dimension-free} estimates. This last point in particular has rarely been investigated with PDE methods. 
	
	

	\bibliographystyle{plain}
	\bibliography{manifold_transport.bib}
\end{document}